\tikzstyle{nodal}=[circle,draw,fill=black,inner sep=0pt, minimum width=4pt]
\tikzset{double distance = 2pt}
\newtheorem{theorem}{Theorem}[section]
\newtheorem{conjecture}[theorem]{Conjecture}
\newtheorem{lemma}[theorem]{Lemma}
\newtheorem{corollary}[theorem]{Corollary}
\newtheorem{proposition}[theorem]{Proposition}
\newenvironment{theorem*}[1]
 {\theoremvar}
 {\endtheoremvar}
\newenvironment{corollary*}[1]
 {\corollaryvar}
 {\endcorollaryvar}
\theoremstyle{definition}
\newtheorem{definition}[theorem]{Definition}
\newtheorem{example}[theorem]{Example}
\newtheorem{remark}[theorem]{Remark}
\newcommand{\BC}{{\mathbb {C}}}
\newcommand{\BP}{{\mathbb {P}}}
\newcommand{\BQ}{{\mathbb {Q}}}
\newcommand{\QQ}{{\mathbb {Q}}}
\newcommand{\BZ}{{\mathbb {Z}}}
\newcommand{\ZZ}{{\mathbb{Z}}}
\newcommand{\Z}{{\mathbb{Z}}}
\newcommand{\CO}{{\mathcal {O}}}
\newcommand{\CS}{{\mathcal {S}}}
\newcommand{\RH}{{\mathrm {H}}}
\DeclareMathOperator{\Aut}{Aut}
\DeclareMathOperator{\Gal}{Gal}
\DeclareMathOperator{\GL}{GL}
\DeclareMathOperator{\Pic}{Pic}
\DeclareMathOperator{\Num}{Num}
\DeclareMathOperator{\rk}{rk}
\newcommand{\ov}{\overline}
\newcommand{\piet}{\pi_1^{\mathrm{\acute{e}t}}}
\newcommand{\pitop}{\pi_1^{\mathrm{top}}}
\title{A Hilbert Irreducibility Theorem for Enriques surfaces}
\author{Damián Gvirtz-Chen}
\address{Department of Mathematics, University College London, 25~Gordon Street, London, WC1H  0AY, United Kingdom.}
\email{d.gvirtz@ucl.ac.uk}
\author{Giacomo Mezzedimi}
\address{Institut für Algebraische Geometrie, Leibniz Universität Hannover, Wel\-fen\-gar\-ten~1, 30167 Hannover, Germany.}
\email{mezzedimi@math.uni-hannover.de}
\begin{document}

\begin{abstract}
    We define the over-exceptional lattice of a minimal algebraic surface of Kodaira dimension $0$. Bounding the rank of this object, we  prove that a conjecture by Campana and Corvaja--Zannier holds for Enriques surfaces, as well as K3 surfaces of Picard rank $\geq 6$ apart from a finite list of geometric Picard lattices.
    
    Concretely, we prove that such surfaces over finitely generated fields of characteristic $0$ satisfy the weak Hilbert property after a finite field extension of the base field. The degree of the field extension can be uniformly bounded.
\end{abstract}

\maketitle

\section{The conjecture by Campana and Corvaja--Zannier}
The subject of this article is the potential behaviour of rational points on Enriques and K3 surfaces, as contrasted with the behaviour of rational points on their finite covers. After Serre, one can ask whether these are related to each other through the \emph{Hilbert Property}, which loosely says that many rational points do not come from finite covers (see \cite[Definition~1.1]{CDJLZ} for the particular definition given below).

\begin{definition}
 An integral, quasi-projective variety $X$ over a field $k$ has the \emph{Hilbert Property} if for any finite family of finite covers (i.e.\ finite, surjective morphisms) $\phi_i\colon Y_i\to X$, $i=1,\dots,r$, by integral, normal varieties $Y_i$ over $k$ with $\deg(\phi_i)>1$, the set $X(k)\setminus\bigcup_{i=1}^r \phi_i(Y_i(k))$ is Zariski-dense in $X$.
 
 If $X_{k'}$ has the Hilbert Property for some finite field extension $k'/k$, then $X$ has the \emph{potential Hilbert Property}.
\end{definition}

The definition of the potential Hilbert Property is well-motivated by the fact that the Hilbert Property is stable under finite extensions of the base field, as proved in \cite[Proposition~3.2.1]{Serre}.
Following \cite[Theorem~1.6]{CorvajaZannier}, the Chevalley--Weil theorem implies that the Hilbert Property fails as soon as $X$ admits unramified finite covers of degree $>1$, i.e.\ $X$ is not \emph{algebraically simply connected}. It is therefore reasonable to make an alternative definition that behaves better in this situation.
\begin{definition}[{Corvaja--Zannier \cite[\S2.2]{CorvajaZannier}}]
  A smooth proper variety $X$ over a field $k$ has the \emph{weak Hilbert Property} if for any finite family of \textbf{ramified} finite covers $\phi_i\colon Y_i\to X$, $i=1,\dots,r$, by integral, normal varieties $Y_i$ over $k$ with $\deg(\phi_i)>1$, the set $X(k)\setminus\bigcup_{i=1}^r \phi_i(Y_i(k))$ is Zariski-dense in $X$.
 
 If $X_{k'}$ has the weak Hilbert Property for some finite field extension $k'/k$, then $X$ has the \emph{potential weak Hilbert Property}.
\end{definition}

A guiding philosophy is provided by Campana's notion of specialness: a smooth algebraic variety is \emph{special} if it does not admit a fibration of general type in the sense of \cite[Definition~2.41]{Campana}. It follows from \cite[Proposition~9.29]{Campana} that for curves and surfaces, this is equivalent to there being no finite \'etale cover which dominates a positive-dimensional variety of general type. All algebraic varieties of Kodaira dimension $0$ are special by \cite[Theorem~5.1]{Campana}.

The general expectation is that specialness not only controls the rational points on a variety but also the distribution of rational points on its covers, as expressed by the weak Hilbert Property.

\begin{conjecture}[Campana, Corvaja--Zannier]\label{ccz}
 Let $X$ be a smooth, projective, geometrically integral variety over a finitely generated field $K$ of characteristic $0$ (e.g.\ a number field). Then the following are equivalent:
 \begin{enumerate}
  \item $X$ is special.
  \item $X$ satisfies potential density, i.e.\ there exists a finite field extension $K'/K$ such that $X(K')$ is Zariski-dense in $X$.
  \item $X$ satisfies the potential weak Hilbert Property.
 \end{enumerate}
\end{conjecture}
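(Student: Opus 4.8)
The plan is to establish the equivalence by closing the cycle of implications $(1)\Rightarrow(3)\Rightarrow(2)\Rightarrow(1)$, since any full cycle recovers the three-way equivalence. The implication $(3)\Rightarrow(2)$ is immediate and unconditional: after passing to the extension $K'$ witnessing the potential weak Hilbert Property, the weak Hilbert set associated to any single ramified cover of degree $>1$ (such a cover always exists, e.g.\ a double cover branched along an ample divisor) is both Zariski-dense and contained in $X(K')$, so $X(K')$ is Zariski-dense, which is exactly potential density. All the content therefore lies in the two remaining arrows, and I expect them to draw on inputs of opposite flavour: one negative and Diophantine, the other positive and geometric.

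For $(2)\Rightarrow(1)$ I would argue by contraposition through Campana's structure theory. If $X$ is not special then, by \cite[Definition~2.41]{Campana}, it carries a fibration $f\colon X\to Y$ of general type, i.e.\ the orbifold base $(Y,\Delta_f)$ is of general type; equivalently the core map $c_X$ of \cite{Campana} has positive-dimensional image. The orbifold Lang--Vojta conjecture predicts that (orbifold) rational points on a general-type orbifold are never Zariski-dense after a finite extension, and a descent along $f$ in Campana's orbifold formalism then forces $X(K')$ to fail to be dense for every $K'$, contradicting $(2)$. This shows that $(2)\Rightarrow(1)$ is essentially equivalent to the deepest open Diophantine input available and cannot be expected to be elementary; it is unconditional only modulo Lang--Vojta.

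The decisive implication, and the one still open in full generality, is $(1)\Rightarrow(3)$. Here the strategy is structural: assuming $X$ special, one invokes Campana's conjectural decomposition of special varieties into a tower of orbifold fibrations whose general fibres are either rationally connected or of Kodaira dimension $0$ (the latter being special by \cite[Theorem~5.1]{Campana}). Three ingredients are then needed: (a) the potential weak Hilbert Property for the rationally connected building blocks, where density is already governed by rational curves and one expects to manufacture thin-set-avoiding points directly; (b) the potential weak Hilbert Property for the $\kappa=0$ building blocks --- abelian varieties, via \cite{CorvajaZannier}, and Calabi--Yau-type varieties including K3 and Enriques surfaces, which is precisely the regime the present article treats via the rank bound on the over-exceptional lattice; and (c) a fibration theorem guaranteeing that the weak Hilbert Property descends from the general fibre and the orbifold base of a fibration to the total space, so that the tower of (a) and (b) can be climbed.

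I expect the main obstacle to be ingredient (b) together with the structure theorem underlying $(1)\Rightarrow(3)$. The decomposition of an arbitrary special variety into rationally connected and $\kappa=0$ orbifold pieces is itself conjectural outside low dimensions, and the weak Hilbert Property for general Kodaira-dimension-$0$ varieties is presently known only in isolated cases. The contribution of this paper is to settle one such case --- Enriques surfaces and most K3 surfaces of Picard rank $\geq 6$ --- thereby supplying a new input to step (b); a complete proof of \Cref{ccz} in the stated generality would still require a proof of (orbifold) Lang--Vojta for the arrow $(2)\Rightarrow(1)$, and the full orbifold structure theory together with the remaining building-block and fibration results for $(1)\Rightarrow(3)$.
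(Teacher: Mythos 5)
This statement is \Cref{ccz}, which the paper states as a \emph{conjecture} and does not prove; there is no proof in the paper to compare against. The paper's actual contribution is to verify instances of it: for Enriques surfaces (Theorem A) and for the K3 surfaces of Theorem B --- all of which are special, having Kodaira dimension $0$ --- the potential weak Hilbert Property is established, and potential density was already known by Bogomolov--Tschinkel, so for these classes the equivalence of (1), (2) and (3) holds. Your proposal, to its credit, recognizes exactly this situation: it is not a proof but a conditional roadmap, and no blind attempt could have produced more, since the statement is open.

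Your assessment of the individual arrows is essentially accurate. The implication (3)$\Rightarrow$(2) is indeed immediate, though even your cover construction is unnecessary: the weak Hilbert set is by definition contained in $X(K')$, so its density already gives potential density (if you do want a ramified double cover, note the branch divisor must be $2$-divisible in the Picard group, e.g.\ a smooth member of $|2D|$ with $D$ ample). The arrow (2)$\Rightarrow$(1) is, as you say, contrapositive to an orbifold Lang--Vojta-type statement and is wide open, and (1)$\Rightarrow$(3) rests on a conjectural decomposition of special varieties into rationally connected and $\kappa=0$ orbifold pieces, plus building-block and fibration theorems that do not yet exist; the present paper supplies precisely one new building block for step (b). One misattribution to fix: the weak Hilbert Property for abelian varieties with dense rational points is due to \cite{CDJLZ}, not \cite{CorvajaZannier} (the latter introduced the notion and treated special cases). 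The bottom line is that your text should be read as a correct survey of why \Cref{ccz} is open and how the paper fits into it, not as a proof of the statement.
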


For the remainder of this article, let $K$ thus be a finitely generated field of characteristic $0$, and $\ov K$ an algebraic closure of $K$. Minimal algebraic surfaces over $K$ of Kodaira dimension $0$ occur in two classes.

The first class comprises geometrically abelian surfaces, as well as their bi-elliptic quotients. It has been long known, e.g.\ from \cite[Theorem~10.1]{FreyJarden}, that potential density holds for these surfaces. The recent work \cite{CDJLZ} by Corvaja, Demeio, Javanpeykar, Lombardo and Zannier established the weak Hilbert Property for abelian varieties with Zariski-dense set of rational points, thus settling \Cref{ccz} in this case.

The second class comprises K3 surfaces and Enriques surfaces. The latter arise as étale quotients of K3 surfaces by an involution. While it is still not known whether K3 and Enriques surfaces can exhibit a non-empty, not Zariski-dense set of rational points, potential density of rational points has been proved by Bogomolov and Tschinkel for all Enriques surfaces in \cite{BogomolovTschinkel}, and for K3 surfaces with an elliptic fibration or infinite automorphism group in \cite{BogomolovTschinkel_2}.

\subsection*{Main results}

We settle \Cref{ccz} for all Enriques and many K3 surfaces. 
To our knowledge, these are the first general results on the distribution of rational points on covers of Enriques and K3 surfaces apart from sporadic examples (e.g.\ \cite[Theorem~1.4]{CorvajaZannier}, \cite[\S5]{Demeio_2} and \cite[Propositions~4.3 and 4.4]{Demeio}).

\begin{theorem*}{A}\label{thm:enriques}
 Every Enriques surface $S$ over a finitely generated field $K$ of characteristic $0$ has the potential weak Hilbert Property.
 
More precisely, if $S(K)$ is Zariski-dense in $S$ and $\Pic(S)=\Pic(S_{\ov K})$, then $S$ has the weak Hilbert Property.
\end{theorem*}

Keum proved in \cite{Keum} that every Kummer surface over an algebraically closed field of characteristic $0$ is the \'etale double cover of an Enriques surface. Since the potential weak Hilbert Property is stable under \'etale covers by \cite[Theorem~3.16]{CDJLZ}, one immediately deduces:

\begin{corollary*}{A}\label{cor:kummer}
 Every geometrically Kummer surface $X$ over a finitely generated field $K$ of characteristic $0$ has the potential Hilbert Property.
\end{corollary*}
Curiously, this way of deriving the potential Hilbert Property for Kummer surfaces makes no use of the weak Hilbert Property for abelian surfaces from \cite{CDJLZ}, and in fact $X$ could be taken to be any K3 surface covering an Enriques surface (cf. \Cref{thm:enriques_cover}). We refer the reader to \cite{BSV} and \cite{Sertoz} for characterisations of such $X$. For example, the only singular K3 surfaces (i.e.\ of maximal Picard rank $\rho(X_{\ov K})=\mathrm{rk}\,\mathrm{Pic}(X_{\ov K})=20$) not covering an Enriques surface are those of discriminant $d\equiv -3\bmod 8$, $d=-4,-8$, or $d=-16$ where $X$ is not Kummer.

An immediate question arising from \Cref{cor:kummer} is how far it can be extended to other K3 surfaces. Comparing with the restrictive assumptions in the work of Bogomolov and Tschinkel on potential density, it is not at all surprising that K3 surfaces pose a significantly harder challenge than Enriques surfaces. Nevertheless, a partial answer is given by the following theorem.
\begin{theorem*}{B}\label{thm:K3}
 Let $K$ be a finitely generated field of characteristic $0$.
 
 All K3 surfaces $X$ over $K$ with
 \begin{enumerate}
     \item $\rho(X_{\ov K})<10$ and at least two elliptic fibrations, or
     \item $\rho(X_{\ov K})\geq 6$, except for a finite list $\CS$ of excluded (isomorphism classes of) geometric Picard lattices $\Pic(X_{\ov K})$,
 \end{enumerate}
    have the potential Hilbert Property.
    
    More precisely, if $X(K)$ is Zariski-dense in $X$ and $\Pic(X)=\Pic(X_{\ov K})$, then $X$ has the Hilbert Property.
\end{theorem*}

Notice that since K3 surfaces of Picard rank at least $5$ are geometrically elliptic (see \cite[Proposition~11.1.3(ii)]{Huybrechts}), every K3 surface in \Cref{thm:K3} satisfies potential density of rational points by \cite[Theorem~1.1]{BogomolovTschinkel_2}. In fact, we will see that every K3 surface in \Cref{thm:K3} admits at least one additional elliptic fibration (see \Cref{rk:finitelist}(2) and the proof of \Cref{thm:K3} in Section~\ref{sec:demeio}).

The finite list $\CS$ of excluded geometric Picard lattices is independent of $K$ and not known explicitly (see \Cref{rk:finitelist}). It is likely that a more sophisticated, possibly computer-assisted analysis could narrow down $\CS$.

Regarding \Cref{ccz}, one general challenge is to determine the field extension $K'/K$, over which the (weak) Hilbert Property is acquired. Note that the conjecture does not predict that the extensions over which $X$ acquires potential density or the weak Hilbert Property are the same.\footnote{Though it would follow if the ``more optimistic'' \cite[Question-Conjecture 1]{CorvajaZannier} holds.} Nevertheless, by \cite{CDJLZ} this is true for principal homogeneous spaces of abelian varieties, and we suspect that it is also true for Enriques and K3 surfaces. A related question is whether the degree of $K'$ over $K$ is uniformly bounded by a constant not depending on the surface or base field. For potential density of elliptic K3 surfaces, this has recently been established by Lai and Nakahara in \cite{LaiNakahara}. For the weak Hilbert Property of those surfaces covered by our results, we can build on their work to answer this question too in the affirmative in \Cref{rem:uniform}.

\subsection*{Strategy and outline}
We will finish this introduction by giving an outline of the article and the involved techniques. In Sections~\ref{sec:lattices}, \ref{sec:enriques} and \ref{sec:moduli} we recall and develop facts about abstract lattices, Enriques and K3 surfaces, and root lattices of divisors on K3 surfaces.

The key ingredient in our proof is the \emph{over-exceptional lattice} of a minimal algebraic surface of Kodaira dimension $0$ which we introduce in \Cref{sec:exceptional} as the span of all irreducible curves which are orthogonal to every elliptic fibration inside the Picard lattice. The terminology is explained by an earlier construction of the so-called \emph{exceptional lattice} due to Nikulin in \cite{Nikulin}. While the over-exceptional and exceptional lattice are expected to be of relatively small rank for most surfaces (see \cite{Yu}), so far no general bounds are known in the literature. As a first step in this direction, we prove the following theorem, which we hope will be of independent interest outside the study of the Hilbert Property.

\begin{theorem*}{C}\label{thm:overexceptional_Enriques}
Let $S$ be an Enriques surface over an algebraically closed field of characteristic different from $2$. The over-exceptional lattice $E'(S)$ of $S$ satisfies the following properties:
\begin{enumerate}
    \item $\rk{E'(S)}\le 5$;
    \item The embedding $E'(S)\hookrightarrow \Num(S)$ is primitive;
    \item If $\pi:X\to S$ is the K3 cover of $S$, the embedding $\pi^*E'(S)=E'(S)^{\oplus 2}\hookrightarrow \Pic(X)$ given by the pullback is primitive.
\end{enumerate}
\end{theorem*}

We apply this bound to a lattice-theoretic criterion for the weak Hilbert Property which we derive in Section~\ref{sec:demeio}, building on an idea for establishing the Hilbert Property first used by Corvaja and Zannier \cite[Theorem~1.6]{CorvajaZannier} and later by Demeio \cite{Demeio}. The method involves the construction of a divisor $Z\subset X$ such that $X$ has the Hilbert Property if $X\setminus Z$ is algebraically simply connected and $X(K)$ is Zariski-dense. Combining this with a lattice-theoretic criterion for the simply connectedness of open K3 and Enriques surfaces due to Shimada and Zhang, we are able to derive \Cref{thm:enriques} from the bounds in \Cref{thm:overexceptional_Enriques}.

Motivated by a study of extremal cases and \Cref{thm:overexceptional_Enriques} (see also \Cref{cor:overexceptional_K3}), one may ask whether the following holds.

\begin{conjecture} \label{conj:overexceptional_K3}
 Let $X$ be a K3 surface over an algebraically closed field. Assume that $X$ has infinite automorphism group. Then the embedding of the over-exceptional lattice $E'(X)\hookrightarrow \Pic(X)$ is primitive.
\end{conjecture}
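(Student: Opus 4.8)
The plan is to prove primitivity by showing that $E'(X)$ coincides with its saturation $E'(X)^{\mathrm{sat}}:=(E'(X)\otimes\QQ)\cap\Pic(X)$. Write $\CF\subset\Pic(X)$ for the set of primitive nef isotropic classes, i.e.\ the elliptic fibration classes. I would focus on the main case in which the classes in $\CF$ span a hyperbolic subspace of $\Pic(X)\otimes\QQ$, so that $\CF^\perp\cap\Pic(X)$ is negative definite; this happens as soon as $X$ carries two non-proportional fibrations, which pair positively. The remaining degenerate configurations (no fibration, where $E'(X)=\Pic(X)$ is trivially primitive, or a single fibration pencil, where $\CF^\perp$ is only negative semidefinite) should be treated by a separate direct analysis. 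In the main case $E'(X)\subseteq\CF^\perp$ is an even negative-definite lattice, and so is $E'(X)^{\mathrm{sat}}$; moreover both are rationally spanned by $(-2)$-classes, since $E'(X)$ is by definition generated by $(-2)$-curves.

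The first, elementary, step identifies the roots of the saturation. Let $v\in E'(X)^{\mathrm{sat}}$ with $v^2=-2$. Riemann--Roch on $X$ gives $h^0(v)+h^0(-v)\ge\chi(\CO_X(v))=2+v^2/2=1$, so after replacing $v$ by $-v$ we may assume $v$ is effective, $v=\sum_i a_iC_i$ with $a_i>0$ and $C_i$ irreducible. For every $F\in\CF$ we have $0=v\cdot F=\sum_i a_i(C_i\cdot F)$ with all terms nonnegative (as $F$ is nef), whence $C_i\cdot F=0$ for all $i$ and all $F$. Thus each $C_i$ is an irreducible curve orthogonal to every fibration; lying in the negative-definite lattice $\CF^\perp$, it satisfies $C_i^2=-2$, so it is one of the generating $(-2)$-curves of $E'(X)$, and therefore $v\in E'(X)$. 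Hence every root of $E'(X)^{\mathrm{sat}}$ already lies in $E'(X)$, so the root sublattice of $E'(X)^{\mathrm{sat}}$ is exactly $E'(X)$.

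Consequently the conjecture is equivalent to the purely lattice-theoretic assertion that $E'(X)^{\mathrm{sat}}$ is generated by its roots. This is where the difficulty concentrates and where the hypothesis on $\Aut(X)$ must enter, because the implication ``rationally root-generated $\Rightarrow$ root lattice'' is false for abstract even definite lattices: the index-two overlattice of $A_1^{\oplus 8}$ obtained by adjoining $\tfrac12\sum_{i=1}^{8}e_i$ is even, negative definite, and rationally generated by its $16$ roots, yet strictly larger than its root sublattice $A_1^{\oplus 8}$. An element $w\in E'(X)^{\mathrm{sat}}\setminus E'(X)$ would be precisely such a glue vector, with $w^2\le -4$; for these Riemann--Roch is silent ($\chi(\CO_X(w))\le 0$), and the argument of the previous paragraph shows that neither $w$ nor $-w$ can be effective. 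The crux is therefore to rule out the existence of these non-effective glue vectors.

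To exploit infinitude I would study the primitive embedding $E'(X)^{\mathrm{sat}}\hookrightarrow\Pic(X)$ together with its orthogonal complement $M:=(E'(X)^{\mathrm{sat}})^{\perp}_{\Pic(X)}$, which contains $\langle\CF\rangle_\QQ$. The image $G$ of $\Aut(X)$ in $O(\Pic(X))$ is infinite and preserves $\CF$, the $(-2)$-curves, and hence $E'(X)$, $E'(X)^{\mathrm{sat}}$, $M$, and the glue group between $E'(X)^{\mathrm{sat}}$ and $M$. Since $E'(X)^{\mathrm{sat}}$ is definite, $G$ acts on it through a finite quotient, so the infinite part of $G$ lives on $M$, where it makes the nef cone non--rational-polyhedral and produces an infinite reflection group generated by the $(-2)$-curves meeting the fibrations. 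The plan is then to show, via Nikulin's theory of discriminant forms and the requirement that this infinite reflection group preserve the overlattice $\Pic(X)$, that a nontrivial glue vector $w$ forces a $(-2)$-curve orthogonal to every fibration that certifies $w\in E'(X)$, contradicting $w\notin E'(X)$. Establishing this compatibility, namely that the abundance of fibrations and $(-2)$-curves forced by an infinite automorphism group is incompatible with a nontrivial glue group on the over-exceptional lattice, is the main obstacle, and is presumably the reason the statement remains conjectural.
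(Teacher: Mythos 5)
You were asked to prove \Cref{conj:overexceptional_K3}, which the paper does \emph{not} prove: it is stated as an open conjecture, and the authors explicitly defer it to future research. So there is no proof in the paper to compare against, and your proposal --- as you yourself concede in the final sentence --- does not close the gap either. The part you do prove is correct and matches the partial results the paper actually establishes: your argument that every root of the saturation $E'(X)^{\mathrm{sat}}$ already lies in $E'(X)$ (Riemann--Roch effectivity plus nefness of the fibration classes) is, in this setting, exactly \Cref{prop:primitive} of the paper (proved there by a slightly different decomposition of an effective divisor, separating mobile and fixed parts), so that non-primitivity is equivalent to the existence of a non-effective ``glue'' vector in $E'(X)^{\mathrm{sat}}\setminus E'(X)$. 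Your $\mathbf{A}_1^{\oplus 8}$ example is also the correct obstruction and is exactly the mechanism in the paper's counterexample in \Cref{rk:finitelist}(1), where $E'(X)\cong\mathbf{D}_8^{\oplus 2}$ has saturation concentrated at a copy of $\mathbf{A}_1^{\oplus 8}$ and $\Aut(X)$ is finite --- which is why the hypothesis on the automorphism group appears in the statement.

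The genuine gap is the entire second half. You reduce the conjecture to ``infinite $\Aut(X)$ forbids glue vectors'' and then describe a hoped-for contradiction via Nikulin's discriminant-form theory and the infinite group acting on $M=(E'(X)^{\mathrm{sat}})^{\perp}$, but no actual argument is given, and nothing in the paper supplies one: the paper's primitivity results (\Cref{thm:overexceptional_Enriques}, \Cref{cor:overexceptional_K3}) concern only Enriques surfaces and their K3 covers and are proved by explicit isotropic-sequence geometry, while its general lattice results (\Cref{lem:overnonroot}, \Cref{prop:overlattice_ge8}) only bound ranks. A second, concrete warning: the case of a \emph{unique} elliptic fibration, which you set aside as degenerate, is not degenerate under the hypothesis of the conjecture --- it is precisely the zero-entropy case, where $E'(X)$ contains the fibre class and is parabolic rather than negative definite, so your definite-lattice framing breaks down. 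Worse, in that case a torsion section $P$ of the fibration would itself produce a glue vector: by Shioda--Tate, $P-O$ lies in $E'(X)\otimes\QQ\cap\Pic(X)$ but not in $E'(X)$ (membership in the integral span of vertical curves would force $P=O$ in the Mordell--Weil group). Torsion sections are likewise the source of the glue in the paper's finite-automorphism counterexample. So the conjecture secretly contains the assertion that zero-entropy K3 surfaces have torsion-free Mordell--Weil group, a nontrivial claim that your case division sweeps under the rug. In short: your proposal is a correct reformulation of the conjecture (primitivity is equivalent to the saturation being root-generated, i.e.\ to the absence of glue vectors), plus an unproven programme for the hard part --- which is exactly where the paper, too, leaves the problem.
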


Note that the assumption on the automorphism group is necessary by the counterexample presented in \Cref{rk:finitelist}(1).

Using the ideas from the present article, proving \Cref{conj:overexceptional_K3} would be a crucial step towards the conjecture of Campana and Corvaja--Zannier for K3 surfaces. We hope to revisit this question in future research.

\subsection*{Acknowledgements} We thank Julian Demeio, Matthias Sch\"utt and Dome\-nico Valloni for helpful and stimulating discussions. We also thank the anonymous referees for carefully reading the paper and suggesting several improvements both to content and structure.

\section{Generalities about lattices} \label{sec:lattices}

In this section we recall the basics of lattice theory. Reference works are \cite{Nikulin_3} and \cite{ebeling}.
\begin{definition}
A \emph{lattice} is a finitely generated free $\ZZ$-module $L$ endowed with a non-degenerate symmetric bilinear form with values in $\ZZ$. The \emph{rank} $\rk L$ is the rank of $L$ as a $\ZZ$-module. $L$ is called \emph{even} if $x.x\in 2\ZZ$ for all $x\in L$. The \emph{signature} of $L$ is the signature of the natural extension of the bilinear form to the real vector space $L\otimes \mathbb{R}$. Hence, $L$ is called \emph{positive} (resp.\ \emph{negative}) \emph{definite} if the signature of $L$ is $(\rk L,0)$ (resp.\ $(0,\rk L)$). The \emph{discriminant} $\mathrm{disc}(L)$ of $L$ is the absolute value of the determinant of any matrix representing the bilinear product on $L$.
\end{definition}
\begin{example}
The even lattice $\mathbf{U}$ of rank $2$ with intersection matrix
$$\begin{pmatrix}
0 &1\\
1 &0
\end{pmatrix}$$
is called the \emph{hyperbolic plane}. It has signature $(1,1)$ and it is \emph{unimodular}, that is, $\mathrm{disc}(\mathbf{U})=1$.
\end{example}

We will be mainly interested in even, negative definite lattices for applications to the theory of Enriques and K3 surfaces. Therefore in the following we assume $L$ to be even and negative definite.

To the lattice $L$ we can attach the \emph{dual lattice} 
$$L^\vee = \{x \in L\otimes \QQ \mid x.L\subseteq \ZZ\}.$$
The bilinear product on $L$ naturally extends to $L^\vee$. Moreover $L^\vee$ has the same rank as $L$, so the quotient $A_L = L^\vee /L$ is a finite group. We can endow $A_L$ with the quadratic form $q_L$ with values in $\QQ/2\ZZ$ such that
$$q_L(\overline{v},\overline{v})=v.v \pmod{2\ZZ},$$
where $\overline{v}$ denotes the class of $v\in L^\vee$ in the quotient $A_L$.

\begin{definition}
The finite group $A_L$, endowed with the quadratic form $q_L$, is called the \emph{discriminant group} of $L$.
\end{definition}

Notice that the cardinality of $A_L$ coincides with the discriminant of $L$. The \emph{length} of the group $A_L$, denoted by $\ell(A_L)$, is the minimum number of generators of $A_L$ as an abelian group.

\begin{definition}
An \emph{embedding} of lattices $i:L\hookrightarrow M$ is an injective homomorphism that respects the bilinear pairings. The embedding is called \emph{primitive} if the cokernel $M/i(L)$ is torsion-free. The \emph{saturation} of $L$ in $M$, denoted by $L_{sat}$, is the smallest primitive sublattice of $M$ containing $i(L)$.
\end{definition}

If $L\hookrightarrow M$ is an embedding of lattices, the saturation $L_{sat}$ has the same rank as $L$ and it contains a copy of $L$. It is called an \emph{overlattice} of $L$:

\begin{definition}
An \emph{overlattice} of $L$ is an even lattice $L'$ containing $L$ such that the quotient $L'/L$ is finite. If $L'$ is an overlattice of $L$, we denote by $[L':L]$ the \emph{index} of the overlattice, i.e.\ the index of $L$ as a subgroup of $L'$.
\end{definition}

It is very easy to characterize the overlattices of a given lattice. For instance, if $L'$ is an overlattice of $L$, then the quotient $H=L'/L$ is a subgroup of $A_L$, and moreover the quadratic form $q_L$ is identically zero on $H$. Subgroups of $A_L$ on which $q_L$ is identically zero are called \emph{isotropic}. Conversely, if $H$ is any isotropic subgroup of $A_L$, we can construct the overlattice $L'\subseteq L^\vee$ obtained by adjoining to $L$ the preimages in $L^\vee$ of the generators of $H$. This proves the following:

\begin{proposition}[\cite{Nikulin_3}, Proposition 1.4.1]
There exists a 1:1 correspondence between overlattices of $L$ and isotropic subgroups of $A_L$.
\end{proposition}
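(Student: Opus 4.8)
The plan is to construct two mutually inverse maps: one sending an overlattice to a subgroup of $A_L$, the other sending an isotropic subgroup back to an overlattice. First I would observe that every overlattice $L'$ of $L$ lies between $L$ and $L^\vee$. Indeed, since $L'\supseteq L$ is a lattice its pairing takes integer values, so for any $x\in L'$ and any $y\in L\subseteq L'$ one has $x.y\in\ZZ$; by definition of the dual lattice this means $x\in L^\vee$, whence $L\subseteq L'\subseteq L^\vee$. Thus $H:=L'/L$ is a well-defined subgroup of $A_L=L^\vee/L$. Because $L'$ is even, every $v\in L'$ satisfies $v.v\in 2\ZZ$, so $q_L(\overline v,\overline v)=v.v\bmod 2\ZZ$ vanishes for all $\overline v\in H$; that is, $H$ is isotropic. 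This defines the forward map $L'\mapsto L'/L$.

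For the reverse map, given an isotropic subgroup $H\subseteq A_L$ I would set $L':=\pi^{-1}(H)$, where $\pi\colon L^\vee\to A_L$ is the quotient projection; concretely this adjoins to $L$ the preimages in $L^\vee$ of a generating set of $H$, as indicated in the excerpt. By construction $L\subseteq L'\subseteq L^\vee$ and $L'/L=H$ is finite, so $L'$ has the same rank as $L$ and the extension of the pairing to $L'\otimes\QQ=L\otimes\QQ$ remains non-degenerate. The point requiring proof is that this pairing is integral and even, and this is exactly where isotropy is used. For $x,y\in L'$ the classes $\overline x,\overline y,\overline{x+y}$ all lie in $H$, so $x.x$, $y.y$ and $(x+y).(x+y)$ all lie in $2\ZZ$; the polarization identity
\[
2\,(x.y)=(x+y).(x+y)-x.x-y.y
\]
then forces $x.y\in\ZZ$, and the case $x=y$ gives $x.x\in 2\ZZ$. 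Hence $L'$ is an even lattice, i.e.\ a genuine overlattice of $L$.

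Finally I would check that the two constructions are mutually inverse, which is purely formal: starting from an overlattice $L'$ with $L\subseteq L'\subseteq L^\vee$ one has $\pi^{-1}(L'/L)=L'$, while starting from an isotropic $H$ one has $\pi^{-1}(H)/L=H$. The only genuinely nontrivial ingredient is the passage from isotropy of $q_L$ on $H$ to integrality and evenness of the pairing on $\pi^{-1}(H)$ via the polarization identity; everything else is bookkeeping with the quotient map $\pi$.
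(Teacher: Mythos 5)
Your proof is correct and follows essentially the same route as the paper, which establishes the correspondence by the same two maps: $L'\mapsto L'/L$ (isotropic because $L'$ is even) and, conversely, adjoining to $L$ the preimages in $L^\vee$ of an isotropic subgroup. The only difference is that you spell out, via the polarization identity, the verification that the preimage $\pi^{-1}(H)$ is an integral even lattice --- a detail the paper asserts without proof --- so your write-up is a fleshed-out version of the same argument.
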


If $L'$ is an overlattice of $L$, we can compute the discriminant of $L'$ by using the formula \cite[\S1.4]{Nikulin_3}
\begin{equation} \label{eq:over}
    \frac{\mathrm{disc}(L)}{\mathrm{disc}(L')}=[L':L]^2.
\end{equation}
We conclude the section by introducing the root lattices, the ones we will be mainly interested in.

\begin{definition}
A \emph{root lattice} is an even, negative definite lattice $L$ that admits a set of generators given by vectors of norm $-2$.
\end{definition}

Vectors of norm $-2$ are usually called \emph{roots}. Any root lattice can be decomposed as the direct sum of some ADE lattices \cite[Theorem 1.2]{ebeling}. These are the negative definite lattices $\mathbf{A}_n$ (for $n\ge 1$), $\mathbf{D}_n$ (for $n\ge 4$) and $\mathbf{E}_n$ (for $6 \le n\le 8$) corresponding to simply laced Dynkin diagrams.

We list the discriminant groups of ADE lattices in the following table for convenience:

\begin{table}[H]
  \renewcommand*{\arraystretch}{1.4}
   \centering
\begin{tabular}{ c|c|c } 
$L$ & $A_L$ & $q_L$\\[0.5em]
 \hline\hline
$\mathbf{A}_n$ &$\Z/(n+1)$ &$-\frac{n}{n+1}$\\
\hline
$\mathbf{D}_{2n}$ &$\Z/2 \times \Z/2$ &$\begin{pmatrix} 1 &\frac{1}{2}\\ \frac{1}{2} &-\frac{n}{2} \end{pmatrix}$\\
\hline
$\mathbf{D}_{2n+1}$ &$\Z/4$ &$-\frac{2n+1}{4}$\\
\hline
$\mathbf{E}_6$  &$\Z/3$ &$\frac{2}{3}$\\
\hline
$\mathbf{E}_7$  &$\Z/2$ &$\frac{1}{2}$\\
\hline
$\mathbf{E}_8$  &$\{1\}$ &$0$\\

\end{tabular}
\caption{Discriminant groups of ADE lattices}
\label{ADE}
\end{table}

\section{Generalities about K3 and Enriques surfaces}\label{sec:enriques}
This section recalls useful facts about K3, Kummer and Enriques surfaces. Reference works are \cite{BarthHulek, EnriquesI, EnriquesII}. Throughout the section, $k$ is a field of characteristic different from $2$ and $\ov k$ an algebraic closure of $k$. 

\begin{definition}
A \emph{K3 surface} is a smooth projective algebraic surface $X$ over $k$ such that its canonical bundle $K_X$ is trivial and $\RH^1(X,\CO_X)=0$.
 
An \emph{Enriques surface} is a smooth projective algebraic surface $S$ over $k$ such that $2K_S=0$ and $\RH^1(S,\CO_S)=\RH^2(S,\CO_S)=0$.
\end{definition}

We note that by flat base change a smooth projective algebraic surface over $k$ is a K3 or Enriques surface over $k$ if and only if it so over $\ov k$. In the following, $X$ will be a K3 surface over $k$ and $S$ an Enriques surface over $k$. We will write $X_{\ov k}$ and $S_{\ov k}$ for their base change to the algebraic closure $\ov k$.

The fundamental group $\piet(X_{\ov k})$ is trivial, while the fundamental group of $S_{\ov k}$ is isomorphic to $\BZ/2$. More precisely, there exists a K3 surface $Y$ over $k$ and a fixed-point free involution $\iota$ on $Y$, called an \emph{Enriques involution}, such that $S$ is the quotient of $Y$ by $\iota$. 

The Picard lattice $\Pic(X)$ of $X$ is isomorphic to the group of divisors up to numeric equivalence $\Num(X)$ and has signature $(1,\rho(X)-1)$ where $1\leq\rho(X)\leq 22$, and $\rho(X)\leq 20$ in characteristic $0$.

The Picard lattice $\Pic(S)$ of $S$ has non-trivial torsion coming from $K_S$. Over $\ov k$, its torsion-free part $\Num(S_{\ov k})$ is isomorphic to $\mathbf{U}\oplus \mathbf{E}_8$.

By Riemann-Roch a divisor $D$ representing an integral curve on $X$ or $S$ satisfies $D^2\geq -2$, and $D^2=-2$ if and only if $D$ is a smooth curve of genus $0$, called a \emph{$(-2)$-curve}. 

In the rest of the section we will recall some well-known facts about elliptic fibrations on Enriques and K3 surfaces, as well as some important results about \emph{isotropic sequences}, which will constitute the main technical tool of the paper.

\begin{definition}
 An \emph{elliptic fibration} is a faithfully flat morphism of geometrically integral, smooth algebraic varieties whose generic fibre is a smooth, proper, geometrically integral curve of genus $1$. (In particular, a section is not required.)
\end{definition}

Any elliptic fibration on an Enriques or K3 surface has to be fibred over the projective line.

If $E$ is a primitive, effective divisor on a K3 surface $X$ with $E^2=0$ and its dual graph is one of the extended simply laced Dynkin diagrams with the correct multiplicities (see e.g. \cite[Table~I.4.3]{Miranda}), then $|E|$ yields an elliptic fibration on $X$ (see \cite[Theorem~2.3.10]{Huybrechts}, noticing that such divisors are nef, since they intersect nonnegatively every irreducible curve on $X$). 

If $F$ is an effective divisor on an Enriques surface $S$ with $F^2=0$ and its dual graph is one of the extended simply laced Dynkin diagrams with the correct multiplicities, then either $|F|$ or $|2F|$ yields an elliptic fibration $\pi:S\to \BP^1_k$, depending on whether $F$ is primitive in $\Num(S)$ or not (see \cite[Lemma~(17.3)]{BarthHulek}). There are exactly two multiple fibres of $\pi$ which are of Kodaira types $I_m$ and $I_n$, $m,n\geq 0$, and have multiplicity $2$. The fibres of Kodaira type $I_n$ are called \emph{multiplicative}, while the other ones are \emph{additive}.

Every elliptic fibration on the Enriques surface $S$ arises as a pencil $|2F|$ with the aforementioned properties. The curve $F$ is called a \emph{half-fibre}, while the non-multiple fibres in $|2F|$ are called \emph{simple} fibres. In particular additive fibres are always simple.

We have the following bound on the number of vertical components in reducible fibres of $|2F|$:

\begin{proposition} \label{prop:embedE8}
Let $S$ be an Enriques surface and $|2F|$ an elliptic fibration on $S$. The number of components contained in $s$ fibres of $|2F|$ is at most $8+s$.
\end{proposition}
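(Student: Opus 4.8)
The plan is to exploit the fact that, over $\ov k$, one has $\Num(S_{\ov k})\cong \mathbf{U}\oplus \mathbf{E}_8$, a lattice of signature $(1,9)$; since the number of geometric components only grows under base change, it suffices to prove the bound after replacing $k$ by $\ov k$, so I assume $k=\ov k$. All components of fibres of $|2F|$ are \emph{vertical}: every fibre is numerically a multiple of the half-fibre $F$, so each such component $C$ satisfies $C.F=0$ and hence lies in $F^\perp$. As $F$ is a primitive isotropic vector we have $F\in F^\perp$, and I would pass to the quotient $W:=F^\perp/\ZZ F$. A signature count shows that $W$ is negative definite of rank $8$ (in fact $W\cong \mathbf{E}_8$). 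Writing $\ov C$ for the image in $W$ of a vertical curve $C$, the relation $C.F=0$ gives $\ov C.\ov C=C.C$, so a $(-2)$-curve maps to a root of $W$.

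Next I would analyse one fibre at a time. Fix one of the chosen fibres and let $C_1,\dots,C_m$ be its irreducible components. By the Kodaira classification their dual graph is an extended (affine) Dynkin diagram, so the intersection matrix of $C_1,\dots,C_m$ is the negative of an affine Cartan matrix: it is negative semidefinite with one-dimensional radical spanned by the fibre class $\sum_a n_a C_a$. Since this class is numerically proportional to $F$, it dies in $W$; therefore the images $\ov C_1,\dots,\ov C_m$ span a negative definite root sublattice $R\subseteq W$ of rank exactly $m-1$ (the finite ADE lattice obtained by deleting one node of the affine diagram). This applies verbatim to the multiple fibres, using that the reduced half-fibre is again an affine Dynkin configuration whose weighted component sum is numerically proportional to $F$.

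Finally I would combine the contributions of the $s$ fibres. Components lying in distinct fibres are disjoint, so they pair to $0$ in $\Num(S)$ and hence their images pair to $0$ in $W$; thus the root lattices $R_1,\dots,R_s$ attached to the $s$ fibres are pairwise orthogonal in $W$. As $W$ is negative definite, pairwise orthogonal sublattices are linearly independent, whence
\[
 \sum_{i=1}^s (m_i-1)=\sum_{i=1}^s \rk R_i \le \rk W = 8,
\]
which rearranges to $\sum_{i=1}^s m_i\le 8+s$, the desired bound. The argument is short, and the only points needing genuine care are the bookkeeping for multiple fibres and the verification that passing to $W=F^\perp/\ZZ F$ both preserves the norms of vertical $(-2)$-curves and collapses each fibre class to zero; these are precisely the steps where the primitivity and isotropy of the half-fibre $F$, together with the even unimodularity of $\Num(S_{\ov k})$, enter.
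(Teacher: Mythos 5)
Your proof is correct, but it follows a genuinely different route from the paper's. The paper reduces to the Jacobian fibration $p\colon T\to\BP^1$ of $|2F|$, quotes from the Enriques literature that $T$ is an elliptic surface with the same Picard number and the same singular fibre types as $|2F|$, and then cites the Shioda--Tate formula for $T$ (which, unlike $|2F|$ itself, has a section). You instead run the lattice-theoretic core of that argument directly on $S$: all fibre components lie in $F^\perp\subset\Num(S_{\ov k})\cong\mathbf{U}\oplus\mathbf{E}_8$; the quotient $W=F^\perp/\ZZ F$ is negative definite of rank $8$ because the half-fibre $F$ is primitive isotropic; each fibre with $m_i$ components contributes a sublattice of rank exactly $m_i-1$ (its Gram matrix is an affine Cartan matrix of corank one, and in a definite lattice the rank of a span equals the rank of its Gram matrix); and the contributions of distinct fibres are orthogonal, hence independent, giving $\sum_i(m_i-1)\le\rk W=8$. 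In effect you re-prove the relevant inequality of Shioda--Tate in a form that applies to a fibration without a section; this buys self-containedness (no Jacobian, no comparison theorems on Picard numbers and fibre types) and makes transparent that the constant $8$ is exactly $\rk(F^\perp/\ZZ F)$, at the cost of being longer than the paper's citation-based proof. One step you gloss over: the reduction to $\ov k$ is not quite ``components only grow under base change,'' since a single fibre over a closed point of $\BP^1_k$ may split into several geometric fibres, which would a priori weaken the bound from $8+s$ to $8+s'$ with $s'\ge s$. The repair is easy: Galois permutes the geometric fibres over a closed point transitively, so every $k$-component over that point meets any fixed geometric fibre there, and one applies the geometric bound to one chosen geometric fibre per closed point. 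Since the paper only ever uses the proposition over an algebraically closed field, this is a cosmetic point rather than a gap.
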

\begin{proof}
We pass to the Jacobian fibration of $|2F|$, which is an elliptic fibration $p:T\rightarrow \BP^1$ with a section. According to \cite[Corollary~4.3.18, Theorem~4.3.20]{EnriquesI}, $T$ is an elliptic surface of the same Picard rank and the types of singular fibres of $p$ coincide with those of $|2F|$. Therefore it suffices to prove the statement for the elliptic surface $p:T\rightarrow \BP^1$, and this follows from the Shioda--Tate formula \cite[Corollary~6.13]{SchuettShioda}.
\end{proof}

\begin{definition}
An \emph{(isotropic) $n$-sequence} is a list of $n$ isotropic (i.e.\ of square $0$) divisors
$$
\big(F_1,F_1 + R_{1,1},\hdots,F_1 + \sum_{j=1}^{r_1} R_{1,j},F_2,\ldots,F_2 + \sum_{j = 1}^{r_2}R_{2,j},\hdots, F_c, \ldots, F_{c} + \sum_{j=1}^{r_c} R_{c,j}\big),
$$
where the \(F_i\) are half-fibres on \(S\) and the \(R_{i,j}\) are \((-2)\)-curves satisfying the following conditions:
\begin{enumerate}
    \item \(F_i.F_j = 1 - \delta_{ij}\).
    \item \(R_{i,j}.R_{i,j+1} = 1\).
    \item \(R_{i,j}.R_{k,l} = 0\) unless \((k,l) = (i,j)\) or \((k,l)=(i,j\pm 1)\).
    \item \(F_i.R_{i,1} = 1\) and \(F_i.R_{k,l} = 0\) if \((k,l) \neq (i,1)\).
\end{enumerate}
We refer to the chain of $(-2)$-curves $R_{i,1},\ldots,R_{i,r_i}$ as the \emph{tail} of $F_i$.
\end{definition}

Notice that by definition the curves $R_{i,1},\ldots,R_{i,r_i}$ are vertical components of reducible fibres of the elliptic pencils $|2F_j|$ for $j\ne i$.

If $(E_1,\ldots,E_n)$ is an $n$-sequence, then $E_i.E_j=1-\delta_{ij}$ for any $1\le i,j \le n$.
The half-fibres appearing in an $n$-sequence satisfy several properties:

\begin{proposition}[{\cite[Proposition~2.8]{MMV}}] \label{prop:half_fibres}
Let $F_1,F_2$ be half-fibres with $F_1.F_2=1$.
\begin{enumerate}
    \item $F_1$ and $F_2$ share no irreducible component.
    \item There exists a simple fibre $G_2\in |2F_2|$ containing all components of $F_1$ except one.
\end{enumerate}
\end{proposition}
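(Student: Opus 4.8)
The plan is to study both half-fibres through the elliptic pencil $|2F_2|$, with respect to which $F_1$ is a bisection, and to extract enough rigidity from the combinatorics of reducible fibres to force both statements. Throughout write $\pi_2\colon S\to\BP^1$ for the fibration defined by $|2F_2|$, whose only multiple fibres are $2F_2$ and $2F_2'$ for a second half-fibre $F_2'$; since $2F_2\sim 2F_2'$ we have $F_1.F_2'=F_1.F_2=1$, so everything below applies symmetrically to $F_2'$.

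First I would set up the bisection decomposition. From $F_1.F_2=1$ and the fact that every component $D$ of the half-fibre $F_1$ satisfies $F_1.D=0$, the relation $F_1.F_2=\sum_D(\mathrm{mult}_DF_1)(D.F_2)$ with all $D.F_2\ge 0$ forces a \emph{unique} component $C_0$ of $F_1$ with $\mathrm{mult}_{C_0}F_1=1$ and $C_0.F_2=1$ (equivalently $C_0.(2F_2)=2$), all other components being $\pi_2$-vertical. Writing $F_1=C_0+V$ with $V$ the $\pi_2$-vertical part, the identities $C_0.F_1=0$ and $F_1^2=0$ give $C_0.V=2$ and $V^2=-2$. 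The same analysis applied to $F_2$ as a bisection of $|2F_1|$ yields $F_2=C_0'+V'$ with $C_0'.F_1=1$, $C_0'.V'=2$, $V'^2=-2$.

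Next I would prove that $V$ is connected. Since $F_1$ is a half-fibre it is an indecomposable curve of canonical type, so its dual graph is an extended Dynkin diagram and its multiplicities are the primitive isotropic vector; as $C_0$ has multiplicity one and every multiplicity-one vertex of an extended Dynkin diagram is a leaf (or any vertex, in the $\wt{\mathbf A}$ case), deleting $C_0$ leaves a connected finite Dynkin diagram. Hence $V$ is a connected $\pi_2$-vertical curve and lies in a single fibre $\Phi$ of $\pi_2$; likewise $V'$ lies in a single fibre of $|2F_1|$. This already gives the structural content of (2): all components of $F_1$ except $C_0$ lie in the one fibre $\Phi$.

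The heart of the argument — and the step I expect to be hardest — is to show that $\Phi$ is a \emph{simple} fibre, which I would phrase as statement (1). Suppose instead that $F_1$ and $F_2$ share a component $C$. A short degree computation rules out either half-fibre being irreducible (an irreducible curve shared with a half-fibre would be a vertical component, forcing $F_1.F_2=0$), so both are reducible and $C\neq C_0,C_0'$. As $C$ is a component of $F_2$ it lies in the fibre $2F_2$; being also a component of the connected curve $V$, it pins down $\Phi=2F_2$, so $\mathrm{Supp}(V)\subseteq\mathrm{Supp}(F_2)$, and symmetrically $\mathrm{Supp}(V')\subseteq\mathrm{Supp}(F_1)$. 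Together with $C_0\notin F_2$ and $C_0'\notin F_1$ this forces the rigid picture
\[
\mathrm{Supp}(F_1)=W\sqcup\{C_0\},\qquad \mathrm{Supp}(F_2)=W\sqcup\{C_0'\}
\]
for one common connected ADE configuration $W=\mathrm{Supp}(V)=\mathrm{Supp}(V')$. Now $F_1=C_0+V$ exhibits $C_0$ as the affine node of the extended diagram on $W\sqcup\{C_0\}$, so $V$ is the restriction of the primitive isotropic vector to $W$, i.e.\ the highest root $\theta_W$ of the (irreducible) root lattice spanned by $W$; the same applies to $V'$, giving $V=\theta_W=V'$. On the other hand, $V.F_2=V'.F_1=0$ yield $V.C_0'=C_0.V'=:s$, $V.V'=-s$ and $C_0.C_0'=1-s$, and since $C_0\neq C_0'$ forces $C_0.C_0'\ge 0$ we get $s\le 1$, so $V.V'\ge -1>-2=V^2$ and hence $V\neq V'$ — a contradiction. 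This proves (1); applying it to both $(F_1,F_2)$ and $(F_1,F_2')$ shows $\Phi$ is neither multiple fibre, so $\Phi=:G_2$ is simple and (2) follows. The main obstacle is thus isolating this rigid structure and recognising that a shared component would make both half-fibres the affine completion of the \emph{same} finite root system with a common highest root.
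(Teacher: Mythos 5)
Your proof is correct, but there is nothing in the paper to compare it against: the proposition is imported verbatim from \cite[Proposition~2.8]{MMV} and no proof is reproduced there, so any self-contained argument is necessarily a different route from the paper's citation. Your route is sound: the decomposition $F_1=C_0+V$ with $C_0$ the unique multiplicity-one component meeting $F_2$ (forced by $1=\sum_D \mathrm{mult}_D(F_1)\,(D.F_2)$ with all terms non-negative), the connectedness of $V$ after deleting a multiplicity-one vertex of an extended Dynkin diagram, the support identification $\mathrm{Supp}(V)=\mathrm{Supp}(V')=W$ in the shared-component scenario, the identification $V=\theta_W=V'$, and the closing intersection count ($V.V'=-s$, $C_0.C_0'=1-s\ge 0$, hence $V.V'\ge -1>-2=V^2$, contradicting $V=V'$) all check out, as does deducing (2) from (1) applied to both half-fibres $F_2$ and $F_2'$ of the pencil $|2F_2|$. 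Two remarks. First, the identities $C_0.V=2$, $V^2=-2$ in your setup tacitly assume $C_0^2=-2$, i.e.\ that $F_1$ is reducible; you do dispose of the irreducible case inside the proof of (1), but this should be flagged where the decomposition is introduced. Second, the step $V=\theta_W=V'$ is the one place needing genuine care: what makes it work is that $V$ is a root of full support in $\langle W\rangle$ satisfying $V.D=-C_0.D\le 0$ for every $D\in W$, i.e.\ a dominant root, and an irreducible simply-laced root system has a unique dominant root, namely the highest root --- this is what guarantees that $V$ is intrinsic to $W$ and does not depend on how $C_0$ attaches. In the paper's standing setting (characteristic $\neq 2$) you could bypass this subtlety entirely: half-fibres on Enriques surfaces are of Kodaira type $I_m$, so all component multiplicities equal $1$ and $V=\sum_{D\in W}D=V'$ is immediate from the equality of supports; your more general argument buys validity for arbitrary canonical-type configurations, which is more than the proposition needs.
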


We conclude the section with some crucial results concerning $n$-sequences, that exhibit the rich structure which is present on Enriques surfaces. In the following theorems concluding the section, we assume that $S$ is an Enriques surface over the algebraically closed field $\ov k$ (of characteristic different from $2$).

\begin{theorem}[{\cite{Cossec}}] \label{thm:extendability}
 Every $n$-sequence $(E_1,\ldots,E_n)$ with $n<9$ can be extended to a $10$-sequence. In other words there exist isotropic divisors $E_{n+1},\ldots,E_{10}$ such that $(E_1,\ldots,E_{10})$ is a $10$-sequence up to reordering.
\end{theorem}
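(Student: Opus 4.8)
The plan is to split the statement into a purely lattice-theoretic existence step and a geometric effectivity step, exploiting that $\Num(S)\cong\mathbf{U}\oplus\mathbf{E}_8$ is unimodular of signature $(1,9)$. First I would record the relevant Gram data. For an $n$-sequence the cumulative classes satisfy $E_i.E_j=1-\delta_{ij}$, so for $n\ge 2$ the sublattice $L=\langle E_1,\dots,E_n\rangle$ has Gram matrix $J_n-I_n$ (all-ones minus identity), whose eigenvalues are $n-1$ (once) and $-1$ ($n-1$ times); hence $L$ is nondegenerate of signature $(1,n-1)$, and its orthogonal complement $M=L^{\perp}\subset\Num(S)$ is negative definite of rank $10-n$. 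The hypothesis $n<9$ is exactly what forces $\rk M\ge 2$, and this is the source of the ``room'' needed to extend; for $n=9$ the complement is a single line and the construction below degenerates. (The case $n=1$ I would reduce to $n=2$ by first adjoining any half-fibre meeting $F_1$ once.)

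Second, I would produce the missing classes inside $\Num(S)$. Writing a candidate as $E_{n+1}=y+m$ with $y\in L\otimes\QQ$, $m\in M\otimes\QQ$, the conditions $E_{n+1}.E_j=1$ force $y=\tfrac{1}{n-1}\sum_{i=1}^n E_i$ independently of $j$, and then $E_{n+1}^2=0$ forces $m^2=-\tfrac{n}{n-1}$. More generally the whole tuple $(E_{n+1},\dots,E_{10})$ corresponds to $10-n$ vectors $m_k\in M\otimes\QQ$ with Gram matrix $-I-\tfrac{1}{n-1}J$, a full-rank configuration in $M$. So the existence question reduces to whether $M$ is isometric to the orthogonal complement of the standard partial isotropic sequence in $\mathbf{U}\oplus\mathbf{E}_8$. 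I would settle this through uniqueness of lattice embeddings: since $\mathbf{U}\oplus\mathbf{E}_8$ splits off a hyperbolic plane and $n<9$ leaves enough corank, Eichler's criterion (equivalently Nikulin's theorem on primitive embeddings) shows that the embedding of the saturation of $L$ is unique up to $\mathrm{O}(\mathbf{U}\oplus\mathbf{E}_8)$. Transporting a fixed abstract $10$-sequence by an isometry matching its first $n$ terms to $(E_1,\dots,E_n)$ then yields numerical classes $E_{n+1},\dots,E_{10}\in\Num(S)$ with the prescribed intersection pattern.

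Third, I would upgrade these numerical classes to the required effective divisors. By Riemann--Roch on $S$, using $\chi(\CO_S)=1$ and that $K_S$ is numerically trivial, any class $D$ with $D^2=0$ satisfies $\chi(D)=1$, so $D$ or $K_S-D$ is effective; choosing signs, each new class is effective up to numerical equivalence. Analyzing the Zariski-type decomposition of such an effective isotropic class into its nef part plus a negative chain, the nef part is a primitive isotropic class, hence a half-fibre on $S$, and the negative chain is a chain of $(-2)$-curves — precisely the $F_i$-plus-tail shape demanded by the definition. Carrying this out for all new classes and relabelling recovers a genuine $10$-sequence up to reordering.

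The hard part is this last, geometric step. One must realise all the new classes simultaneously as half-fibres together with their tails, while keeping the given $E_1,\dots,E_n$ intact and reproducing exactly the combinatorial intersection conditions of the definition. Controlling the reflections in $(-2)$-curves so that they fix the prescribed initial part of the sequence, do not obstruct the positivity reduction of the remaining classes, and match the discarded $(-2)$-curves to the tails $R_{i,j}$ — this is where the genuine work lies. By contrast, the lattice-theoretic steps are essentially forced once the rank count $\rk M\ge 2$ is in place.
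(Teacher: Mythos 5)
Your two-step architecture---first produce numerical classes by transporting an abstract $10$-sequence with an isometry of $\Num(S)$ matching the first $n$ terms, then upgrade the new classes to effective divisors of half-fibre-plus-tail shape---is in fact exactly the architecture of the paper's proof. But both of your key steps have genuine gaps. For the lattice step, the criteria you invoke do not apply: Eichler's criterion requires the ambient lattice to split off $\mathbf{U}\oplus\mathbf{U}$, which is impossible for $\Num(S)\cong\mathbf{U}\oplus\mathbf{E}_8$ of signature $(1,9)$; and Nikulin's uniqueness theorem for primitive embeddings into even unimodular lattices requires strict signature inequalities $l_{\pm}>s_{\pm}$, whereas here $l_+-s_+=1-1=0$ because the span of the sequence already occupies the unique positive direction. (General theory only gives transitivity on single primitive isotropic vectors; for rank-$n$ configurations one must also control the genus of the negative definite complement and the surjectivity of its orthogonal group onto the discriminant automorphisms, none of which you address.) The transitivity of $\mathrm{O}(\Num(S))$ on isotropic $n$-sequences is a genuinely special fact about the Enriques lattice; the paper gets it from Cossec's Lemma~1.6.1, and your general-theory substitute does not prove it.

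For the geometric step you explicitly concede that realizing all new classes simultaneously as half-fibres with tails, reproducing the exact intersection conditions of the definition while leaving $E_1,\dots,E_n$ untouched, is ``where the genuine work lies''---and you do not carry it out. Riemann--Roch plus a decomposition of each effective isotropic class separately gives neither the mutual conditions (the cumulative-chain shape $F_i+\sum_j R_{i,j}$ with the prescribed pairwise intersections) nor the preservation of the given initial segment: the reflections in $(-2)$-curves needed to put one class into the required form can destroy the form of another, or move the original $E_i$. The paper closes precisely this gap by citing Cossec's Theorem~3.3, which converts any numerical $10$-tuple of isotropic classes pairwise meeting with multiplicity $1$ into a genuine $10$-sequence, and whose uniqueness clause is what guarantees that the first $n$ divisors are unaffected. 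So the two facts your proposal leaves unproved are exactly the two theorems of Cossec on which the paper's proof rests; as written, your argument restates them rather than establishing them.
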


\begin{proof}
Pick ten vectors $e_1,\ldots,e_{10}\in \Num(S)$ such that $e_i^2=0$ for any $i=1,\ldots,10$ and $e_i.e_j=1$ for any $i\ne j$ (e.g. follow \cite[§1.1]{Cossec}). By \cite[Lemma~1.6.1]{Cossec} there exists an isometry $ \psi$ of $\Num(S)$ such that $\psi(e_i)=E_i$ for $i=1,\ldots,n$, so we can define $E_j':=\psi(e_j)$ for $j>n$ and obtain a $10$-uple $(E_1,\ldots,E_n,E_{n+1}',\ldots,E_{10}')$ such that $E_i^2=E_i'^2=0$ for any $i=1,\ldots,10$ and $E_i.E_j=E_i.E_j'=E_i'.E_j'=1$ for any $i\ne j$.
Now we apply \cite[Theorem~3.3]{Cossec} and we obtain the desired $10$-sequence $(E_1,\ldots,E_{10})$. Notice that by the uniqueness of the divisors $E_1,\ldots,E_{10}$ in \cite[Theorem~3.3]{Cossec}, it follows that this process does not affect the $n$ divisors $E_1,\ldots,E_n$.
\end{proof}

\begin{theorem}[{\cite[Theorem~3.5]{Cossec}, \cite[Theorem~1.3, Corollary~1.6]{MMV0}}]\label{thm:three}
 Let $S$ be an Enriques surface and $F_1$ be a half-fibre on $S$. There exist half-fibres $F_2$ and $F_3$ such that $(F_1,F_2,F_3)$ is a $3$-sequence.
\end{theorem}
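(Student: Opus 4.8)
The plan is to separate the statement into a purely lattice-theoretic problem and a geometric realisation step. Recall that $\Num(S)\cong\mathbf{U}\oplus\mathbf{E}_8$ and that a half-fibre is (the class of) a primitive, nef, isotropic divisor $F$, so that $|2F|$ is an elliptic pencil. A $3$-sequence $(F_1,F_2,F_3)$ consisting of half-fibres is, by the defining conditions with empty tails, nothing but three half-fibres with $F_i.F_j=1-\delta_{ij}$. Thus it suffices to produce two nef primitive isotropic classes $F_2,F_3$ with $F_1.F_2=F_1.F_3=F_2.F_3=1$, and the proof splits into (i) finding such a configuration abstractly in $\Num(S)$, and (ii) realising the abstract classes by genuine half-fibres.

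For (i) I would argue inductively. First I produce a half-fibre $F_2$ with $F_1.F_2=1$ (a $2$-sequence): fixing a decomposition of $\Num(S)$ in which $F_1$ is a standard isotropic generator of a copy of $\mathbf{U}=\langle F_1,v\rangle$, every class of the form $-\tfrac{w^2}{2}F_1+v+w$ with $w\in\mathbf{E}_8$ is isotropic and meets $F_1$ in $1$; taking $w=0$ gives an abstract partner, which I then realise as a half-fibre as in (ii). Once $F_1,F_2$ are half-fibres with $F_1.F_2=1$, the sublattice $\langle F_1,F_2\rangle\cong\mathbf{U}$ is unimodular, hence primitively embedded, and its orthogonal complement in the unimodular lattice $\Num(S)$ is an even, negative-definite, unimodular lattice of rank $8$, namely $\mathbf{E}_8$. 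Writing $F_3=F_1+F_2+w$ for a root $w\in\mathbf{E}_8$ one checks immediately that $F_3^2=2+w^2=0$ and $F_3.F_1=F_3.F_2=1$, so $F_3$ is the desired abstract third class.

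The substance of the argument is step (ii): upgrading an abstract primitive isotropic class with prescribed intersection numbers to an actual half-fibre without disturbing the configuration. Here I would invoke the reduction to the nef cone by the Weyl group generated by reflections in $(-2)$-curves, together with Cossec's canonical passage from a tuple of isotropic classes with pairwise products $1$ to a genuine isotropic $n$-sequence (the same mechanism, \cite[Theorem~3.3]{Cossec}, used in the proof of \Cref{thm:extendability}), whose uniqueness leaves already-realised half-fibres untouched. The delicate point, and the step I expect to be the main obstacle, is to control the tails: the $n$-sequence produced by this process could a priori take the form $(F_1,F_1+R_{1,1},\dots)$ with auxiliary $(-2)$-curves rather than three honest half-fibres. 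Ruling this out amounts to showing that the nef representatives can be chosen with empty tails, which is governed by the behaviour of the $\Phi$-function and the fine structure of short isotropic sequences established in \cite{MMV0}; this is where the real work lies, whereas step (i) and the verifications above are routine.
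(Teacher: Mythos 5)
First, an important point of comparison: the paper does not prove this theorem at all. It is imported wholesale from the literature --- the bracketed citations \cite[Theorem~3.5]{Cossec} and \cite[Theorem~1.3, Corollary~1.6]{MMV0} \emph{are} the paper's entire treatment --- so the only question is whether your proposal stands on its own as an argument. It does not, and by your own admission. Your reformulation (a $3$-sequence of half-fibres is exactly three half-fibres pairwise meeting in $1$) and your step (i) are correct but carry essentially no weight: since $\Num(S)\cong\mathbf{U}\oplus\mathbf{E}_8$ is even and unimodular, any primitive isotropic class splits off a hyperbolic plane, and the classes $F_2:=v$ and $F_3:=F_1+F_2+w$, with $w$ a root in the complementary $\mathbf{E}_8$, form an abstract triple with the right intersection numbers. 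All of this is routine; the entire content of the theorem sits in your step (ii), and there your proposal offers a pointer rather than a proof.

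Concretely, here is why the missing step is not a technicality of your argument but the theorem itself. Making the abstract class $v$ nef via the Weyl group destroys the condition $F_1.v=1$: a reflection in a $(-2)$-curve $R$ sends $v$ to $v+(v.R)R$, and since $F_1$ is nef and $v.R$ may be negative, the product with $F_1$ can drop. Cossec's Theorem~3.3 (the mechanism used in the paper's proof of \Cref{thm:extendability}) repairs the intersection conditions, but only at the price of returning an isotropic sequence that may have tails, e.g.\ one of the form $(F_1,F_1+R_{1,1},F_2)$ containing only two honest half-fibres --- and \Cref{thm:extendability}, unlike the present statement, tolerates tails. Nothing in your construction, in particular not the freedom in choosing the root $w$, controls whether tails appear: that depends on the configuration of $(-2)$-curves on the particular surface $S$ and cannot be excluded a priori. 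Proving that \emph{some} tail-free realisation extending the given half-fibre $F_1$ always exists is precisely the assertion being proved, and precisely what \cite{MMV0} establishes; your appeal to ``the behaviour of the $\Phi$-function and the fine structure of short isotropic sequences established in \cite{MMV0}'' is therefore not a deferred lemma inside your proof but a citation of the theorem itself. In effect, your proposal reproduces the paper's treatment (a citation of \cite{Cossec} and \cite{MMV0}) prefixed by a lattice computation that was never the obstacle.
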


\begin{theorem}[{\cite[Corollary~1.4]{MMV}}] \label{thm:four}
 Every Enriques surface admits four half-fibres $F_1,\ldots,F_4$ forming a $4$-sequence.
\end{theorem}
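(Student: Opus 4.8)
The plan is to build on \Cref{thm:three}, which already produces three half-fibres forming a $3$-sequence, and to manufacture a fourth. A $4$-sequence consisting of four half-fibres is precisely a quadruple $F_1,\dots,F_4$ of half-fibres with $F_i.F_j=1-\delta_{ij}$ and empty tails, so the content of the statement is the existence of one more half-fibre meeting each of three given ones transversally in a single point. Accordingly, I would fix a $3$-sequence of half-fibres $(F_1,F_2,F_3)$ as in \Cref{thm:three} and aim to produce a half-fibre $F_4$ with $F_4.F_i=1$ for $i=1,2,3$.

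The first step is lattice-theoretic and essentially free. Working in $\Num(S)\cong\mathbf U\oplus\mathbf E_8$, I would extend the abstract isotropic triple to an isotropic quadruple by choosing, as in the proof of \Cref{thm:extendability}, vectors $e_1,\dots,e_{10}$ with $e_i^2=0$ and $e_i.e_j=1-\delta_{ij}$, and transporting them by an isometry $\psi$ of $\Num(S)$ with $\psi(e_i)=F_i$ for $i=1,2,3$. This yields a primitive isotropic class $D=\psi(e_4)\in\Num(S)$ with $D.F_i=\psi(e_4).\psi(e_i)=e_4.e_i=1$. Acting by the Weyl group generated by reflections in $(-2)$-curves, I may replace $D$ by a nef class in its orbit; a primitive, nef, isotropic class is a half-fibre, so it defines an elliptic pencil $|2F_4|$ and $F_4$ is the sought fourth half-fibre — provided its intersection numbers with $F_1,F_2,F_3$ are still equal to $1$.

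The crux, and the step I expect to cause the real difficulty, is exactly this proviso: controlling the numbers $F_4.F_i$ under the passage to a nef representative. Phrased through \Cref{thm:extendability}, the extended isotropic divisor produced by the realisation result underlying that theorem is a priori of the form $F_4+\sum_j R_{4,j}$ with a possibly nonempty tail of $(-2)$-curves, and one must show that for a suitable choice the tail is empty. A reflection in a $(-2)$-curve $R$ alters $D.F_i$ only when both $D.R\neq 0$ and $R.F_i\neq 0$; since each $F_i$ is nef, the dangerous curves are those meeting some $F_i$ positively. Here I would bring in \Cref{prop:half_fibres}: for a pair of half-fibres meeting in $1$, all but one component of one lies in a single fibre of the other's pencil, so the $(-2)$-curves that could contribute to a tail are forced to be vertical components of the pencils $|2F_i|$. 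Their number is then constrained by the Shioda--Tate bound of \Cref{prop:embedE8}, and a case analysis on the reducible and multiple fibres of the $|2F_i|$ should show that the correction is absorbable, i.e.\ that $F_4$ can be taken with empty tail and $F_4.F_i=1$.

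An alternative route, closer in spirit to \Cref{thm:extendability}, is to extend $(F_1,F_2,F_3)$ all the way to a $10$-sequence and argue that the number $c$ of half-fibres occurring in it satisfies $c\ge 4$, so that the first four of them form the desired $4$-sequence. The obstacle is the same in disguise: writing the total tail length as $10-c$ and noting that each tail curve of $F_i$ is vertical in the $c-1$ pencils $|2F_k|$ with $k\neq i$, the naive application of \Cref{prop:embedE8} pencil by pencil yields only $(c-1)(10-c)\le 8c$, which does not exclude $c=3$. One therefore again needs the finer positional information of \Cref{prop:half_fibres} to force the existence of an extra pure half-fibre, which is precisely where the substance of the argument lies.
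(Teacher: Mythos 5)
Your proposal is not a proof: both routes you describe stop exactly where the theorem's actual content begins, and you say so yourself. In the first route, after transporting $e_4$ by the isometry $\psi$ and reflecting into the nef cone, each reflection in a $(-2)$-curve $R$ with $D.R<0$ changes $D.F_i$ by $(D.R)(R.F_i)\le 0$ (as $F_i$ is nef), so the nef representative $F_4$ only satisfies $F_4.F_i\in\{0,1\}$; and since two primitive nef isotropic classes with zero intersection in the hyperbolic lattice $\Num(S)$ must coincide, the case $F_4.F_i=0$ means $F_4=F_i$. Thus the whole theorem amounts to showing that the Weyl-group folding can be arranged not to collapse the fourth class onto one of $F_1,F_2,F_3$, and for this you offer only that ``a case analysis \ldots should show that the correction is absorbable.'' In the second route you candidly compute that the naive count via \Cref{prop:embedE8} gives $(c-1)(10-c)\le 8c$, which does not exclude $c=3$. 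In both versions the decisive step is asserted, not proved; no amount of restating \Cref{prop:half_fibres} and \Cref{prop:embedE8} closes it, because which $(-2)$-classes are actually effective (and hence which reflections are forced) is geometric information not visible in the abstract lattice.

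For comparison: the paper does not prove this statement at all. It imports it as \cite[Corollary~1.4]{MMV}; the assertion that every Enriques surface in characteristic $\neq 2$ carries a $4$-sequence of half-fibres (non-degeneracy at least $4$) is the main theorem of that separate paper, established there through a detailed study of special $3$-sequences and triangle graphs --- the same machinery this paper borrows later in the proof of \Cref{prop:small_overexceptional}. That no soft argument of the kind you sketch can work is indicated by the characteristic-$2$ situation: there $\Num(S)\cong\mathbf{U}\oplus\mathbf{E}_8$ and the lattice-theoretic setup is identical, yet the corresponding non-degeneracy statements fail for certain exceptional surfaces, so genuinely geometric input beyond reflection bookkeeping and Shioda--Tate bounds is unavoidable. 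In short, you have correctly located the difficulty, but what you present is a plan for a research paper (essentially the one being cited), not a proof.
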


\section{Overlattices of root lattices}\label{sec:moduli}

We devote this section to an in-depth study of the overlattices of root lattices. In order to prove the Hilbert property for many K3 surfaces, we need to show that the complement of a union $Z$ of $(-2)$-curves is simply connected. From \cite[Theorem~4.3]{ShimadaZhang} by Shimada and Zhang, it is sufficient to study the primitiveness of the embedding of the root lattice $R$ spanned by the components of $Z$ into the Picard lattice of the K3 surface. The goal of this section is to show that, if this embedding is \emph{not} primitive, then $R$ satisfies several restrictive properties.
We will keep the notations and conventions about ADE lattices introduced in Section \ref{sec:lattices}.

The following result already appeared in \cite[Corollary 1.2]{Schuett}; we give a simpler proof using the rigidity of $(-2)$-curves on K3 surfaces. Recall that, if $L\hookrightarrow M$ is an embedding of lattices, the saturation $L_{sat}$ of $L$ in $M$ is the smallest primitive sublattice of $M$ containing the image of $L$.

\begin{proposition} \label{prop:primitive}
Let $X$ be a K3 surface, and consider a root lattice $R$ spanned by some $(-2)$-curves on $X$. The roots of the saturation $R_{sat}$ in $\Pic(X)$ belong to $R$. In particular, if the embedding $R\hookrightarrow \Pic(X)$ is not primitive, then any intermediate overlattice $R\subsetneq R'\subseteq R_{sat}$ is not a root lattice.
\end{proposition}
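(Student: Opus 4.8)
The plan is to prove the main assertion—that every root of $R_{sat}$ lies in $R$—and then read off the ``in particular'' clause formally. Throughout write $V:=R\otimes\QQ$; since $R$ is a root lattice it is even and negative definite, so $V$ is negative definite, and as saturation preserves rank we have $R_{sat}=V\cap\Pic(X)$ with $R_{sat}\otimes\QQ=V$.

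\emph{Step 1 (reduce to effective classes).} Let $\delta\in R_{sat}$ with $\delta^2=-2$. By Riemann--Roch on the K3 surface, $\chi(\CO_X(\delta))=2+\tfrac12\delta^2=1$, and Serre duality gives $\RH^2(X,\CO_X(\delta))=\RH^0(X,\CO_X(-\delta))$; hence $\dim\RH^0(X,\CO_X(\delta))+\dim\RH^0(X,\CO_X(-\delta))\ge 1$, so one of $\pm\delta$ is effective. As $R$ and $R_{sat}$ are stable under $\delta\mapsto-\delta$, I may assume $\delta$ is effective. This is where the ``rigidity'' of $(-2)$-classes enters.

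\emph{Step 2 (split off the external part).} Write $\delta=\sum_j n_j\Gamma_j$ as a sum of distinct irreducible curves with $n_j\ge 1$, and let $C_1,\dots,C_N$ be the $(-2)$-curves spanning $R$. Group the components into $A$, the sum of those $\Gamma_j$ equal to some $C_i$, and $B$, the sum of the remaining \emph{external} components. Then $A\in R$, and $B=\delta-A\in V$, so it suffices to prove $B=0$. Because every external component $\Gamma_j$ is an irreducible curve distinct from each $C_i$, we have $\Gamma_j.C_i\ge 0$, and therefore $B.C_i\ge 0$ for every generating curve $C_i$.

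\emph{Step 3 (positivity, the crux).} Choose a linearly independent subset $C_1,\dots,C_r$ of the generating curves forming a $\QQ$-basis of $V$, and let $G=(C_i.C_j)_{i,j=1}^r$ be its (negative definite) Gram matrix. Writing $B=\sum_{i=1}^r a_iC_i$ with $a\in\QQ^r$, the inequalities from Step 2 read $Ga\ge 0$ entrywise. Now $-G$ is positive definite with non-positive off-diagonal entries (since $C_i.C_j\ge 0$ for $i\neq j$), i.e.\ a Stieltjes matrix, so $(-G)^{-1}$ has non-negative entries; setting $y:=(-G)a=-Ga\le 0$ yields $a=(-G)^{-1}y\le 0$. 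Pairing with an ample class $H$ gives $B.H=\sum_i a_i(C_i.H)\le 0$ because $C_i.H>0$; but $B$ is effective, so $B.H\ge 0$ with equality forcing $B=0$. Hence $B=0$ and $\delta=A\in R$.

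\emph{Step 4 (conclusion and the obstacle).} For the final clause, suppose $R\hookrightarrow\Pic(X)$ is not primitive, so $R\subsetneq R_{sat}$, and let $R\subsetneq R'\subseteq R_{sat}$ be an intermediate overlattice. If $R'$ were a root lattice it would be generated by its roots; these are roots of $R_{sat}$, hence belong to $R$ by the main assertion, forcing $R'\subseteq R$—a contradiction. I expect the genuine obstacle to be Step 3: abstractly an overlattice can acquire new roots (for instance $\mathbf{A}_2^{\oplus 3}$ saturates to $\mathbf{E}_6$, which has many more roots), so the statement must use geometry in an essential way. The decisive point is to exclude roots whose effective representative is supported on curves outside $\{C_i\}$, and this is exactly what the combination of effectivity (Riemann--Roch), ample positivity, and the Stieltjes-matrix positivity of the pairwise non-acute curve configuration achieves.
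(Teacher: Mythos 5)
Your proof is correct, and its crux is genuinely different from the paper's. Both arguments start identically: Riemann--Roch on the K3 surface shows that one of $\pm\delta$ is effective, and the whole difficulty is to exclude irreducible components of the effective representative lying outside $\{C_i\}$. The paper does this by a linear-system rigidity argument: writing $\delta=\sum_i\alpha_i C_i$ with $\alpha_i\in\QQ$, clearing denominators and moving the negative terms to the side of the irreducible components, it obtains an equality of two effective classes and then shows that the linear system of an effective divisor supported on a negative definite configuration has a unique member (any mobile part $M$ of such a divisor lies in $R$, hence $M^2<0$ unless $M=0$); equality of classes is therefore an equality of divisors, so every component of $\delta$ is one of the $C_i$. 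You instead split the effective representative as internal plus external, $\delta=A+B$, note that $B\in R\otimes\QQ$ satisfies $B.C_i\ge 0$, and invoke the negativity lemma for negative definite curve configurations --- proved in your case via the fact that the inverse of a Stieltjes matrix is entrywise non-negative --- to force the coefficients of $B$ to be $\le 0$, whence $B.H\le 0$ against an ample $H$ and so $B=0$. This M-matrix argument is the classical negativity lemma (familiar from surface singularity theory and Zariski decomposition) and is correct as you state it. The trade-off: the paper's route stays entirely inside the geometry of linear systems on the surface, while yours isolates a reusable numerical lemma and makes transparent that the only geometric inputs are effectivity, non-negativity of intersections of distinct irreducible curves, and ampleness; both use the K3 hypothesis only through Riemann--Roch. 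Your formal deduction of the ``in particular'' clause, and your remark that the statement must be geometric (abstractly $\mathbf{A}_2^{\oplus 3}$ saturates to $\mathbf{E}_6$ and acquires roots), agree with the paper.
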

\begin{proof}
Let $D$ be a vector of norm $-2$ in the saturation $R_{sat}$ of $R$.
If $R$ is spanned by $(-2)$-curves $C_1,\ldots,C_r$, we can write $D=\sum_{i}{\alpha_i C_i}$ for $\alpha_i \in \QQ$. By Riemann--Roch either $D$ or $-D$ is effective, and without loss of generality we can assume that $D$ is effective. Hence at least one of the $\alpha_i$ is positive. Moreover we can write $D=C_1'+\ldots+C_s'$ for some (not necessarily distinct) irreducible curves $C_1',\ldots,C_s'$ on $X$. After separating the $C_i$ with positive and negative coefficients $\alpha_i$, we get an equality
\begin{equation} \label{eq:1}
\sum_j{C_j'}+\sum_{\alpha_i < 0}{(-\alpha_i)C_i}=\sum_{\alpha_i \ge 0}{\alpha_i C_i}
\end{equation}
inside $\Pic(X)$. Up to multiplying both sides by a positive integer, we can assume that the $\alpha_i$ are integers. Since the curves $C_i$ belong to the negative definite lattice $R$, the linear system associated to the right hand side contains only one effective divisor, which is itself. Indeed for any decomposition $\sum_{\alpha_i \ge 0}{\alpha_i C_i}=M+F$ with $M$ mobile and effective and $F$ fixed and effective, we have $M\in R$ (since clearly the fixed locus $F$ belongs to $R$) and therefore $M^2 < 0$ unless $M=0$.

Since the left hand side is effective as well, we deduce that equality (\ref{eq:1}) is an equality of divisors. This implies that the curves $C_1',\ldots,C_s'$ coincide with some of the $C_i$, hence they are contained in $R$, and so $D\in R$.
\end{proof}

\begin{proposition} \label{prop:non_root_overlattice}
Let $R$ be a root lattice and let $S$ be the isotropic subgroup of $A_R$ corresponding to the overlattice $R'$ of $R$. The overlattice $R'$ is a root lattice if and only there is a set $\{v_1,\ldots,v_n\}$ of generators of $S$ such that each $v_i$ has a preimage $w_i\in R^\vee$ of norm $-2$ under the projection map $R^\vee\to A_R$.
\end{proposition}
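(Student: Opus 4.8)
The plan is to reduce the whole statement to the defining property of a root lattice, namely that it is generated (as a $\ZZ$-module) by its roots, i.e.\ its vectors of norm $-2$. The starting observation is that an overlattice $R'$ of $R$ is automatically even and negative definite: it is even by the definition of overlattice, and negative definite because the correspondence recalled above places it inside $R^\vee\subseteq R\otimes\QQ$, where the form is negative definite. Hence the \emph{only} condition distinguishing whether $R'$ is a root lattice is whether $R'$ admits a generating set of roots. Moreover, under the overlattice–isotropic subgroup correspondence, $R'$ is exactly the full preimage of $S$ under the projection $\pi\colon R^\vee\to A_R$; in particular $R\subseteq R'\subseteq R^\vee$, $R'/R=S$, and the norm $-2$ preimages one would adjoin to $S$ are precisely roots lying in $R'$. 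I would prove the two implications separately.

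For the ``if'' direction, suppose $v_1,\dots,v_n$ generate $S$ and admit preimages $w_1,\dots,w_n\in R^\vee$ with $w_i.w_i=-2$. Since $\pi(w_i)=v_i\in S=R'/R$ and $R'=\pi^{-1}(S)$, each $w_i$ already lies in $R'$. The key identification is then $R'=R+\sum_i\ZZ w_i$: the right-hand side is contained in $R'$, while its quotient by $R$ is generated by the classes $v_i$, which generate all of $R'/R$, forcing equality. As $R$ is itself generated by roots and each $w_i$ is a root, $R'$ is generated by roots and is therefore a root lattice.

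For the ``only if'' direction, assume $R'$ is a root lattice and pick roots $\rho_1,\dots,\rho_m\in R'$ that generate $R'$. Their images $\pi(\rho_1),\dots,\pi(\rho_m)$ lie in $S$ and generate $S=R'/R$, because the $\rho_j$ generate $R'$. Discarding the ones mapping to $0$, the remaining classes still generate $S$, and for each such class the corresponding root $\rho_j\in R'\subseteq R^\vee$ is a preimage of norm $-2$. This is exactly the data the statement asks for.

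The argument is short and I do not anticipate a genuine obstacle; the two points demanding care are the auxiliary facts feeding both directions. The first is that $R'\subseteq R^\vee$ and equals $\pi^{-1}(S)$, which is the content of the overlattice–isotropic subgroup correspondence from \cite{Nikulin_3}. The second is the identity $R'=R+\sum_i\ZZ w_i$, expressing that adjoining the chosen preimages recovers the entire overlattice rather than a proper sublattice of it. Once these are recorded, both implications collapse to the tautology that a lattice is a root lattice precisely when it possesses a generating set of roots, with the roots of $R'$ serving as the distinguished preimages in one direction and the hypothesised $w_i$ serving as roots of $R'$ in the other.
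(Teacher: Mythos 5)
Your proof is correct and follows essentially the same route as the paper: the ``if'' direction observes that adjoining norm $-2$ preimages of generators of $S$ to the root lattice $R$ yields a lattice generated by roots, and the ``only if'' direction projects a generating set of roots of $R'$ to $A_R$ to obtain the required generators of $S$. The only difference is cosmetic: you spell out the identifications $R'=\pi^{-1}(S)$ and $R'=R+\sum_i\ZZ w_i$, which the paper leaves implicit in the phrase ``$R'$ is obtained from $R$ by adjoining preimages.''
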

\begin{proof}
By definition $R'$ is obtained from $R$ by adjoining preimages $w_i \in R^\vee$ of the $v_i\in A_R$. If the $w_i$ have norm $-2$, then $R'$ admits a set of generators given by vectors of norm $-2$, and hence $R'$ is a root lattice.

Assume conversely that $R'$ is a root lattice, and let $b_1,\ldots,b_m$ be a set of roots generating $R'$. Up to permuting the indices we can assume that the projections $v_1=\overline b_1,\ldots, v_n=\overline b_n\in A_R$ of $b_1,\ldots,b_n$ under the projection map $R^\vee\to A_R$ generate the subgroup $S$. By construction $\{v_1,\ldots,v_n\}$ is the desired set of generators for $S$.
\end{proof}

Let $R$ be a root lattice, spanned by vectors $e_i$, and $R'$ an overlattice of $R$ of index $p$ prime. We say that $R'$ is \emph{given} by $v\in R^\vee$ if $R'=R[v]$, that is, if $R'$ is obtained from $R$ by adjoining $v$. Moreover, we consider $v$ to be \emph{minimal}, i.e.\ $v=\frac{1}{p}(\alpha_1e_1+\ldots+\alpha_re_r)$ for some $0\le \alpha_i\le p-1$. We say that the overlattice $R'$ is \emph{concentrated} at a sublattice $\widetilde{R}\subseteq R$ if $\widetilde{R}$ is spanned by those $e_i$ with a non-zero coefficient $\alpha_i$ in $v$. Notice that this definition depends on the choice of the generators of $R$. The following proposition shows that overlattices of root lattices are concentrated at very special sublattices.

\begin{proposition} \label{prop:conc}
Let $R$ be a root lattice, $R'$ an overlattice of $R$ of index $p$ prime. Then $R'$ is concentrated at a sublattice of $R$ isometric to $\mathbf{A}_{p-1}^{\oplus r}$, for some $r\ge 1$. Therefore, if $R'$ is not a root lattice, then $\mathbf{A}_{p-1}^{\oplus r}$ admits an overlattice that is not a root lattice as well.
\end{proposition}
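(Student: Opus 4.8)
The plan is to localize the entire computation to the irreducible ADE summands of $R$. I would write $R=\bigoplus_k L_k$ with each $L_k$ of type $\mathbf A$, $\mathbf D$ or $\mathbf E$, and take the $e_i$ to be the simple roots, so that inside a fixed summand $e_i.e_i=-2$, $e_i.e_j=1$ when $i,j$ are adjacent in the Dynkin diagram and $e_i.e_j=0$ otherwise. Let $v=\frac1p\sum_i\alpha_ie_i$ with $0\le\alpha_i\le p-1$ be the minimal vector giving $R'=R[v]$. Since $[R':R]=p$ is prime, the associated isotropic subgroup is $S=\langle\bar v\rangle\cong\ZZ/p$, and $\bar v$ projects into each $A_{L_k}$ to an element of order $1$ or $p$. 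If this order is $1$ then $v|_{L_k}\in L_k$, and because the simple roots form a basis, minimality forces every $\alpha_i$ supported on $L_k$ to vanish. Hence only those summands whose discriminant group contains an element of order $p$ can carry support; by \Cref{ADE} these are $\mathbf A_n$ with $p\mid n+1$ (for arbitrary $p$), together with $\mathbf D_n$ and $\mathbf E_7$ when $p=2$, and $\mathbf E_6$ when $p=3$.

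Next I would translate the integrality condition $v\in R^\vee$, i.e.\ $v.e_j\in\ZZ$ for all $j$, into the congruence
$$\sum_{i\sim j}\alpha_i\equiv 2\alpha_j\pmod{p}\qquad\text{for every }j,$$
where $i\sim j$ denotes adjacency. On a type-$\mathbf A_n$ chain $1-2-\cdots-n$ this reads $\alpha_{j+1}-\alpha_j\equiv\alpha_j-\alpha_{j-1}$ with the boundary convention $\alpha_0=\alpha_{n+1}=0$ imposed by the two leaves, so the solutions are the linear sequences $\alpha_j\equiv aj\pmod{p}$; the end condition $a(n+1)\equiv0$ is automatic once $p\mid n+1$. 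For $a\not\equiv0$ the vanishing coefficients sit exactly at the multiples of $p$, which cut the chain into $m=(n+1)/p$ blocks of $p-1$ consecutive roots. Each block spans $\mathbf A_{p-1}$, and distinct blocks are separated by a deleted node, hence are mutually orthogonal, so the support on $\mathbf A_n$ spans $\mathbf A_{p-1}^{\oplus m}$. The exceptional summands $\mathbf D_n,\mathbf E_7$ ($p=2$) and $\mathbf E_6$ ($p=3$) I would dispose of by a direct check: solving the same congruence on each diagram (for instance, at $p=2$ the leaf conditions force the coefficients to alternate along each chain, so the surviving roots are pairwise non-adjacent and span $\mathbf A_1^{\oplus(\cdot)}$; at $p=3$ the two length-two arms of $\mathbf E_6$ carry the support and span $\mathbf A_2^{\oplus2}$) shows in every case that the surviving roots form mutually non-adjacent sub-chains of length $p-1$, i.e.\ span a copy of $\mathbf A_{p-1}^{\oplus(\cdot)}$. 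Taking the orthogonal sum over all $L_k$ gives the concentration $\widetilde R\cong\mathbf A_{p-1}^{\oplus r}$, with $r\ge1$ since $\bar v\ne0$.

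For the final implication I would argue directly with generators. As $v$ is supported on $\widetilde R$ and lies in $R^\vee$, it lies in $\widetilde R^\vee$, so $\widetilde R':=\widetilde R+\ZZ v$ is an honest overlattice of $\widetilde R=\mathbf A_{p-1}^{\oplus r}$, namely the one given by $v$. Since $\widetilde R\subseteq R$ and $v\in\widetilde R'$, we have $R'=R+\ZZ v=R+\widetilde R'$. If $\widetilde R'$ were a root lattice it would be generated by vectors of norm $-2$; adjoining the roots $e_i$ that generate $R$, these norm $-2$ vectors would generate $R'=R+\widetilde R'$, so $R'$ would itself be a root lattice. Contrapositively, if $R'$ is not a root lattice then $\widetilde R'$ is not either, which is the desired conclusion.

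The technical heart, and the step I expect to be the main obstacle, is the per-component computation. The type-$\mathbf A$ case is transparent once phrased as a linear recurrence, but one must genuinely verify the \emph{direct-sum} structure of the support, i.e.\ that the deleted nodes disconnect the surviving roots, so that the concentration is an orthogonal sum of copies of $\mathbf A_{p-1}$ rather than a larger connected root system. One must also handle the exceptional lattices $\mathbf D_n,\mathbf E_6,\mathbf E_7$ at the small primes $p=2,3$ by hand, which is a finite but case-heavy verification.
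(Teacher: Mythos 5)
Your proof is correct, and it is worth comparing with the paper's, since the two differ in both halves. For the concentration statement the overall plan is the same -- decompose $R$ into ADE summands and classify, summand by summand, the order-$p$ elements of the discriminant group together with the supports of their minimal representatives -- but the execution is not: the paper works with the explicit generator of the cyclic discriminant group of $\mathbf{A}_{pr-1}$ (the class of $\frac{1}{pr}\sum_j je_j$), minimalises its multiples, and dismisses the remaining Dynkin types as ``treated analogously'', whereas you solve the integrality condition $v.e_j\in\ZZ$ as the congruence $\alpha_{j+1}-\alpha_j\equiv\alpha_j-\alpha_{j-1}\pmod p$ on the diagram. This buys uniformity: the type-$\mathbf{A}$ computation and the finite checks for $\mathbf{D}_n$, $\mathbf{E}_7$ (at $p=2$) and $\mathbf{E}_6$ (at $p=3$) -- which you actually carry out, and which I verified are correct, e.g.\ the support $\mathbf{A}_1^{\oplus 3}$ for $\mathbf{E}_7$ and $\mathbf{A}_2^{\oplus 2}$ for $\mathbf{E}_6$ -- all follow from the same recurrence, and your explicit check that the deleted nodes disconnect the support into mutually orthogonal blocks is precisely the point the paper's ``easy to notice'' glosses over. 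For the second half your route is genuinely different and simpler: the paper argues by contradiction, producing a root $w\in\widetilde R'\setminus\widetilde R$, noting that $R[w]$ is a root lattice strictly containing $R$, and invoking primality of $[R':R]$ to force $R[w]=R'$; you instead observe that $R'=R+\ZZ v=R+\widetilde R'$, so if $\widetilde R'$ is generated by norm $-2$ vectors then so is $R'$ (evenness and negative definiteness of $R'$ being automatic for an overlattice), and conclude by contraposition. Your argument never uses primality of the index in this step, so it in fact proves the slightly stronger statement that whenever the overlattice induced on the concentration sublattice is a root lattice, $R'$ itself is one.
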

\begin{proof}
Assume that $R=R_1\oplus \ldots\oplus R_N$ splits as the direct sum of certain ADE lattices. The overlattice $R'$ of $R$ is given by a minimal vector $v\in R^\vee$, and its image $\overline{v}\in A_R =A_{R_1}\times \ldots\times A_{R_N}$ has order $p$, so $\overline{v}=(\overline{v_1},\ldots,\overline{v_N})$ for certain $\overline{v_i}\in A_{R_i}$ of order $p$ (or $\overline{v_i}=0$). Therefore we only need to classify the vectors of order $p$ in $A_R$, where $R$ is an ADE lattice of discriminant multiple of $p$.

Assume $R=\mathbf{A}_{pr-1}$, spanned by $e_1,\ldots,e_{pr-1}$ such that $e_i.e_{i+1}=1$ for each $i$. The discriminant group $A_R$ is cyclic of order $pr$ by Table \ref{ADE}, and an easy verification shows that its generator is the image of $e=\frac{1}{pr}(\sum_{j=1}^{pr-1}{je_j})\in R^\vee$. All vectors of order $p$ in $A_R$ are the images of $mr\cdot e\in R^\vee$, for $m\in \{1,\ldots,p-1\}$. It is easy to notice that, after minimalising $mr\cdot e$, its coefficients are zero precisely in the positions $p,2p,\ldots,(r-1)p$. Thus $mr\cdot e\in R'$ is concentrated at the curves $\{e_i : p\nmid i\}$, that span a sublattice of $R$ isometric to $\mathbf{A}_{p-1}^{\oplus r}$.

The cases $R=\mathbf{D}_n$ and $R=\mathbf{E}_6, \ \mathbf{E}_7, \ \mathbf{E}_8$ are treated analogously.

For the second part of the statement, assume that $R'$ is not a root lattice, and that it is given by a minimal vector $v=\frac{1}{p}(\alpha_1e_1+\ldots+\alpha_re_r)\in R^\vee$. $R'$ is concentrated at a sublattice $\widetilde{R}\subseteq R$ isometric to $\mathbf{A}_{p-1}^{\oplus r}$, hence $v\in \widetilde{R}^\vee$ induces an overlattice $\widetilde{R}'$ of $\widetilde{R}$ of index $p$. If by contradiction $\widetilde{R}'$ is a root lattice, then there exists a root $w$ in $\widetilde{R}'$ that is not in $\widetilde{R}$. The root $w$ belongs to $R'$ as well, since it is an integral linear combination of some of the $C_i$ and $v$. Moreover $w$ does not belong to $R$, since otherwise it would be in $\widetilde{R}=\widetilde{R}' \cap R$. We obtain that $w$ is a root in $R'\setminus R$, and hence $R[w]$ is a root lattice such that $R\subsetneq R[w] \subseteq R'$. As the index of $R\subseteq R'$ is prime, we conclude that $R'=R[w]$ is a root lattice, contradicting the assumption.
\end{proof}

\begin{remark} \label{rk:geometric}
We can interpret Proposition \ref{prop:conc} in a more geometric way, at least when $R$ is a root lattice generated by $(-2)$-curves on a K3 surface $X$. If the embedding $R\hookrightarrow \Pic(X)$ is not primitive, then the saturation $R_{sat}$ contains an overlattice $R'\subseteq\Pic(X)$ of $R$ of index $p$ prime. We denote by $X_0$ the singular K3 surface obtained from $X$ by contracting all the $(-2)$-curves in $R$ where the overlattice $R'$ is concentrated, and by $t_1,\ldots,t_r$ the resulting singular points of $X_0$ (notice that we are able to make the contraction since $R$ is negative definite \cite[Theorem~2.3]{artin.contractability}). The fact that $R'\subseteq \Pic(X)$ implies by \cite[\S1.1]{KeumZhang} that there exists a cyclic $(p:1)$-cover $X'\to X_0$ branched precisely over the singular points $t_1,\ldots,t_r\in X_0$, where $X'$ is smooth. Since the image of a smooth ramification point of a cyclic $(p:1)$-cover is a singular point of type $A_{p-1}$ (see \cite[Proposition III.5.3]{BarthHulek}), we conclude that $R'$ is concentrated at a root lattice isometric to $\mathbf{A}_{p-1}^{\oplus r}$.
\end{remark}

\begin{lemma}\label{lem:overnonroot}
Let $R=\mathbf{A}_{p-1}^{\oplus r}$ for some prime $p$ and $r\ge 1$, and assume that $R$ admits an overlattice (resp. an overlattice that is not a root lattice).
We have:
\begin{itemize}
    \item $\rk R\ge 4$ (resp.\ $\rk R\ge 8$) when $p=2$;
    \item $\rk R\ge 6$ (resp.\ $\rk R\ge 12$) when $p=3$;
    \item $\rk R\ge 8$ (resp.\ $\rk R\ge 16$) when $p\ge 5$.
\end{itemize}
\end{lemma}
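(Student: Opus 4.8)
The plan is to work entirely inside the discriminant group $A_R\cong(\Z/p)^{r}$, which carries the quadratic form inherited from $\mathbf A_{p-1}$, and to translate both assertions into statements about the ``support size'' of isotropic vectors. Write a class as $v=(k_1,\dots,k_r)$ with $k_i\in\{0,\dots,p-1\}$, and let $t=\#\{i:k_i\neq 0\}$ be its support size. A direct computation of the glue vectors of $\mathbf A_{p-1}$ shows that the minimal-norm preimage $w_v$ of $v$ in $R^{\vee}$ has norm $n(v)=-\tfrac1p\sum_i f(k_i)$, where $f(a)=a(p-a)$; in particular $n(v)$ represents $q_R(v)$ in $\QQ/2\Z$, so $v$ spans an isotropic subgroup precisely when $n(v)$ is an even integer, and then (for $v\neq 0$) $n(v)\le -2$. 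Moreover, by \Cref{prop:non_root_overlattice}, the index-$p$ overlattice $R[w_v]$ is a root lattice if and only if some generator $mv$, $m\in(\Z/p)^{\times}$, admits a preimage of norm $-2$, which happens exactly when $n(mv)=-2$.

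For the first assertion I would reduce to an overlattice of index $p$ — any nontrivial isotropic subgroup of $A_R$ contains one of order $p$ — and then bound $t$ from below by an elementary analysis modulo $p$. A nonzero isotropic $v$ forces $p\mid\sum_i k_i^2$ when $p$ is odd, and forces the support size to be divisible by $4$ when $p=2$. Since a single nonzero square is never $\equiv 0$, and every nonzero square is $\equiv 1\pmod 3$, this yields $t\ge 2$ for $p\ge 5$, $t\ge 3$ for $p=3$, and $t\ge 4$ for $p=2$. As $\rk R=r(p-1)\ge t(p-1)$, the three bounds $4$, $6$, $8$ for $p=2,3,\ge 5$ follow immediately.

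The crux is the second assertion, and the key step is an averaging estimate. First I would reduce to a non-root overlattice of index $p$: the roots of an arbitrary overlattice $R'$ with isotropic group $H$ are exactly the roots of $R$ together with the minimal preimages of the norm-$(-2)$ classes $H_2=\{h\in H:n(h)=-2\}$, so they generate $R'$ if and only if $H=\langle H_2\rangle$. If $R'$ is not a root lattice, pick $h\in H\setminus\langle H_2\rangle$; then $n(mh)\neq -2$ for every $m\in(\Z/p)^{\times}$ (otherwise $mh\in H_2$ would place $h$ in the $\BF_p$-span $\langle H_2\rangle$), so $R[w_h]$ is a non-root overlattice of index $p$. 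For such a $v$ the non-root condition reads $\sum_i f((mk_i)')\ge 4p$ for every $m$, where $(mk_i)'$ denotes the representative of $mk_i$ in $\{0,\dots,p-1\}$. Summing over $m\in(\Z/p)^{\times}$, and using that for each fixed nonzero coordinate $m\mapsto (mk_i)'$ permutes $\{1,\dots,p-1\}$, together with $\sum_{a=1}^{p-1}a(p-a)=\tfrac{p(p^2-1)}6$, the estimate becomes
\[
 t\cdot\frac{p(p^2-1)}{6}\;=\;\sum_{m\in(\Z/p)^{\times}}\ \sum_{i}\, f\big((mk_i)'\big)\;\ge\;(p-1)\cdot 4p ,
\]
whence $t\ge \tfrac{24}{p+1}$ and $\rk R=r(p-1)\ge t(p-1)\ge \tfrac{24(p-1)}{p+1}$. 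This last quantity equals $8$ at $p=2$ and $12$ at $p=3$, and is $\ge 16$ exactly when $p\ge 5$, matching the three claimed bounds.

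I expect the delicate point to be the reduction from arbitrary to prime index in the second part, which requires the precise description of the roots of an overlattice; if one states the lemma only for index-$p$ overlattices — which is all that \Cref{prop:conc} requires downstream — this step is unnecessary and the proof collapses to the averaging computation. The averaging itself is short, but its logic is what makes everything work: the grand total $\sum_{m,i}f((mk_i)')$ is fixed and grows linearly in $t$, while non-rootness forces each of the $p-1$ inner sums to be at least $4p$, and these two facts together pin $t$ from below.
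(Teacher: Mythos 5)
Your proof is correct, and it takes a genuinely different route from the paper's. The paper argues by explicit casework on small primes: for $p=2$ it describes the isotropic vectors of $A_{\mathbf{A}_1^{\oplus r}}$ concretely (sums of $4m$ of the standard generators, whose preimages have norm $\le -2m$), asserts the analogous exact thresholds for $\mathbf{A}_2^{\oplus r}$ and $\mathbf{A}_4^{\oplus r}$ with a ``similarly one can check'', and disposes of all larger primes by noting that $\mathbf{A}_6$, $\mathbf{A}_6^{\oplus 2}$, $\mathbf{A}_{10}$ and $\mathbf{A}_{12}$ admit no overlattices at all. You instead give a uniform argument valid for every prime at once: elementary congruences ($p\mid\sum_i k_i^2$ for odd $p$, and $4\mid t$ for $p=2$) for the first assertion, and the averaging over $(\Z/p)^{\times}$ for the second, which condenses all cases into the single bound $\rk R\ge 24(p-1)/(p+1)$, taking exactly the values $8$, $12$, $16$ at $p=2,3,5$ and increasing beyond. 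Your reduction from non-root overlattices of arbitrary index to prime index --- via the description of the roots of $R'$ as the roots of $R$ together with the minimal preimages of the classes $h$ with $n(h)=-2$, so that $R'$ is a root lattice if and only if $H=\langle H_2\rangle$ --- is also spelled out more explicitly than in the paper, which treats general isotropic subgroups only implicitly through \Cref{prop:non_root_overlattice}; and, as you note, this step could be skipped entirely if the lemma were stated for prime index, which is all that \Cref{prop:conc} and its applications require. As for what each approach buys: the paper's computation yields sharp ``if and only if'' thresholds for $p=2,3,5$ as a by-product (strictly more than the lemma claims), while yours eliminates case distinctions, is self-contained for every $p$, and makes transparent why the three numerical thresholds are what they are; the only external input you need --- that the minimal norm in the glue class $k$ of $\mathbf{A}_{p-1}$ is $-k(p-k)/p$, so that $n(v)$ is indeed the maximal norm in its coset --- is classical and carries the whole identification of root overlattices with the condition $n(mv)=-2$.
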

\begin{proof}
Let $v\in A_R$ be an isotropic vector, giving the overlattice $R'$ of $R$. By \Cref{prop:non_root_overlattice}, $R'$ is a root lattice if and only if there is a preimage of $v$ in $R^\vee$ of norm $-2$.

Consider first $R=\mathbf{A}_1^{\oplus r}$. The discriminant group $A_R$ is isomorphic to $(\ZZ/2)^{\oplus r}$, and a set of generators for $A_R$ is given by orthogonal vectors $v_1,\ldots,v_r$ of norm $-\frac{1}{2} \pmod{2}$. Thus the isotropic vectors in $A_R$ are the sum of $4m$ vectors among the $v_i$, for some $m\ge 1$. 
We deduce that $R$ has an overlattice if and only if $r\ge 4$.
Moreover, since the norm of any preimage of the $v_i$ in $R^\vee$ is $\le - \frac{1}{2}$, any preimage of the vector $v=\sum_{i=1}^{4m}{v_i}$ has norm $\le -2m$. 
In particular $R$ admits an overlattice that is not a root lattice if and only if $r\ge 8$.

Similarly one can check that $\mathbf{A}_2^{\oplus r}$ admits an overlattice (resp. an overlattice that is not a root lattice) if and only if $r\ge 3$ (resp. $r\ge 6$), and $\mathbf{A}_4^{\oplus r}$ admits an overlattice (resp. an overlattice that is not a root lattice) if and only if $r\ge 2$ (resp. $r\ge 4$).
We conclude by noticing that the root lattices $\mathbf{A}_6$, $\mathbf{A}_6^{\oplus 2}$, $\mathbf{A}_{10}$ and $\mathbf{A}_{12}$ have no overlattice.

\end{proof}

We conclude the section with an easy application of \Cref{prop:conc} and \Cref{lem:overnonroot}.

\begin{proposition} \label{prop:overlattice_ge8}
If $R$ is a root lattice admitting an overlattice that is not a root lattice, then $\rk R\ge 8$.
\end{proposition}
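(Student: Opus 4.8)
The plan is to reduce the statement to the prime-index case already settled in \Cref{prop:conc} and \Cref{lem:overnonroot}. Suppose $R$ is a root lattice admitting an overlattice $R''$ that is not a root lattice. The quotient $R''/R$ is a finite abelian group which, under the correspondence between overlattices of $R$ and isotropic subgroups of $A_R$, is an isotropic subgroup. Since the restriction of $q_R$ to any subgroup of an isotropic subgroup is again identically zero, every subgroup of $R''/R$ is isotropic and hence corresponds to an even overlattice of $R$. I would therefore refine a composition series of $R''/R$ into a chain $R=R_0\subsetneq R_1\subsetneq\cdots\subsetneq R_m=R''$ of overlattices in which every successive step $R_{i-1}\subsetneq R_i$ has prime index.

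Along this chain $R_0=R$ is a root lattice whereas $R_m=R''$ is not, so there is a smallest index $j$ for which $R_j$ fails to be a root lattice. Then $\widetilde{R}:=R_{j-1}$ is a root lattice and $R_j$ is an overlattice of $\widetilde{R}$ of prime index $p$ that is not a root lattice. Applying \Cref{prop:conc} to the pair $\widetilde{R}\subset R_j$ shows that a sublattice of $\widetilde{R}$ isometric to $\mathbf{A}_{p-1}^{\oplus r}$ itself admits an overlattice that is not a root lattice. \Cref{lem:overnonroot} then forces $\rk\big(\mathbf{A}_{p-1}^{\oplus r}\big)\ge 8$ in every case, since the respective thresholds there are $8$ for $p=2$, $12$ for $p=3$, and $16$ for $p\ge 5$. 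As $\mathbf{A}_{p-1}^{\oplus r}$ is a sublattice of $\widetilde{R}$ and passing to an overlattice preserves rank, I conclude $\rk R=\rk\widetilde{R}\ge\rk\big(\mathbf{A}_{p-1}^{\oplus r}\big)\ge 8$.

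Essentially all of the work is carried by the two preceding results, so the only genuine step is the reduction to prime index, and the main point to verify there is that truncating the chain at the first step where the root-lattice property breaks really does produce a prime-index overlattice of a \emph{root} lattice, to which \Cref{prop:conc} applies verbatim. Once that is in place, the conclusion follows from a short bookkeeping on ranks.
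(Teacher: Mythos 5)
Your argument is correct and is essentially the paper's own proof: both reduce the statement to the case of a prime-index overlattice that is not a root lattice, of a root lattice of the same rank as $R$, and then conclude immediately from \Cref{prop:conc} and \Cref{lem:overnonroot}. The only difference is in how that reduction is effected --- the paper passes to the sublattice $R'_{root}$ spanned by the roots of the overlattice and takes a single prime-index step above it (arguing that any such step is not a root lattice because all its roots lie in $R'_{root}$), whereas you refine the extension into a chain of prime-index overlattices and truncate at the first step where the root-lattice property fails, which makes non-rootness of the intermediate overlattice hold by construction; both devices are valid and the remaining bookkeeping on ranks is identical.
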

\begin{proof}
Let $R$ be a root lattice and $R'$ an overlattice of $R$ that is not a root lattice.
Consider the lattice $R'_{root}$ spanned by the roots of $R'$. We have the inclusions $R\subseteq R'_{root}\subsetneq R'$. Choose any lattice $R''$ such that $R'_{root}\subsetneq R''\subseteq R'$ and the index $[R'':R'_{root}]$ is prime. By construction $R''$ is not a root lattice, hence up to exchanging $R$ with $R_{root}'$ and $R'$ with $R''$, we may assume in the statement that the index $[R':R]$ is a prime $p$.
By \Cref{prop:conc} the overlattice $R'$ is concentrated at a sublattice $\widetilde R \subseteq R$ isometric to $\mathbf{A}_{p-1}^{\oplus r}$, and $\widetilde R$ admits an overlattice that is not a root lattice as well. It then suffices to prove the statement for the root lattices of the form $\mathbf{A}_{p-1}^{\oplus r}$ with $p$ prime, and this follows from \Cref{lem:overnonroot}.
\end{proof}

\section{Exceptional and over-exceptional lattices}\label{sec:exceptional}
This section introduces and proves properties of the central object of this article, the \emph{over-exceptional} lattice.

\begin{definition}\label{def:exceptional}
 Let $X$ be a minimal algebraic surface of Kodaira dimension $0$ over an algebraically closed field. The \emph{over-exceptional lattice} $E'(X)$ (resp. \emph{exceptional lattice } $E(X)$) of $X$ is the sublattice of $\Num(X)$ spanned by the classes of the irreducible curves on $X$ which are orthogonal to the fibres of every elliptic fibration (resp. every elliptic fibration with infinite stabilizer in $\Aut(X)$) on $X$.
\end{definition}

The irreducible curves contained in fibres of a relatively minimal elliptic fibration are either curves of arithmetic genus $1$, if the fibre is irreducible, or smooth rational curves, if the fibre is reducible. By the genus formula they have square $0$ and $-2$ respectively. Since an irreducible curve of arithmetic genus $1$ is only orthogonal to the elliptic fibration induced by itself, the over-exceptional (resp. exceptional) lattice is generated by classes of $(-2)$-curves as soon as $X$ contains two distinct elliptic fibrations (resp. two distinct elliptic fibrations with infinite stabilizer in $\Aut(X)$).

\begin{remark}\label{rem:ENeqE}
Obviously $E'(X)\subseteq E(X)$. The exceptional lattice $E(X)$ of a K3 surface $X$ over an algebraically closed field of characteristic $0$ has been studied previously by Nikulin in \cite{Nikulin}. His definition \cite[\S4, p.\ 258]{Nikulin} is slightly different from \Cref{def:exceptional} but agrees when $\rho(X)\geq 6$. Indeed, when $X$ has an elliptic fibration with infinite stabiliser (or equivalently when $X$ has infinite automorphism group, see \cite[Theorem~3.1]{Nikulin}), this follows from \cite[Theorem~4.1]{Nikulin}; otherwise, Nikulin's exceptional lattice and $E(X)$ are equal to $\Pic(X)$ by \cite[Definition~4.5]{Nikulin}.
\end{remark}

We will be mainly interested in studying the over-exceptional lattices of Enriques and K3 surfaces, in view of applications towards the Hilbert Property. 

\begin{example} \label{ex:kummer}
 \begin{enumerate}
  \item Let $S$ be an Enriques surface with finite automorphism group over an algebraically closed field of characteristic different from $2$. These have been classified into seven types by Kond\=o and Martin in \cite{Kondo, Martin}, together with their $(-2)$-curves and elliptic fibrations. Going through the list, one checks that $E'(S)=0$ while $E(S)=\Num(S)$.
  \item Let $X$ be the Kummer surface associated to the product of two non-isogenous elliptic curves over an algebraically closed field of characteristic $0$. Then it is shown in \cite[Remark~6.3(iii)]{Yu} that $E(X)\cong\BZ^8$. However, using an extra elliptic fibration from \cite[Lemma~1.1]{KuwataWang} with finite stabiliser, it follows that $\rk E'(X)\leq 6$.
 \end{enumerate}
\end{example}

\begin{lemma} \label{lemma:overexceptionalE6}
Let $S$ be an Enriques surface over an algebraically closed field of characteristic different from $2$. The over-exceptional lattice $E'(S)$ of $S$ is a root lattice and it embeds into $\mathbf{E}_6$.
\end{lemma}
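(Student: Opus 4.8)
The plan is to realise $E'(S)$ as a sublattice of the orthogonal complement of a configuration of half-fibres, and to identify that complement with $\mathbf{E}_6$.

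First, I note that $E'(S)$ is generated by $(-2)$-curves. Any Enriques surface carries several distinct elliptic fibrations — for instance the pencils $|2F_1|,\dots,|2F_4|$ attached to a $4$-sequence of half-fibres, which exists by \Cref{thm:four} — so by the discussion following \Cref{def:exceptional} the generators of $E'(S)$ are smooth rational curves of square $-2$, i.e.\ roots. (An irreducible curve of square $0$ is orthogonal only to its own fibration, while one of positive square is orthogonal to no isotropic class by the Hodge index theorem.) Thus, once negative-definiteness is established, $E'(S)$ is automatically even, non-degenerate, and generated by roots, hence a root lattice.

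Second, fix four half-fibres $F_1,\dots,F_4$ forming a $4$-sequence as in \Cref{thm:four}, so that $F_i.F_j=1-\delta_{ij}$, and set $M=\langle F_1,\dots,F_4\rangle\subseteq\Num(S)$. By definition every generator of $E'(S)$ is orthogonal to each $F_i$, whence $E'(S)\subseteq M^{\perp}$. The Gram matrix of $M$ has zeros on the diagonal and ones off it, with eigenvalues $3,-1,-1,-1$; hence $M$ has signature $(1,3)$ and discriminant $3$. Since $3$ is squarefree, the index formula \eqref{eq:over} forces $M$ to be primitive in $\Num(S)\cong\mathbf{U}\oplus\mathbf{E}_8$. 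As $\Num(S)$ is even, unimodular of signature $(1,9)$, the complement $M^{\perp}$ is an even, negative-definite lattice of rank $6$, and $E'(S)\subseteq M^\perp$ is negative definite.

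Third — and this is the crux — I claim $M^{\perp}\cong\mathbf{E}_6$. Computing the dual basis of $M$ shows that $A_M\cong\ZZ/3$ is generated by a class of norm $-\tfrac{2}{3}\bmod 2\ZZ$. Since $M$ is primitively embedded in the unimodular lattice $\Num(S)$, Nikulin's theory gives $q_{M^{\perp}}=-q_M$, so $A_{M^{\perp}}\cong\ZZ/3$ with a generator of norm $+\tfrac{2}{3}$ — exactly the discriminant form of $\mathbf{E}_6$ recorded in \Cref{ADE}. As $\mathbf{E}_6$ is the unique lattice in its genus, $M^{\perp}\cong\mathbf{E}_6$. (Alternatively, one may transport $(F_1,\dots,F_4)$ to a fixed isotropic $4$-tuple by an isometry of $\Num(S)$, exactly as in the proof of \Cref{thm:extendability}, and compute the complement of the standard model directly.) Composing the inclusion $E'(S)\hookrightarrow M^{\perp}$ with an isomorphism $M^{\perp}\cong\mathbf{E}_6$ yields the desired embedding.

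The main obstacle is the identification $M^{\perp}\cong\mathbf{E}_6$ in the third step; everything else is formal once the $4$-sequence has been chosen. The delicate point is that for definite lattices an isomorphism is not forced by the discriminant form alone, so one must either invoke the classical fact that $\mathbf{E}_6$ stands alone in its genus, or exhibit an explicit isometry to a standard $4$-sequence and compute the orthogonal complement by hand.
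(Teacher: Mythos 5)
Your proof is correct, and its skeleton coincides with the paper's: both choose a $4$-sequence of half-fibres via \Cref{thm:four}, observe that $E'(S)$ lands inside the orthogonal complement $M^{\perp}$ of the span $M=\langle F_1,\dots,F_4\rangle$ in $\Num(S)\cong\mathbf{U}\oplus\mathbf{E}_8$ (which also gives negative-definiteness, hence the root-lattice claim), and then identify $M^{\perp}\cong\mathbf{E}_6$. Where you differ is in how this identification is carried out, and it is a genuinely different route for the crux. The paper first splits $M\cong\mathbf{U}\oplus\mathbf{A}_2$ explicitly (with $\mathbf{U}=\langle F_1,F_2\rangle$ and the $\mathbf{A}_2$ spanned by $F_1+F_2-F_3$ and $F_3-F_4$), invokes Nikulin's uniqueness theorem for primitive embeddings \cite[Theorem~1.14.2]{Nikulin_3} to reduce to exhibiting a single embedding $\mathbf{A}_2\hookrightarrow\mathbf{E}_8$, and produces that embedding by showing the index-$3$ overlattice of $\mathbf{A}_2\oplus\mathbf{E}_6$ is unimodular and hence isometric to $\mathbf{E}_8$. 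You instead keep $M$ whole, prove primitivity of $M\subseteq\Num(S)$ from squarefreeness of $\mathrm{disc}(M)=3$ via \eqref{eq:over}, use the relation $q_{M^{\perp}}=-q_M$ for primitive sublattices of even unimodular lattices, and conclude from the fact that $\mathbf{E}_6$ is alone in its genus. Each route rests on one classical but non-formal input: the paper's on uniqueness of primitive embeddings, yours on the class-number-one property of $\mathbf{E}_6$ — and you correctly flag that for definite lattices the discriminant form alone does not determine the isometry class, so this input cannot be waved away. A small bonus of your version: the squarefree-discriminant argument makes the primitivity of $M$ in $\Num(S)$ explicit, a hypothesis that is needed to apply either Nikulin result and that the paper leaves implicit.
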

\begin{proof}
Let $F_1,F_2,F_3,F_4$ be half-fibres on $S$ forming a $4$-sequence as in \Cref{thm:four}. The lattice $\langle F_1,F_2,F_3,F_4\rangle$ spanned by the four half-fibres is isometric to $\mathbf{U}\oplus \mathbf{A}_2$ (if $\mathbf{U}$ is generated by $F_1$ and $F_2$, its orthogonal complement, spanned by $F_1+F_2-F_3$ and $F_3-F_4$, is isometric to $\mathbf{A}_2$). 
We claim that the orthogonal complement in $\Num(S)$ of the lattice $\langle F_1,F_2,F_3,F_4\rangle$ is isometric to $\mathbf{E}_6$. Indeed the isometry class of the orthogonal complement does not depend on the embedding $\mathbf{U}\oplus \mathbf{A}_2 \hookrightarrow \Num(S)=\mathbf{U}\oplus \mathbf{E}_8$ by \cite[Theorem 1.14.2]{Nikulin_3}, hence we only need to exhibit an embedding $\mathbf{A}_2\hookrightarrow \mathbf{E}_8$, whose orthogonal complement is isometric to $\mathbf{E}_6$. In order to achieve this, notice that there is an isotropic vector of order $3$ in the discriminant group of $\mathbf{A}_2\oplus \mathbf{E}_6$, so there is an overlattice $L$ of $\mathbf{A}_2\oplus \mathbf{E}_6$ of discriminant $1$. Since $\mathbf{E}_8$ is the unique even, negative definite, unimodular lattice of rank $8$, we have that $L$ is isometric to $\mathbf{E}_8$, and this gives the desired primitive embedding $\mathbf{A}_2\hookrightarrow \mathbf{E}_8$.

Now by definition the curves in the over-exceptional lattice are orthogonal to the $F_i$, so we obtain an embedding $E'(S)\hookrightarrow \langle F_1,F_2,F_3,F_4\rangle^\perp =\mathbf{E}_6$. In particular $E'(S)$ is negative definite, and since it is spanned by $(-2)$-curves, we deduce that $E'(S)$ is a root lattice.
\end{proof}

In the following let $S$ be an Enriques surface over an algebraically closed field of characteristic different from $2$. We choose by \Cref{thm:four} four half-fibres forming a $4$-sequence $(F_1,F_2,F_3,F_4)$ on $S$, and by \Cref{thm:extendability} we extend it to a $10$-sequence 
\[(F_1,F_2,F_3,F_4,E_1,\ldots,E_6).\]
The $E_i$ are either half-fibres meeting $F_1,\ldots,F_4$ with multiplicity $1$, or divisors of the form $F+D$, with $F$ a half-fibre in the $10$-sequence and $D$ a chain of $(-2)$-curves, contained in the tail of $F$.

In particular the $10$-sequence contains at most six $(-2)$-curves. Some of them meet a half-fibre in the $10$-sequence with multiplicity $1$, while the other ones, of the form $E_{j+1}-E_j$ for some $1\le j\le 5$, are orthogonal to all the half-fibres in the $10$-sequence.

We denote $H=\frac{1}{3}(F_1+\ldots+F_4+E_1+\ldots+E_6)\in \Num(S)$. The divisor $H$ is integral by \cite[Lemma~1.6.2]{Cossec}.

\begin{lemma} \label{lemma:possible_curves}
Any curve $C$ in the over-exceptional lattice $E'(S)$ of $S$ coincides with one of the following divisors:
\begin{enumerate}
    \item $R_j := E_{j+1}-E_j$;
    \item $R_{j_1,j_2,j_3} := H-E_{j_1}-E_{j_2}-E_{j_3}$, and $R_{j_1,j_2,j_3}.E_j=1$ if $j\in \{j_1,j_2,j_3\}$, while $R_{j_1,j_2,j_3}.E_j=0$ otherwise;
    \item $R := 2H - \sum_{j=1}^6{E_i}$, and $R.E_j = 1$ for each $1\le j\le 6$.
\end{enumerate}
Moreover
$$R.R_j=0, \quad R.R_{j_1,j_2,j_3}=-1, \quad R_j.R_{j'} = \begin{cases}
0 &\text{if } |j-j'|\ge 2\\
1 &\text{if } |j-j'|=1
\end{cases}$$
$$R_{j_1,j_2,j_3}.R_j = \#\{j+1,j_1,j_2,j_3\}-\#\{j,j_1,j_2,j_3\}$$
$$R_{j_1,j_2,j_3}.R_{j_4,j_5,j_6}=\#\{j_i\mid i=1,\dots,6\}-5$$
\end{lemma}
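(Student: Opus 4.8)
The plan is to reduce the classification to an enumeration of roots in the orthogonal complement of the half-fibre $4$-sequence, performed in explicit coordinates, and then to cut the list down using effectivity.

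First I would record that, since $S$ carries at least two elliptic fibrations, $E'(S)$ is spanned by $(-2)$-curves, so any curve $C\in E'(S)$ is an irreducible $(-2)$-curve, and by \Cref{lemma:overexceptionalE6} it lies in $\langle F_1,F_2,F_3,F_4\rangle^\perp\cong\mathbf{E}_6$; in particular $C$ is a root orthogonal to $F_1,\dots,F_4$. Next I would coordinatise via the whole $10$-sequence. Relabelling it as $G_1,\dots,G_{10}=F_1,\dots,F_4,E_1,\dots,E_6$, we have $G_i\cdot G_j=1-\delta_{ij}$, so the Gram matrix is $J-I$ with determinant $-9$; hence the $G_i$ form a $\mathbb{Q}$-basis of $\Num(S)\otimes\mathbb{Q}$ spanning an index-$3$ sublattice of the unimodular lattice $\Num(S)\cong\mathbf{U}\oplus\mathbf{E}_8$, whose gap is filled by $H=\tfrac13\sum_i G_i$, with $H\cdot G_k=3$ and $H^2=10$. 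Writing $C=\sum_{k}x_kF_k+\sum_{m}y_mE_m$ and setting $\sigma=\sum_k x_k+\sum_m y_m$, bilinearity gives $C\cdot F_l=\sigma-x_l$ and $C\cdot E_n=\sigma-y_n$.

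I would then impose the three constraints. Orthogonality $C\cdot F_l=0$ forces $x_l=\sigma$ for all $l$, whence $\sum_m y_m=-3\sigma$. Integrality of $C$ against the lattice vectors $H$ and $E_n$ gives $C\cdot H=3\sigma\in\mathbb{Z}$ and $y_n\equiv\sigma\pmod{\mathbb{Z}}$. Finally $C^2=\sigma^2-\sum_k x_k^2-\sum_m y_m^2=-2$ simplifies, using $x_l=\sigma$, to
\[
3\sigma^2+\sum_{m=1}^6 y_m^2=2,\qquad \sum_{m=1}^6 y_m=-3\sigma.
\]
From $3\sigma^2\le 2$ and $3\sigma\in\mathbb{Z}$ we get $\sigma\in\{0,\pm\tfrac13,\pm\tfrac23\}$, and solving in each case (using $y_n\equiv\sigma$) pins down the solutions: $\sigma=\pm\tfrac23$ forces every $y_m=\mp\tfrac13$, giving $\pm R$; $\sigma=\pm\tfrac13$ forces exactly three $y_m=\mp\tfrac23$ and the rest $\pm\tfrac13$, giving $\pm R_{j_1,j_2,j_3}$; and $\sigma=0$ forces two of the $y_m$ to be $+1$ and $-1$ with the rest $0$, giving $\pm(E_m-E_{m'})$ for some $m\ne m'$.

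To finish I would invoke effectivity: by Riemann--Roch $C$ or $-C$ is effective, and since $C$ is a curve it is effective, which discards the negative-$\sigma$ families and the negative orientation in the $\sigma=0$ case, leaving $R$, the $R_{j_1,j_2,j_3}$, and the differences $E_m-E_{m'}$ with $m>m'$. The only remaining point—and the main obstacle—is to show such a difference forces $m=m'+1$, so that $C=R_{m-1}$: here I would use that $E_m$ is effective (a half-fibre, or a sum $F+D$ of a half-fibre with tail $(-2)$-curves) together with $C\cdot E_m=-1<0$, so that the irreducible curve $C$ must be a component of $E_m$, and the only $(-2)$-component of $E_m$ whose class has the form $E_m-E_{m'}$ is the last tail curve $E_m-E_{m-1}$. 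This step is delicate precisely because it is where irreducibility (rather than just the root condition) is needed, and it leans on the structural description of the extended $10$-sequence recalled before the lemma. The displayed intersection numbers are then a direct, if tedious, computation from $G_i\cdot G_j=1-\delta_{ij}$, $H\cdot G_k=3$ and $H^2=10$, which I would treat as routine verification.
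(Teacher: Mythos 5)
Your coordinate enumeration (writing $C=\sum_k x_kF_k+\sum_m y_mE_m$, imposing orthogonality to the $F_k$, integrality against $H$ and the $E_n$, and $C^2=-2$) is correct, and it is a clean alternative to the paper's route: the paper never enumerates in coordinates but instead proves geometrically that $0\le C.E_j\le 1$ for $C$ not in the $10$-sequence and then uses uniqueness of classes with prescribed intersection numbers. However, your final step contains a genuine gap. You discard the classes $-R$, $-R_{j_1,j_2,j_3}$ and the reversed differences by saying that, by Riemann--Roch, $C$ or $-C$ is effective and $C$ is a curve, hence effective. But Riemann--Roch only tells you that \emph{one} of the two classes $\pm D$ is effective; it does not tell you \emph{which}, so knowing $C$ is effective discards nothing by itself. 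What effectivity of the $E_j$ plus irreducibility of $C$ actually gives is the dichotomy: either $C.E_j\ge 0$, or $C$ is a component of $E_j$. To exclude $C=-R$ (which has $C.E_j=-1$ for all $j$) you must rule out the component scenario, and that requires the geometric input your proof never supplies: writing $E_j=F^{(j)}+D^{(j)}$ (half-fibre plus tail; if $E_j$ is itself a half-fibre then $C.E_j=0$ outright since $C\in E'(S)$), orthogonality of $C$ to \emph{every} elliptic fibration gives $C.F^{(j)}=0$, hence $C.D^{(j)}=-1<0$, hence $C$ is a component of the tail $D^{(j)}$, i.e.\ $C$ is a tail curve of the $10$-sequence; but tail curves lying in $E'(S)$ have classes $E_{j+1}-E_j\neq -R$. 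This observation --- an irreducible $C\in E'(S)$ that is not a tail curve satisfies $C.E_j\ge 0$ for all $j$ --- is precisely the first step of the paper's proof, and it is what kills all the negative candidates at once.

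The same missing argument also undermines your $\sigma=0$ step, which you yourself flag as the main obstacle: you assert that ``the only $(-2)$-component of $E_m$ whose class has the form $E_m-E_{m'}$ is the last tail curve,'' but give no reason why a component of the half-fibre part $F^{(m)}$ of $E_m$ could not represent such a class. Again the fix is the decomposition above: $C.E_m=-1$ together with $C.F^{(m)}=0$ forces $C.D^{(m)}<0$, so $C$ is a tail curve, and linear independence of the ten divisors in the sequence then forces $m'=m-1$. So your proof is repairable, and its lattice-theoretic half even spares you the paper's fibre-type argument bounding $C.E_j\le 1$; but as written it omits the one geometric fact (orthogonality to half-fibres controlling intersections with the $E_j$) on which the whole lemma turns, and replaces it with an appeal to Riemann--Roch that does not do the required work.
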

\begin{proof}
If $C$ is a $(-2)$-curve in the $10$-sequence, necessarily it coincides with one of the $R_j$, since it is orthogonal to all the half-fibres in the $10$-sequence. Hence we may assume that $C$ does not belong to the $10$-sequence, and in particular that $C.E_j\ge 0$ for each $1\le j\le 6$.

We claim that $0\le C.E_j \le 1$ for each $1\le j\le 6$. If $E_j$ is a half-fibre, then $C.E_j=0$ by definition. Otherwise we can write $E_j = F + D$, with $F$ a half-fibre in the $10$-sequence and $D$ a chain of $(-2)$-curves orthogonal to at least three of the $F_i$, say $F_1$, $F_2$ and $F_3$. In particular there exists a fibre $G_1\in |2F_1|$ containing the chain $D$. Since by assumption $C$ is contained in a fibre of $|2F_1|$, necessarily $C.E_j=C.D\le 2$. Moreover, if $C.D=2$, then $C$ forms an elliptic fibre of type $I_m$ together with the curves in $D$, and this fibre must coincide with $G_1$. But then $G_1.F_2 = (C+D).F_2 = 0$, a contradiction.

Therefore $C.E_j\in \{0,1\}$ for each $1\le j\le 6$. Since $C.H$ is an integer, $C$ intersects either $3$ or $6$ of the $E_i$. Since the divisors in the $10$-sequence span $\Num(S)$ over $\BQ$, there is at most one divisor in $\Num(S)$ with prescribed intersections with the $F_i$ and $E_j$. It is then easy to check that the vectors $R_{j_1,j_2,j_3}$ and $R$ have self-intersection $-2$ and the desired intersection numbers with the $F_i$ and $E_j$.
\end{proof}

\begin{proposition} \label{prop:small_overexceptional}
Let $S$ be an Enriques surface over an algebraically closed field of characteristic different from $2$. The over-exceptional lattice $E'(S)$ of $S$ does not contain the following configurations of $(-2)$-curves:
\begin{itemize}
    \item four disjoint $(-2)$-curves;
    \item six $(-2)$-curves spanning a lattice isometric to $\mathbf{A}_2^{\oplus 3}$;
    \item six $(-2)$-curves spanning a lattice isometric to $\mathbf{E}_6$.
\end{itemize}
\end{proposition}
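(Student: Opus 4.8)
The plan is to combine two elementary principles with the classification of possible over-exceptional curves in \Cref{lemma:possible_curves}. The first principle is that two distinct irreducible curves meet non-negatively; reading off the intersection numbers in \Cref{lemma:possible_curves}, this already forbids several pairs of the vectors $R_j$, $R_{j_1,j_2,j_3}$, $R$ from being simultaneously realised as curves (for instance $R$ and any $R_{j_1,j_2,j_3}$, whose product is $-1$). The second principle is \emph{component-forcing}: if $W$ is an effective divisor and $C$ an irreducible curve with $W.C<0$, then $C$ is a component of $W$, so $W-C$ is again effective. The aim is to turn each forbidden configuration into a numerical impossibility using these two inputs.

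For four disjoint $(-2)$-curves $C_1,\dots,C_4$ there is a clean uniform argument that bypasses the case analysis. By \Cref{lemma:overexceptionalE6} they span a copy of $\mathbf{A}_1^{\oplus 4}$ inside $\mathbf{E}_6$. Since $\mathrm{disc}(\mathbf{E}_6)=3$ is odd, the reduction $\mathbf{E}_6\otimes \BF_2$ carries a non-degenerate alternating (hence symplectic) form of rank $6$, in which the classes $\overline{C_i}$ span a totally isotropic subspace; as such subspaces have dimension at most $3$, the four classes are linearly dependent, i.e.\ $\sum_{i\in I}\overline{C_i}=0$ for some nonempty $I\subseteq\{1,\dots,4\}$. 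A proper subset is impossible: $\sum_{i\in I}C_i\in 2\mathbf{E}_6$ would force $\tfrac12\sum_{i\in I}C_i$ to lie in the even lattice $\mathbf{E}_6$, yet its self-intersection $-|I|/2$ is not an even integer unless $4\mid |I|$. Hence $W:=\tfrac12(C_1+C_2+C_3+C_4)\in \mathbf{E}_6\subseteq \Num(S)$ is integral, and it is effective because $2W$ is. But $W.C_i=\tfrac12 C_i^2=-1<0$ forces each $C_i$ to be a component of $W$, so $W\ge C_1+\dots+C_4=2W$ and thus $W\le 0$, contradicting effectivity.

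For the two rank-$6$ configurations I would instead argue through the explicit list of \Cref{lemma:possible_curves}. Any $\mathbf{A}_2$-pair of curves must have one of three shapes—two adjacent $R_j$, a complementary pair $(R_{j_1,j_2,j_3},R_{j_4,j_5,j_6})$, or an incident pair $(R_{j_1,j_2,j_3},R_j)$—and I would first record which systems of such pairs can be mutually orthogonal. Several sub-cases die immediately: three \emph{complementary} pairs cannot be pairwise orthogonal, since $|T\cap T'|+|T\cap (T')^c|=3$ is incompatible with both intersections forcing the value $1$; and the presence of $R$ excludes every $R_{j_1,j_2,j_3}$ by the first principle, confining the curves to $\{R,R_1,\dots,R_5\}$, which can only span $\mathbf{A}_5\oplus\mathbf{A}_1$ and never $\mathbf{E}_6$ or $\mathbf{A}_2^{\oplus 3}$. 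In each surviving sub-case I would produce an auxiliary effective class $W$ with $W.C<0$ for every curve $C$ of the configuration and then reach the same over-counting contradiction as above; the natural candidates for $W$ arise from the overlattice structure and from the relation $R_{j_1,j_2,j_3}+R_{j_4,j_5,j_6}=R$ for complementary triples together with effectivity.

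The main obstacle is precisely the $\mathbf{A}_2^{\oplus 3}$ and $\mathbf{E}_6$ cases. Here the slick mod-$2$ argument has no analogue: the six classes form a basis of $\mathbf{E}_6$ in the $\mathbf{E}_6$ case, and the glue vector of the index-$3$ inclusion $\mathbf{A}_2^{\oplus 3}\subseteq \mathbf{E}_6$ has mixed signs, so no single fractional class of the form $\tfrac1p\sum(\text{positive})\,C_i$ simultaneously has negative intersection with all six curves. I therefore expect the real work to lie in the finite but delicate enumeration of orthogonal systems of $\mathbf{A}_2$-pairs drawn from \Cref{lemma:possible_curves}, exploiting the over-exceptional condition—orthogonality to \emph{all} half-fibres—to manufacture, in each configuration, either a forbidden negative product or an effective class forced to contain more than itself.
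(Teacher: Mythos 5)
There is a genuine gap, and it sits in the one case you actually carried out. The step ``$W$ is effective because $2W$ is'' is false on an Enriques surface: $2K_S=0$ is effective while $K_S$ is not, and more to the point, for a class $W$ with $W^2=-2$ Riemann--Roch gives $\chi(W)=\tfrac12W^2+1=0$, so no lift of $W$ to $\Pic(S)$ is forced to be effective. Your mod-$2$ symplectic argument correctly produces $W=\tfrac12(C_1+\dots+C_4)\in\Num(S)$, but what your component-forcing step then proves is exactly that neither lift $W$ nor $W+K_S$ can be effective --- which is a consistent conclusion, not a contradiction. Indeed, the configuration you have produced is precisely a (numerically) \emph{even set} of four disjoint nodal curves, i.e.\ a copy of $\mathbf{D}_4$ spanned by $(-2)$-classes; such configurations are not lattice-theoretically excluded (e.g.\ $e_1\pm e_2,\,e_3\pm e_4$ inside $\mathbf{D}_4\hookrightarrow\mathbf{E}_6$, so $(\mathbf{U}\oplus\mathbf{A}_2)\oplus\mathbf{D}_4$ embeds into $\mathbf{U}\oplus\mathbf{E}_8$ compatibly with \Cref{lemma:overexceptionalE6}), and even sets of four disjoint nodal curves do occur on Enriques surfaces. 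Consequently no argument using only the intersection data of \Cref{lemma:possible_curves}, divisibility, and effectivity of $2W$ can close this case: one must exploit orthogonality to \emph{all} elliptic fibrations geometrically, which your sketch never does.

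This same deficiency is why the $\mathbf{A}_2^{\oplus 3}$ and $\mathbf{E}_6$ cases, which you explicitly leave open, cannot be finished by the mechanism you propose (an auxiliary effective class meeting every curve negatively): it is the very mechanism that fails above. The paper's proof needs substantially more: it first shows (\Cref{lem:simple_config}) that any of the three configurations is contained in a single \emph{simple} fibre of some elliptic pencil $|2F|$ --- and even that reduction uses effectivity of tails in the $10$-sequence and \Cref{prop:half_fibres}, not just intersection numbers --- and then invokes the special $3$-sequence/triangle-graph classification of \cite{MMV}, the bound of \Cref{prop:embedE8} on fibre components, a case analysis of Kodaira fibre types, and in one sub-case the Kond\=o--Martin classification of Enriques surfaces with finite automorphism group. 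Your preliminary exclusions (the incompatibility $R.R_{j_1,j_2,j_3}=-1$, the analysis of complementary triples, the confinement to $\{R,R_1,\dots,R_5\}$ when $R$ occurs) are correct and do mirror the opening moves of \Cref{lem:simple_config}, but configurations such as $C_2=R_1$, $C_4=R_3$, $C_6=R_5$, $C_1=R_{2,3,4}$, $C_3=R_{4,5,6}$, $C_5=R_{1,2,6}$ survive all such numerical tests and are only eliminated by the fibre-type analysis; so the ``finite but delicate enumeration'' you defer is not a technicality but the actual content of the proof.
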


We have the following first reduction step.

\begin{lemma}\label{lem:simple_config}
Assume that the over-exceptional lattice $E'(S)$ contains one of the three configurations of $(-2)$-curves in \Cref{prop:small_overexceptional}. Then there exists an elliptic pencil $|2F|$ on $S$ and a simple fibre $G\in |2F|$ containing the same configuration of $(-2)$-curves.
\end{lemma}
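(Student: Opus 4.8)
The plan is to first produce a single fibre (of some pencil) whose components contain the given configuration $\Gamma$, and then to arrange that this fibre can be taken simple. The starting observation is that, since $\Gamma\subseteq E'(S)$, every curve of $\Gamma$ is orthogonal to every half-fibre of $S$, and a $(-2)$-curve orthogonal to a half-fibre $F$ is a vertical component of the elliptic pencil $|2F|$ (it meets the nef fibre class $2F$ in $0$). Thus, for \emph{every} elliptic pencil on $S$, each curve of $\Gamma$ is a component of some fibre; since distinct fibres are disjoint and each reducible fibre is connected, a connected set of curves of $\Gamma$ always lies in a single fibre.

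For the connected configuration $\mathbf E_6$ this already does the job: I would pick any half-fibre $F$ (these exist by \Cref{thm:four}), and since $\Gamma$ is connected and vertical in $|2F|$, all six curves lie in one fibre $G$. The difficulty is entirely in the two disconnected configurations $\mathbf A_2^{\oplus 3}$ and $\mathbf A_1^{\oplus 4}$, whose connected pieces could a priori be distributed among different fibres of a given pencil. To handle these, I would use the explicit description of the curves of $E'(S)$ from \Cref{lemma:possible_curves}: writing each curve of $\Gamma$ in terms of the $10$-sequence $(F_1,\dots,F_4,E_1,\dots,E_6)$ and the class $H$, the plan is to exhibit one specific half-fibre $F$ — an explicit effective, primitive, isotropic class assembled from the $F_i$, $E_j$ and $H$ — together with the additional vertical $(-2)$-curves (again read off from the $10$-sequence combinatorics) that link the pieces of $\Gamma$ into a single reducible fibre $G$ of $|2F|$. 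Showing that all connected pieces of $\Gamma$ actually lie over one and the same point of $\BP^1$ is the step I expect to be the main obstacle.

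Once a fibre $G$ of some pencil $|2F|$ containing $\Gamma$ is in hand, upgrading it to a \emph{simple} fibre is clean and uniform. If $G$ is already simple, we are done. Otherwise $G$ is a multiple fibre, so $G=2F^*$ for a half-fibre $F^*$, and $\Gamma$ is contained among the components of $F^*$. By \Cref{thm:three} I can extend $F^*$ to a $3$-sequence and in particular find a half-fibre $F'$ with $F^*.F'=1$. Then \Cref{prop:half_fibres}(2) provides a simple fibre $G'\in|2F'|$ containing every component of $F^*$ except one, say $\Theta$. Since all the other components of $F^*$ lie in $G'$ and are therefore orthogonal to $F'$, the excluded component must satisfy $\Theta.F'\neq 0$; in particular $\Theta\notin E'(S)$, so $\Theta$ is not one of the curves of $\Gamma$. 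Hence $\Gamma$ sits among the components of the simple fibre $G'$, which completes the argument. The virtue of this last step is that the hypothesis $\Gamma\subseteq E'(S)$ automatically excludes the only bad case (the lost component lying in $\Gamma$), so no further choices are needed and the three configurations are treated identically here.
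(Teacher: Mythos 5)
Your final step---upgrading a multiple fibre $G=2F^*$ to a simple one via \Cref{prop:half_fibres}(2), observing that the single lost component $\Theta$ satisfies $\Theta.F'\neq 0$ and hence cannot lie in $E'(S)$---is correct and is essentially the argument the paper itself uses, and your treatment of the connected $\mathbf{E}_6$ configuration is also fine. But the heart of the lemma is missing. For the two disconnected configurations ($\mathbf{A}_1^{\oplus 4}$ and $\mathbf{A}_2^{\oplus 3}$) you only state a plan---``exhibit one specific half-fibre $F$ together with the additional vertical $(-2)$-curves that link the pieces of $\Gamma$ into a single reducible fibre''---and you yourself flag the step of putting all connected pieces over one and the same point of $\BP^1$ as the main obstacle. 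That obstacle \emph{is} the content of the lemma, and it is where the paper's proof does nearly all of its work: using \Cref{lemma:possible_curves} it identifies each curve $C_i$ as one of $R$, $R_{j_1,j_2,j_3}$ or $R_j$, argues case by case (using the intersection numbers listed there and the structure of tails in the $10$-sequence) that not all the $C_i$ can lie in the $10$-sequence, eliminates certain combinations by counting half-fibres and tail lengths (e.g.\ the possibility $C_3=R_{j_1,j_4,j_6}$, $C_4=R_{j_2,j_5,j_6}$ is ruled out because it would force all tails to have length $\le 1$), and in each surviving case shows that the $C_i$ together with suitable tails form a \emph{connected} vertical configuration orthogonal to one of the four half-fibres $F_i$, hence contained in a single fibre of $|2F_i|$. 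Without this combinatorial analysis the statement remains unproved for precisely the configurations that matter downstream in \Cref{prop:small_overexceptional}.

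Note also that connectivity is the only mechanism available to force disjoint curves into the same fibre, so any completion of your plan must reproduce an analysis of this kind: writing down a candidate isotropic class does not help unless you also produce the vertical curves joining the pieces of $\Gamma$ inside one fibre, which is exactly the case analysis you deferred.
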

\begin{proof}
A configuration of $(-2)$-curves of type $\mathbf{E}_6$ is connected, hence if such a configuration belongs to the over-exceptional lattice $E'(S)$, by definition it is contained in a fibre of every elliptic fibration on $S$. The fibre must be simple, since any fibre containing a vertex of valency $3$ is additive. Hence we only have to consider the other two cases, which will be dealt with separately.
We will show first that there is an elliptic fibre $G$ containing the configuration of $(-2)$-curves, and at the end that we can take $G$ to be a simple fibre.
\begin{enumerate}
    \item Assume that $E'(S)$ contains four disjoint $(-2)$-curves $C_1,C_2,C_3,C_4$ and consider the $10$-sequence $(F_1,F_2,F_3,F_4,E_1,\ldots,E_6)$. Since the $C_i$ are orthogonal to the $F_i$, they cannot be the first curve of any tail in the $10$-sequence. In particular the $C_i$ cannot all belong to the $10$-sequence, so at least one of them, say $C_1$, does not belong to the $10$-sequence.
    
    Assume first that $C_1=R$. Since $R.R_{j_1,j_2,j_3}=-1<0$ for any $j_1,j_2,j_3$, the divisors $R_{j_1,j_2,j_3}$ are not $(-2)$-curves. In particular $C_2,C_3,C_4$ must belong to the $10$-sequence, and they are contained in a maximum of $3$ tails. The $(-2)$-curve $R$ intersects the first curve of each tail of the $10$-sequence, so the tails containing $C_2,C_3,C_4$ together with $C_1=R$ form a connected configuration of $(-2)$-curves orthogonal to one of the $F_i$, so there is a fibre $G_i \in |2F_i|$ containing it.
    
    Assume instead that $C_1=R_{j_1,j_2,j_3}$. By \Cref{lemma:possible_curves} there are at most two of the $R_j$ that are disjoint and orthogonal to $C_1$, so we may assume again by \Cref{lemma:possible_curves} that $C_2=R_{j_3,j_4,j_5}$.
    
    We claim that $C_3$ and $C_4$ belong to the $10$-sequence. If by contradiction $C_3=R_{j_1,j_4,j_6}$, then by \Cref{lemma:possible_curves} we would necessarily have $C_4=R_{j_2,j_5,j_6}$. Each $E_j$ intersects at least one of the $C_i$, so the $10$-sequence contains precisely $4$ half-fibres. However each $R_j$ intersects negatively at least one of the $C_i$, so the divisors $R_j$ for $1\le j\le 5$ are not effective.
    In particular all the tails in the $10$-sequence have length $\le 1$, contradicting the fact that there are $10$ divisors in the $10$-sequence.
    
    Therefore $C_3$ and $C_4$ belong to the $10$-sequence, and since the $C_i$ are all disjoint, by \Cref{lemma:possible_curves} we must have $C_1=R_{j_1,j_1+1,j_3}$, $C_2=R_{j_3,j_4,j_4+1}$, $C_3=R_{j_1}$ and $C_4=R_{j_4}$. The curve $C_1$ intersects $E_{j_1}$ and $E_{j_3}$, so it intersects a curve in the tail containing $C_3=E_{j_1+1}-E_{j_1}$ and the tail containing $E_{j_3}$, while $C_2$ intersects the tail containing $C_4$ and the tail containing $E_{j_3}$. In particular the tails containing $C_3$, $C_4$ and $E_{j_3}$, together with $C_1$ and $C_2$, form a connected configuration of $(-2)$-curves orthogonal to one of the $F_i$, so there is a fibre $G_i \in |2F_i|$ containing it.
    
    \item Assume that $E'(S)$ contains six $(-2)$-curves $C_1,\ldots,C_6$ spanning a lattice isometric to $\mathbf{A}_2^{\oplus 3}$.
    Up to renumbering we can assume that $C_1.C_2=C_3.C_4=C_5.C_6=1$, and that the other intersection numbers are $0$.
    We consider the $10$-sequence $(F_1,F_2,F_3,F_4,E_1,\ldots,E_6)$. The six curves $C_i$ cannot all be contained in the $10$-sequence, so we may assume that $C_1$ does not belong to the $10$-sequence.
    The curve $C_1$ cannot coincide with the divisor $R$ in \Cref{lemma:possible_curves}, since no curve in \Cref{lemma:possible_curves} intersects $R$ with multiplicity $1$. Hence we may assume $C_1=R_{j_1,j_2,j_3}$.
    
    If $C_2$ does not belong to the $10$-sequence, the equation $C_1.C_2=1$ implies by \Cref{lemma:possible_curves} that $C_2=R_{j_4,j_5,j_6}$ for a certain permutation $(j_1,\ldots,j_6)$ of $(1,\ldots,6)$. The other $4$ curves $C_3,\ldots,C_6$, being orthogonal to $C_1$ and $C_2$, must belong to the $10$-sequence, and more precisely $C_3=R_1$, $C_4=R_2$, $C_5=R_4$, $C_6=R_5$. Up to switching $C_1$ and $C_2$, this forces $C_1=R_{1,2,3}$ and $C_2=R_{4,5,6}$. Since $C_1$ intersects the tail containing $C_3$ and $C_4$, and $C_2$ intersects the tail containing $C_5$ and $C_6$, the two tails together with $C_1$ and $C_2$ form a connected configuration of $(-2)$-curves orthogonal to one of the $F_i$.
    
    Hence we may assume that in each pair $(C_1,C_2)$, $(C_3,C_4)$, $(C_5,C_6)$, at least one of the two curves belongs to the $10$-sequence, say $C_2,C_4$ and $C_6$. The curve $C_3$ does not belong to the $10$-sequence, since otherwise it would be impossible to fit the curves $C_2,C_3,C_4,C_6$ (spanning a lattice isometric to $\mathbf{A}_1\oplus \mathbf{A}_2\oplus \mathbf{A}_1$) among the (at most) six $(-2)$-curves in the $10$-sequence. 
    The same argument applies to $C_5$, hence $C_1$, $C_3$ and $C_5$ do not belong to the $10$-sequence. This forces the three disjoint curves $C_2,C_4,C_6$ to coincide with the following divisors: $C_2=R_1$, $C_4=R_3$, $C_6=R_5$. Consequently $C_1=R_{2,3,4}$, $C_3=R_{4,5,6}$, $C_5=R_{1,2,6}$. Since $C_1$ (resp. $C_3$, $C_5$) intersects the tails containing $C_2$ and $C_4$ (resp. $C_4$ and $C_6$, $C_2$ and $C_6$), we have that the tails containing $C_2$, $C_4$, $C_6$, together with $C_1$, $C_3$ and $C_5$, form a connected configuration of $(-2)$-curves orthogonal to one of the $F_i$.
\end{enumerate}

It only remains to show that the fibre $G\in |2F|$ containing the $C_i$ can be assumed to be simple. If $G=2F$ is a multiple fibre, choose by \Cref{thm:extendability} a half-fibre $F'$ with $F.F'=1$. By \Cref{prop:half_fibres} there exists a simple fibre $G'\in |2F'|$ containing all components of $G$ except one, say $C$. This component satisfies $C.G'>0$, so it is not one of the $C_i$. It follows that $G'$ is a simple fibre containing all the $C_i$, as claimed.

\end{proof}

\begin{proof}[Proof of \Cref{prop:small_overexceptional}]
We argue by contradiction, so assume that the over-exceptional lattice of $S$ contains a configuration of $(-2)$-curves as in \Cref{prop:small_overexceptional}.
By \Cref{lem:simple_config}, let $G\in |2F|$ be the simple fibre containing the configuration of curves, which we denote by $C_i$. By \Cref{thm:extendability}, let $F'$ and $F''$ be half-fibres such that $(F,F',F'')$ is a $3$-sequence. The curve $C_1$ is orthogonal to both $F'$ and $F''$ by assumption, and since half-fibres of $F'$ and $F''$ do not share components by \Cref{prop:half_fibres}, without loss of generality we can assume that $C_1$ belongs to a simple fibre $G'\in |2F'|$. Therefore by \cite[Remark~3.10]{MMV} we can extend $(F,F')$ to a \emph{special} $3$-sequence $(F,F',F'')$ (cf. \cite[Definition~3.1]{MMV}).
More precisely, the simple fibres $G$ and $G'$ (and hence the curves $C_i$) belong to the \emph{triangle graph} (cf. \cite[Definition~3.5]{MMV}) of the $3$-sequence, and by \cite[Proposition~3.7]{MMV} there are only finitely many possible structures for the triangle graph, listed in \cite[Table~1]{MMV}.

In particular the curves $C_i$ belong to an elliptic fibre in one of the dual graphs in \cite[Table~1]{MMV}, and they are orthogonal to all the elliptic fibres supported on the graph. 

For most of the graphs, it is straightforward to check that this is contradictory: we simply list all the elliptic fibres on the graph, and see that there are no configurations of $(-2)$-curves as in the statement that are orthogonal to all the elliptic fibres.

Let us discuss the non obvious cases here below. Notice that the three elliptic fibres obtained as $S_i+S_j$ in \cite[Table~1]{MMV} are simple by \cite[Proposition~3.3.(3)]{MMV}.

In the case $(\mathbf{E}_8,\mathbf{A}_1,\mathbf{A}_1)$, there is a configuration of type $\mathbf{E}_7$ orthogonal to the simple fibre $G_1$ with $2$ components. This configuration is contained in a reducible fibre $G_1'\in|G_1|$, which by \Cref{prop:embedE8} must be of type $III^*$. Since the bisection of $G_1$ is also a bisection of $G_1'$, we obtain the following graph:

$$
    \begin{tikzpicture}[scale=0.75]
    \node (R0) at (-1,0) [nodal,fill=white] {};
\node (R1) at (0,0) [nodal,fill=white] {};
\node (R2) at (1,0) [nodal,fill=white] {};
\node (R3) at (2,0) [nodal,fill=white] {};
\node (R4) at (3,0) [nodal,fill=white] {};
\node (R5) at (4,0) [nodal,fill=white] {};
\node (R6) at (5,0) [nodal,fill=white] {};
\node (R7) at (6,0) [nodal,fill=white] {};
\node (R8) at (7,0) [nodal,fill=white] {};
\node (R9) at (8,0) [nodal,fill=white] {};
\node (R10) at (2,1) [nodal,fill=white] {};

\draw (R0)--(R1)--(R2)--(R3)--(R4)--(R5)--(R6)--(R7)--(R8) (R3)--(R10) (R7) to[bend left=50] (R9) (R0) to[bend right] (R7);
\draw[double] (R8)--(R9);
    \end{tikzpicture}
$$

It is straightforward to check that this graph has no vertex orthogonal to all elliptic configurations.

In the case $(\mathbf{E}_7,\mathbf{A}_1,\mathbf{A}_1)$, there is a configuration of type $\mathbf{D}_6$ orthogonal to the simple fibre $G_1$ with $2$ components, which by \Cref{prop:embedE8} is contained in a second reducible fibre $G_1'\in |G_1|$ of type $I_2^*$, $I_3^*$ or $III^*$. The $I_3^*$ fibre is impossible since this would yield a section to the existing simple $III^*$ fibre (the union of $\mathbf{E}_7$ and one $\mathbf{A}_1$). The $III^*$ fibre is impossible, since otherwise the bisection of $G_1$ would intersect $G_1'$ in a component of multiplicity $3$. Hence we obtain the following graph, for which no vertex is orthogonal to all elliptic configurations:
$$
    \begin{tikzpicture}[scale=0.75]
    \node (R0) at (-1,0) [nodal,fill=white] {};
\node (R1) at (0,0) [nodal,fill=white] {};
\node (R2) at (1,0) [nodal,fill=white] {};
\node (R3) at (2,0) [nodal,fill=white] {};
\node (R4) at (3,0) [nodal,fill=white] {};
\node (R5) at (4,0) [nodal,fill=white] {};
\node (R6) at (5,0) [nodal,fill=white] {};
\node (R7) at (6,0) [nodal,fill=white] {};
\node (R8) at (2,1) [nodal,fill=white] {};
\node (R9) at (0,-1) [nodal,fill=white] {};

\draw (R0)--(R1)--(R2)--(R3)--(R4)--(R5)--(R6) (R3)--(R8) (R5) to[bend left=50] (R7) (R5) to[bend left] (R9) (R1)--(R9);
\draw[double] (R6)--(R7);
    \end{tikzpicture}
$$

In the case $(\mathbf{D}_m,\mathbf{D}_n,\mathbf{A}_1)$, with $m=n=5$, consider a simple fibre $G_1$ of type $I_1^*$. There is a configuration of type $\mathbf{A}_3$ orthogonal to $G_1$, so by \Cref{prop:embedE8} the pencil $|G_1|$ admits a second reducible fibre of type $I_4$. If it is a half-fibre, we obtain by \cite[Theorem~4.1]{Martin} that $S$ is a surface with finite automorphism group of Kond\=o's type $\mathrm{II}$, and we conclude by \Cref{ex:kummer}. If it is a simple fibre, we obtain the following graph, where the bold vertices are the only ones orthogonal to all elliptic configurations:

$$
    \begin{tikzpicture}[scale=0.75]
\node (R1) at (0,1) [nodal] {};
\node (R2) at (0,-1) [nodal] {};
\node (R3) at (1,0) [nodal,fill=white] {};
\node (R4) at (2,0) [nodal,fill=white] {};
\node (R5) at (3,1) [nodal,fill=white] {};
\node (R6) at (3,-1) [nodal,fill=white] {};
\node (R7) at (4,0) [nodal,fill=white] {};
\node (R8) at (5,0) [nodal,fill=white] {};
\node (R9) at (6,1) [nodal,fill=white] {};
\node (R10) at (6,-1) [nodal,fill=white] {};
\node (R11) at (7,0) [nodal,fill=white] {};

\draw (R1)--(R3) (R2)--(R3)--(R4)--(R5)--(R7) (R4)--(R6)--(R7)--(R8)--(R9)--(R11) (R8)--(R10)--(R11) (R7) to[bend right] (6,-1.5) to[bend right=55] (R11);

    \end{tikzpicture}
$$

Finally consider the case $(\mathbf{D}_m,\mathbf{A}_1,\mathbf{A}_1)$, with $6\le m \le 8$, and the simple fibre $G_1$ with two components. There is a configuration of type $\mathbf{D}_{m-2}\oplus \mathbf{A}_1$ orthogonal to $G_1$.

$$
    \begin{tikzpicture}[scale=0.75]
        \node (R1) at (-3, 1) [nodal, label=left:\tiny$R_1$, fill=white] {};
        \node (R2) at (-2, 0) [nodal, label=below:\tiny$R_3$, fill=white] {};
        \node (R3) at (-3,-1) [nodal, label=left:\tiny$R_2$, fill=white] {};
        \node (R4) at (0,0) [nodal, label=above:\tiny$R_{m-2}$, fill=white] {};
        \node (R5) at ( 1, 0) [nodal, label=below:\tiny$R_{m-1}$, fill=white] {};
        \node (R6) at ( 2, 0) [nodal, label=right:\tiny$R_m$, fill=white] {};
        \node (R7) at ( 3, 1) [nodal, label=right:\tiny$R_{m+1}$, fill=white] {};
        \node (R8) at ( 3,-1) [nodal, label=right:\tiny$R_{m+2}$, fill=white] {};
        \draw (R1)--(R2)--(R3) (R4)--(R5)--(R6) (R5)--(R7) (R5)--(R8);
        \draw[dashed] (R2)--(R4);
        \draw [double] (R7)--(R8);
    \end{tikzpicture}
$$

Assume first that there is a unique fibre $G_1'\in |G_1|$ containing this configuration. $G_1'$ contains the vertex $R_3$ of valency $3$, so it must be additive. Moreover the two components $R_{m-2}$ and $R_m$ of $G_1'$, that meet the bisection $R_{m-1}$ of $G_1$ and $G_1'$, must be simple components (i.e.\ of multiplicity $1$) of $G_1'$, so $G_1'$ cannot be of type $II^*$. If $G_1'$ were of type $I_l^*$ for some $l\ge 0$, then $R_1$, $R_2$, $R_{m-2}$ and $R_m$ would be the four simple components of $G_1'$. But since $R_{m-3}$ intersects $R_{m-2}$, it would also intersect $R_m$, which is impossible. Therefore the configuration of type $\mathbf{D}_{m-2}\oplus \mathbf{A}_1$ must be contained in a fibre of type $III^*$. In particular $m=8$, since $R_{m-2}$, which meets with the bisection $R_{m-1}$, must have multiplicity $1$ in $G_1'$:

$$
    \begin{tikzpicture}[scale=0.75]
        \node (R1) at (-3, 1) [nodal,fill=white] {};
        \node (R2) at (-2, 0) [nodal,fill=white] {};
        \node (R3) at (-3,-1) [nodal,fill=white] {};
        \node (R4) at (-1,0) [nodal,fill=white] {};
        \node (R5) at (0,0) [nodal,fill=white] {};
        \node (R6) at ( 1, 0) [nodal,fill=white] {};
        \node (R7) at ( 2, 0) [nodal,fill=white] {};
        \node (R8) at (3,0) [nodal,fill=white] {};
        \node (R9) at ( 4, 1) [nodal,fill=white] {};
        \node (R10) at ( 4,-1) [nodal,fill=white] {};
        \node (R11) at (0,-1.5) [nodal,fill=white] {};
        \draw (R1)--(R2)--(R3) (R2)--(R4)--(R5)--(R6)--(R7)--(R8) (R9)--(R7)--(R10) (R3)--(R11) to[bend right] (R8);
        \draw [double] (R9)--(R10);
    \end{tikzpicture}
$$

It is straightforward to check that this graph has no vertex orthogonal to all elliptic configurations.

If instead the configuration belongs to two different fibres, the fibre $G_1'\in |G_1|$ containing the $\mathbf{D}_{m-2}$ configuration must be of type $I_{m-6}^*$, since if it were of any other type the graph would contain an additive elliptic fibre with a section, contradicting the fact that additive fibres are simple. Hence we get the following graph, where the bold vertices are the only ones orthogonal to all elliptic configurations:

$$
    \begin{tikzpicture}[scale=0.75]
        \node (R1) at (-3, 1) [nodal] {};
        \node (R2) at (-2, 0) [nodal] {};
        \node (R3) at (-3,-1) [nodal] {};
        \node (R) at (-1,0) [nodal,fill=white] {};
        \node (R4) at (0,0) [nodal,fill=white] {};
        \node (R5) at ( 1, 0) [nodal,fill=white] {};
        \node (R6) at ( 2, 0) [nodal,fill=white] {};
        \node (R7) at ( 3, 1) [nodal,fill=white] {};
        \node (R8) at ( 3,-1) [nodal,fill=white] {};
        \node (R9) at (0,-1) [nodal,fill=white] {};
        \draw (R1)--(R2)--(R3) (R)--(R4)--(R5)--(R6) (R5)--(R7) (R5)--(R8) (R)--(R9)--(R5);
        \draw[dashed] (R2)--(R);
        \draw [double] (R7)--(R8);
    \end{tikzpicture}
$$

In all the cases we see that there are no configurations of $(-2)$-curves as in the statement that are orthogonal to all the elliptic fibres, thus proving the desired claim.

\end{proof}

\begin{proof}[Proof of \Cref{thm:overexceptional_Enriques}]
Recall that by \Cref{lemma:overexceptionalE6} there is an embedding $E'(S)\hookrightarrow \mathbf{E}_6$.
\begin{enumerate}
    \item Assume by contradiction that $\rk{E'(S)}=6$. Then $\mathbf{E}_6$ is an overlattice of the root lattice $E'(S)$. By Equation (\ref{eq:over}) the discriminant of $E'(S)$ is $3$ up to squares. 
    Since the only ADE lattices with discriminant divisible by $3$ are $\mathbf{A}_2$ and $\mathbf{E}_6$, either $E'(S)$ is isometric to $\mathbf{E}_6$ or it is of the form $\mathbf{A}_2\oplus L$ for a certain root lattice $L$ of rank $4$.
    
    The discriminant of $L$ is a square $m^2$, and by assumption $\mathbf{E}_6$ is an overlattice of $\mathbf{A}_2\oplus L$ of index $m$. In other words, there exists an isotropic subgroup of $A_{\mathbf{A}_2\oplus L}=A_{\mathbf{A}_2}\oplus A_L$ of order $m$.
    If $m$ is not divisible by $3$, then the subgroup is contained in $A_L$, and therefore $L$ admits an overlattice of discriminant $1$. This is impossible, as there are no unimodular lattices of rank $4$.
    
    Hence $L$ contains a copy of $\mathbf{A}_2$, and since the discriminant of $E'(S)$ is $3$ up to squares, we deduce that $E'(S)$ is isometric to either $\mathbf{E}_6$ or $\mathbf{A}_2^{\oplus 3}$.
    However both possibilities contradict \Cref{prop:small_overexceptional}.
    \item If the embedding $E'(S)\hookrightarrow \Num(S)$ is not primitive, $E'(S)$ admits an overlattice of index $p$ prime contained in its saturation. By \Cref{prop:conc}, $E'(S)$ contains a sublattice $\widetilde R$ isometric to $\mathbf{A}_{p-1}^{\oplus r}$, admitting an overlattice. 
    The only lattices of the form $\mathbf{A}_{p-1}^{\oplus r}$ of rank $\le 5$ and admitting an overlattice are $\mathbf{A}_1^{\oplus 4}$ and $\mathbf{A}_1^{\oplus 5}$ by \Cref{lem:overnonroot}. 
    Since $\rk{\widetilde R}\le 5$, we have that $\widetilde R$ is isometric to $\mathbf{A}_1^{\oplus 4}$ or $\mathbf{A}_1^{\oplus 5}$. 
    In both cases $E'(S)$ contains four disjoint curves, contradicting \Cref{prop:small_overexceptional}.
    \item Let $\Lambda$ be the sublattice of $\Pic(X)$ spanned by the components of the pullbacks of the $(-2)$-curves in $E'(S)$. The $(-2)$-curves in $E'(S)$ are vertical components of any elliptic fibration on $S$, and $E'(S)$ contains no complete fibre by \Cref{lemma:overexceptionalE6}, hence the union $\Delta$ of the $(-2)$-curves in $E'(S)$ is simply connected. In particular $\Delta$ splits under the K3 cover, and $\Lambda\cong E'(S)^{\oplus 2}$ is isometric to the direct sum of two copies of $E'(S)$.
    
    If the embedding $\Lambda \hookrightarrow \Pic(X)$ is not primitive, then by \Cref{prop:primitive}, $\Lambda$ admits an overlattice of index $p$ prime that is not a root lattice. By \Cref{prop:conc} this overlattice is concentrated at a sublattice $\widetilde R \subseteq \Lambda$ spanned by $(-2)$-curves and isometric to $\mathbf{A}_{p-1}^{\oplus r}$. Since $\widetilde R$ admits an overlattice that is not a root lattice, and it has rank $\le 10$ by part (1), by \Cref{lem:overnonroot} $\widetilde R$ is isometric to $\mathbf{A}_1^{\oplus r}$ for a certain $8\le r \le 10$. But then the images of these $r\ge 8$ disjoint $(-2)$-curves on the Enriques surface $S$ are at least $\lceil r/2 \rceil \ge 4$ disjoint $(-2)$-curves in $E'(S)$, contradicting \Cref{prop:small_overexceptional}.
\end{enumerate}
\end{proof}

\begin{corollary} \label{cor:overexceptional_K3}
Let $X$ be a K3 surface over an algebraically closed field of characteristic different from $2$ covering an Enriques surface. The over-exceptional lattice $E'(X)$ of $X$ has rank $\le 10$ and it embeds primitively into $\Pic(X)$.
\end{corollary}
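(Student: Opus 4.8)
The plan is to compare $E'(X)$ with the pullback lattice studied in \Cref{thm:overexceptional_Enriques}(3). Write $\pi\colon X\to S$ for the K3 cover and $\iota$ for the fixed-point-free Enriques involution, so that $S=X/\iota$. Recall from \Cref{thm:overexceptional_Enriques}(3) and its proof that the $(-2)$-curves spanning $E'(S)$ form a simply connected configuration $\Delta$, whose preimage splits as $\pi^{-1}(\Delta)=\Delta'\sqcup\iota\Delta'$ into two disjoint copies, yielding a primitive embedding $\Lambda:=\langle \Delta'\cup\iota\Delta'\rangle\cong E'(S)^{\oplus 2}\hookrightarrow \Pic(X)$, where $\pi$ restricts to isomorphisms $\Delta'\xrightarrow{\sim}\Delta$ and $\iota\Delta'\xrightarrow{\sim}\Delta$. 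Since $X$ carries at least two distinct elliptic fibrations (pull back two of the four half-fibres furnished by \Cref{thm:four}, whose classes are distinct and hence induce distinct fibre classes on $X$), the lattice $E'(X)$ is spanned by $(-2)$-curves.

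The first and main step is a descent lemma: every $(-2)$-curve $D'$ in $E'(X)$ is one of the components of $\Delta'\cup\iota\Delta'$, so that $E'(X)\subseteq\Lambda$. I would first use that for any half-fibre $F$ on $S$ the elliptic pencil $|2F|$ pulls back to an elliptic fibration on $X$ whose fibre class is a positive multiple of $\pi^*F$; since $D'$ is orthogonal to every elliptic fibration on $X$, this gives $D'.\pi^*F=0$ for all half-fibres $F$. Next, as $\iota$ is fixed-point free and $D'\cong\BP^1$ admits no free involution, we have $\iota(D')\neq D'$, so $\pi$ maps $D'$ isomorphically onto a curve $C:=\pi(D')$ with $\pi^*C=D'+\iota D'$ a sum of two disjoint $(-2)$-curves; in particular $C$ is a $(-2)$-curve on $S$. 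Finally the projection formula yields $C.F=\pi_*D'.F=D'.\pi^*F=0$ for every half-fibre $F$, whence $C\in E'(S)$ and $D'$ is a component of $\pi^*C$, i.e.\ $D'\in\Delta'\cup\iota\Delta'$. Consequently $E'(X)\subseteq\Lambda$ and $\rk E'(X)\le\rk\Lambda=2\rk E'(S)\le 10$ by \Cref{thm:overexceptional_Enriques}(1).

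For primitivity I would argue in complete parallel with the proof of \Cref{thm:overexceptional_Enriques}(3). Suppose the embedding $E'(X)\hookrightarrow\Pic(X)$ is not primitive. As $E'(X)$ is a root lattice spanned by $(-2)$-curves on the K3 surface $X$, \Cref{prop:primitive} supplies an overlattice of prime index $p$ inside the saturation that is not a root lattice; by \Cref{prop:conc} it is concentrated at a sublattice $\widetilde R\subseteq E'(X)$ isometric to $\mathbf{A}_{p-1}^{\oplus r}$ which itself admits a non-root overlattice. Since $\rk E'(X)\le 10$, \Cref{lem:overnonroot} forces $p=2$ and $\widetilde R\cong\mathbf{A}_1^{\oplus r}$ with $8\le r\le 10$, i.e.\ $r$ pairwise disjoint $(-2)$-curves in $E'(X)$. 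By the descent lemma each of these lies in $\Delta'$ or in $\iota\Delta'$, so by pigeonhole one of the two copies contains at least $\lceil r/2\rceil\ge 4$ of them; via the isomorphisms onto $\Delta$ these map to at least four pairwise disjoint $(-2)$-curves in $E'(S)$, contradicting \Cref{prop:small_overexceptional}. Hence $E'(X)\hookrightarrow\Pic(X)$ is primitive.

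The main obstacle is the descent lemma, and within it the input that the pullback of an elliptic pencil $|2F|$ on $S$ is a genuine elliptic fibration on $X$ with fibre class proportional to $\pi^*F$. This is precisely what allows me to test orthogonality of $D'$ against the induced fibrations alone, sidestepping the delicate question of controlling all the (possibly non-$\iota$-invariant) elliptic fibrations of $X$; once it is in place, the rank bound and the primitivity argument run mechanically from \Cref{thm:overexceptional_Enriques} and \Cref{prop:small_overexceptional}.
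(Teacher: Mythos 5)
Your proposal is correct, and its first half coincides with the paper's argument: the descent step (a $(-2)$-curve $D'\in E'(X)$ is not $\iota$-invariant, maps isomorphically to a $(-2)$-curve $C=\pi(D')$, and the projection formula together with the induced fibrations $|\pi^*F|$ forces $C\in E'(S)$) is exactly how the paper obtains $E'(X)\subseteq E'(S)^{\oplus 2}$ and hence $\rk E'(X)\le 10$ from \Cref{thm:overexceptional_Enriques}(1); the paper merely states this more tersely. Where you diverge is the primitivity step. The paper extracts it directly from what you call the descent lemma: since the $(-2)$-curves spanning $E'(X)$ are a subset of the $(-2)$-curves that form a $\ZZ$-basis of $\Lambda\cong E'(S)^{\oplus 2}$, the quotient $\Lambda/E'(X)$ is torsion-free, i.e.\ $E'(X)\hookrightarrow\Lambda$ is primitive; composing with the primitive embedding $\Lambda\hookrightarrow\Pic(X)$ of \Cref{thm:overexceptional_Enriques}(3) (primitive embeddings compose, as an extension of a torsion-free group by a torsion-free group is torsion-free) finishes the proof in one line. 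You instead rerun the whole machinery behind \Cref{thm:overexceptional_Enriques}(3) on $E'(X)$ itself: \Cref{prop:primitive}, \Cref{prop:conc} and \Cref{lem:overnonroot} to force $p=2$ and a configuration $\mathbf{A}_1^{\oplus r}$, $8\le r\le 10$, of pairwise disjoint $(-2)$-curves, then the pigeonhole into $\Delta'\sqcup\iota\Delta'$ and \Cref{prop:small_overexceptional} for the contradiction. This is valid --- the curves of the $\mathbf{A}_1^{\oplus r}$ configuration are among the generators of $E'(X)$, so your descent lemma does place them in $\Delta'\sqcup\iota\Delta'$, and disjointness is preserved under the isomorphisms $\Delta'\cong\Delta\cong\iota\Delta'$ --- but it duplicates a proof the paper can simply cite. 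The trade-off: the paper's route is shorter and uses \Cref{thm:overexceptional_Enriques}(3) as a black box, while yours is self-contained in that it never needs the ``sub-basis implies primitive'' observation nor the transitivity of primitive embeddings, at the cost of repeating the overlattice analysis.
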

\begin{proof}
Let $S$ be an Enriques surface covered by $X$. If a $(-2)$-curve $C$ on $X$ belongs to the over-exceptional lattice $E'(X)$, it is orthogonal to all elliptic fibres on $X$. In particular it is orthogonal to all the elliptic fibres on $X$ that are pullbacks of elliptic fibres on the Enriques surface $S$. Therefore $E'(X)\subseteq E'(S)^{\oplus 2}$. Moreover the embedding $E'(X)\hookrightarrow E'(S)^{\oplus 2}$ is primitive, since a $\ZZ$-basis for $E'(X)$ is given by a sub-$\ZZ$-basis of $E'(S)^{\oplus 2}$. Hence we conclude by \Cref{thm:overexceptional_Enriques}.
\end{proof}

\begin{remark} \label{rk:finitelist}
 \begin{enumerate}
     \item The conditions found in \Cref{cor:overexceptional_K3} for the over-exceptional lattice $E'(X)$ do not hold for every K3 surface $X$. As an example, let $X$ be a K3 surface with $\Pic(X)=\mathbf{U}\oplus \mathbf{E}_8^{\oplus 2}$. By \cite[\S4.2]{KondoK3} $X$ has finite automorphism group, and the dual graph of its $(-2)$-curves is as follows:
     $$
    \begin{tikzpicture}[scale=0.75]
        
        \node (R1) at (0, 0) [nodal, fill=white, label=below:$R_1$] {};
        \node (R2) at (1, 0) [nodal, fill=white] {};
        \node (R3) at (2,0) [nodal, fill=white] {};
        \node (R4) at (3,0) [nodal] {};
        \node (R5) at (4,0) [nodal, fill=white] {};
        \node (R6) at ( 5, 0) [nodal] {};
        \node (R7) at ( 6, 0) [nodal, fill=white] {};
        \node (R8) at ( 7, 0) [nodal] {};
        \node (R9) at ( 2,1) [nodal] {};
        \node (S) at (8,1) [nodal, fill=white, label=right:$S_0$] {};
        \node (R11) at (9, 0) [nodal] {};
        \node (R12) at (10, 0) [nodal, fill=white] {};
        \node (R13) at (11,0) [nodal] {};
        \node (R14) at (12,0) [nodal, fill=white] {};
        \node (R15) at (13,0) [nodal] {};
        \node (R16) at ( 14, 0) [nodal, fill=white] {};
        \node (R17) at ( 15, 0) [nodal, fill=white] {};
        \node (R18) at ( 16, 0) [nodal, fill=white, label=below:$R_2$] {};
        \node (R19) at ( 14,1) [nodal] {};

        \draw (R1)--(R2)--(R3)--(R4)--(R5)--(R6)--(R7)--(R8)--(S)--(R11)--(R12)--(R13)--(R14)--(R15)--(R16)--(R17)--(R18) (R3)--(R9) (R16)--(R19);
    \end{tikzpicture}
$$

There are precisely two elliptic fibrations on $X$: the first one has two reducible fibres of type $II^*$ and $S_0$ is its unique section, while the second one has a reducible fibre of type $I_{12}^*$ and $R_1,R_2$ are its sections. We obtain that $E'(X)$ is spanned by the other (unlabelled) $16$ curves, and it is isometric to $\mathbf{D}_8^{\oplus 2}$. Moreover the embedding $E'(X)\hookrightarrow \Pic(X)$ is not primitive, as the sum $D$ of the eight bold vertices is divisible by $2$ in $\Pic(X)$. Indeed $D$ intersects every element in $\Pic(X)$ with even multiplicity, hence $\frac{1}{2}D$ belongs to $\Pic(X)^\vee = \Pic(X)$ (by unimodularity of $\Pic(X)$). The saturation of $E'(X)$ in $\Pic(X)$ is not a root lattice, since it is concentrated at a sublattice isometric to $\mathbf{A}_1^{\oplus 8}$. It follows by \Cref{rk:geometric} that the fundamental group of the complement $X\setminus Z$, where $Z$ is the union of the $(-2)$-curves in the over-exceptional lattice, is isomorphic to $\ZZ/2$.
     
\item Nikulin proves \cite[Theorem~4.4]{Nikulin} that there exists a finite list $\CS'$ of Picard lattices of K3 surfaces of rank $\ge 6$ with a non-zero exceptional lattice. We deduce that there exists a finite list $\CS\subseteq \CS'$ of Picard lattices of K3 surfaces of rank $\ge 6$ with a non-zero over-exceptional lattice.

However the proof of finiteness of $\CS'$ is non-constructive. The K3 surfaces $X$ with non-trivial $E(X)$ can be grouped depending on whether $E(X)$ is \emph{hyperbolic}, \emph{parabolic} (i.e.\ negative semidefinite
with one-dimensional kernel) or \emph{elliptic} (i.e.\ negative definite). Of these, the first two classes are fully known. A K3 surface $X$ admits a hyperbolic exceptional lattice $E(X)$ if and only if $E(X)=\Pic(X)$ if and only if it has a finite automorphism group by \cite[\S4]{Nikulin}, and these surfaces have been classified by Nikulin \cite{Nikulin_2} and Vinberg \cite{Vinberg} (see \cite[\S15.2, p.\ 317]{Huybrechts} for a comprehensive list of references).

On the other hand, K3 surfaces with a parabolic $E(X)$ admit a unique elliptic fibration with infinite stabiliser, and from the point of view of complex dynamics they form an important class of K3 surfaces, usually called \emph{of zero entropy} (see \cite{Cantat}). A classification of the Picard lattices of such surfaces was obtained by the second author in \cite{Mezzedimi2} and independently by \cite{XY}.

Lastly, K3 surfaces with a negative definite $E(X)$ admit infinitely many elliptic fibrations with infinite stabiliser by \cite[Theorem~5.1]{Nikulin}, and from the point of view of complex dynamics they exhibit an automorphism of \emph{positive entropy}. There are examples of such K3 surfaces with a non-zero exceptional lattice, such as the Kummer surfaces in Example \ref{ex:kummer}. 
No classification of the K3 surfaces with a non-zero, negative definite exceptional lattice is known at the moment.
 \end{enumerate}
\end{remark}

\section{Hilbert Property and elliptic fibrations}\label{sec:demeio}
The following theorem is a generalisation of a result by Demeio \cite[Theorem~1.1]{Demeio}, with the original method going back to Corvaja and Zannier \cite[Theorem~1.6]{CorvajaZannier}. 
\begin{theorem}\label{thm:demeio}
  Let $X$ be a smooth, geometrically integral, projective surface over a finitely generated field $K$ of characteristic $0$ with pairwise distinct elliptic fibrations \[\pi_i:X\to\BP^1_K,\, i=1,\dots,n\quad (n\geq 2).\] Let $Z\subset X_K$ be the union of all prime divisors on $X_K$ which are, for each $i=1,\dots,n$, contained in a fibre of $\pi_i$.
  
  If $\piet((X\setminus Z)_{\ov K})\cong\piet(X_{\ov K})$ and $X(K)$ is Zariski-dense in $X$, then $X$ has the weak Hilbert Property.
\end{theorem}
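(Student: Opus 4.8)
The plan is to verify the weak Hilbert Property directly: for an arbitrary finite family of ramified covers $\phi_i\colon Y_i\to X$ ($i=1,\dots,r$) with $Y_i$ integral and normal and $\deg\phi_i>1$, I would produce a Zariski-dense set of points in $X(K)$ avoiding all the images $\phi_i(Y_i(K))$. After replacing each $\phi_i$ by its Galois closure and enlarging the family, the strategy is to exploit the fibrations $\pi_i$ fibrewise, reducing the statement to the weak Hilbert Property of the (elliptic) fibres together with the Hilbert Property of the base $\BP^1$; the hypothesis on $\piet$ enters precisely to control where the covers can ramify, in the spirit of \cite{CorvajaZannier} and \cite{Demeio}.

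The first and decisive step is to show that each $\phi_i$ is ramified over the generic fibre of at least one $\pi_j$. Since $X$ is smooth and $Y_i$ normal, purity of the branch locus (Zariski--Nagata) shows that the branch divisor $B_i\subset X$ is pure of codimension one. Were every component of $B_i$ contained in $Z$, the cover $\phi_i$ would be étale over $X\setminus Z$; base-changing to $\ov K$ and using $\piet((X\setminus Z)_{\ov K})\cong\piet(X_{\ov K})$ together with the normality of $Y_i$ (Zariski's main theorem), the associated finite étale cover of $(X\setminus Z)_{\ov K}$ would extend to a finite étale cover of $X_{\ov K}$, forcing $\phi_i$ to be étale and contradicting ramification. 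Hence some component $C_i$ of $B_i$ is not contained in $Z$, so by definition of $Z$ it dominates $\BP^1$ under some $\pi_{j(i)}$, i.e.\ $\phi_i$ ramifies over the generic fibre of $\pi_{j(i)}$.

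Next I would fix one fibration $\pi=\pi_1$ admitting, by the density of $X(K)$, a Zariski-dense set $T\subseteq\BP^1(K)$ of fibres $E_t=\pi^{-1}(t)$ with $E_t(K)$ Zariski-dense (hence of positive rank), and analyse each restriction $\phi_i^{-1}(E_t)\to E_t$. If $\phi_i$ ramifies over the generic fibre of $\pi$, then for general $t$ this restriction is a ramified cover of the elliptic curve $E_t$, and the weak Hilbert Property of $E_t$ --- valid since $E_t(K)$ is dense, by the abelian case \cite{CDJLZ} --- leaves a Zariski-dense set of points of $E_t(K)$ off $\phi_i(Y_i(K))$. If instead $\phi_i$ is étale over the generic fibre of $\pi$, then $\phi_i^{-1}(E_t)\to E_t$ is an étale cover, a disjoint union of isogeny-translates; since $Y_i$ is integral it admits no section over the generic fibre, and by the Lang--Néron theorem each $E_t(K)$ is finitely generated, so for all $t$ outside a non-dense subset of $\BP^1(K)$ the set $\phi_i(Y_i(K))\cap E_t(K)$ lies in a finite union of cosets of proper subgroups of finite index in $E_t(K)$. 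That the exceptional $t$ (where a whole fibre is covered) form a non-dense set follows from the Hilbert Property of $\BP^1$ and the Chevalley--Weil theorem applied to these covers, which ramify over the base thanks to the first step.

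Finally, shrinking $T$ to avoid the finitely many non-dense bad subsets of $\BP^1(K)$, for each $t\in T$ the set $E_t(K)$ minus the finite union of the sets $\phi_i(Y_i(K))\cap E_t(K)$ is still Zariski-dense in $E_t$: each summand is removed either by the weak Hilbert Property of $E_t$, or is contained in finitely many proper cosets of finite-index subgroups, and a finitely generated group is not a finite union of proper cosets (Neumann), while any finite-index subgroup is itself Zariski-dense in $E_t$ since $E_t$ has no proper finite-index closed subgroup. Letting $t$ range over $T$ then yields a Zariski-dense set of good points in $X(K)$. The crux --- and the step I expect to be hardest to make rigorous --- is this combination: cleanly splitting the covers that are étale over a chosen fibre into an ``isogeny part'' (defeated by Mordell--Weil finiteness) and a ``base part'' (defeated by the Hilbert Property of $\BP^1$), and checking that the presence of $n\ge 2$ fibrations, via the smallness of $Z$ and the $\piet$-hypothesis, is exactly what guarantees these base parts are ramified so that the Hilbert Property of $\BP^1$ applies. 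A secondary difficulty is extracting the dense set $T$ of positive-rank fibres from the bare density of $X(K)$.
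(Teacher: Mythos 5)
Your proposal contains a genuine gap, and it sits exactly where you yourself predicted the difficulty would be: the covers that are étale over the generic fibre of the chosen fibration $\pi=\pi_1$. For such a cover, on a good fibre $E_t$ the set $\phi_i(Y_i(K))\cap E_t(K)$ is a finite union of cosets of subgroups of \emph{finite} index in $E_t(K)$: the image of an isogeny $\psi\colon E'\to E_t$ contains $[\deg\psi]E_t(K)$ (compose with the dual isogeny), so by Lang--N\'eron these subgroups have finite index. Your appeal to Neumann's lemma to conclude that the complement of these cosets is Zariski-dense is a misquotation of that lemma: Neumann's lemma forbids covering a group by finitely many cosets of \emph{infinite-index} subgroups, whereas a finitely generated abelian group certainly can be a finite union of cosets of proper finite-index subgroups (already $\ZZ=2\ZZ\cup(1+2\ZZ)$). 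Nothing you have established prevents the bad cosets from exhausting $E_t(K)$, so the final step fails. This is precisely the difficulty that forces Corvaja--Zannier and Demeio to use the second fibration \emph{inside the point construction}, not merely (as in your sketch) to guarantee that ``base parts'' ramify: one first produces on each fibre $E_{2,s}$ of $\pi_2$ a seed point known to avoid the covers ramified over fibres of $\pi_1$ (the seed comes from a good fibre of $\pi_1$, where Faltings gives finiteness), and then translates it by $N\cdot E_{2,s}(K)$ for a uniform integer $N$ (uniformity via Merel, resp.\ its extension to finitely generated fields); since each bad set is a union of cosets of subgroups containing $N E_{2,s}(K)$ and the seed lies outside all of them, the entire translated coset is disjoint from them. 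Your ``secondary difficulty'' (a dense set of positive-rank fibres) is resolved by the same interleaving plus the uniform torsion bound, and is likewise not addressed in your sketch.

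For comparison, the paper does not reprove this fibration argument at all: it invokes the proof of \cite[Theorem~1.1]{Demeio} verbatim (remarking only that Faltings and Merel-type inputs persist over finitely generated fields of characteristic $0$), thereby reducing to covers that are étale outside $Z$. Its only new ingredient is the step corresponding to your first paragraph, and there the paper is more careful on a point you elide: the hypothesis is an \emph{abstract} isomorphism $\piet((X\setminus Z)_{\ov K})\cong\piet(X_{\ov K})$, not that the map induced by inclusion is an isomorphism. One must first prove surjectivity of the induced map (Seifert--van Kampen), then use that a topologically finitely generated profinite group is Hopfian to upgrade the abstract isomorphism to an isomorphism along $i_*$, and finally a five-lemma argument on the homotopy exact sequences to get the statement over $K$; only then do étale covers of $X\setminus Z$ extend to $X$, so that a ramified cover étale outside $Z$ cannot exist. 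Your first step is thus essentially right in spirit but incomplete, while the subsequent fibrewise argument is where the proposal genuinely breaks down.
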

\begin{proof}
 The first step of the proof is as in \cite[Theorem~1.1]{Demeio}. Concluding verbatim, one only needs to consider non-\'etale covers $\phi_i:Y_i\to X$ that are \'etale outside $Z$. (While the argument is stated for number fields, Faltings's Theorem applies to finitely generated fields of characteristic $0$. The proof of \cite[Theorem~3.2]{Demeio} relies on Merel's uniform torsion bound, which also generalises \cite[Footnote~1]{CadoretTamagawa} -- although the bound may not be explicit in this case.) It remains to be shown that every finite \'etale cover of $X\setminus Z$ can be extended to $X$.
 
  Fix an embedding $K\hookrightarrow \BC$. Recall that for a geometrically connected variety $\piet(X_{\ov K})\simeq\piet(X_\BC)$ \cite[Proposition~XIII.4.6]{SGA1}, and $\piet(X_\BC)$ is the profinite completion of the topological fundamental group $\pitop(X_\BC)$.
  
  Let $Z_1,\dots,Z_k$ be the connected components of $Z$. By the Seifert--van Kampen theorem, $\pitop(X_\BC)$ is the amalgamated product of $\pitop((X\setminus Z_1)_\BC)$ and $\pitop(Z_{1,\BC})$. Hence, $\pitop((X\setminus Z_1)_\BC)\to\pitop(X_\BC)$ is surjective. Continuing by induction, the inclusion $i:X\setminus Z\hookrightarrow X$ induces an epimorphism $\piet((X\setminus Z)_{\ov K})\twoheadrightarrow\piet(X_{\ov K})$. Because $\piet(X_{\ov K})$ as a topologically finitely generated profinite group is Hopfian \cite[Proposition~2.3]{Ribes}, it follows that this is an isomorphism of fundamental groups and we obtain the following commutative diagrams with exact rows \cite[Proposition~5.6.1]{Szamuely}:
 \[\begin{tikzcd}
    0 \arrow[r] & \piet((X\setminus Z)_{\ov K}) \arrow[r] \arrow[d,"\simeq"] & \piet(X\setminus Z) \arrow[d,"i_*"]\arrow[r]  & \Gal(\ov K/K) \arrow[r] \arrow[d,equal] & 0\\
    0 \arrow[r] & \piet(X_{\ov K}) \arrow[r] & \piet(X) \arrow[r] & \Gal(\ov K/K) \arrow[r] & 0.
   \end{tikzcd}
\]
 By the five lemma, the middle vertical arrow is an isomorphism. Thus, the categories of finite \'etale covers over $X$ and $X\setminus Z$ are equivalent.
\end{proof}

Note that, if $X$ is a minimal algebraic surface of Kodaira dimension $0$ as in \Cref{thm:demeio} over an algebraically closed field, the divisor $Z$ coincides with the union of the $(-2)$-curves in the over-exceptional lattice $E'(X)$, since any reducible fibre of a relatively minimal elliptic fibration is the union of smooth rational curves.

If one restricts to minimal surfaces, the applicability of \Cref{thm:demeio} is limited by the following generalisation of a well-known result which for example appears in \cite[Lemma~12.18]{SchuettShioda}:
\begin{proposition}
A minimal algebraic surface over an algebraically closed field with two distinct elliptic fibrations has Kodaira dimension $0$.
\end{proposition}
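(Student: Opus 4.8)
The plan is to manufacture, out of the two fibrations, a hyperbolic plane sitting inside $\Num(X)$ and orthogonal to $K_X$, and then to play the resulting Hodge-index inequality $K_X^2\le 0$ against the positivity of $K_X$ on minimal non-ruled surfaces. The only subtlety is that this lattice-theoretic bound cannot by itself distinguish Kodaira dimension $0$ from $-\infty$, so the real work is in excluding the uniruled case.

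First I would record the numerical input. Since $X$ is minimal it contains no $(-1)$-curve, so both fibrations $\pi_1,\pi_2$ are automatically relatively minimal, and their general fibres $F_1,F_2$ are smooth curves of genus $1$ with $F_i^2=0$; adjunction gives $K_X\cdot F_i=2p_a(F_i)-2-F_i^2=0$. Because the fibrations are distinct, a general fibre $F_1$ cannot be contracted by $\pi_2$: otherwise $\pi_2$ would be constant along the (connected) fibres of $\pi_1$ and hence factor as $\pi_2=h\circ\pi_1$, forcing the two fibrations to coincide after Stein factorisation. Thus $\pi_2|_{F_1}$ is finite and $F_1\cdot F_2=\deg(\pi_2|_{F_1})=:d\ge 1$. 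The classes $F_1,F_2$ therefore span a rank-$2$ sublattice $P\subset\Num(X)$ with Gram matrix $\begin{pmatrix}0&d\\d&0\end{pmatrix}$, of signature $(1,1)$. Next I would invoke the Hodge index theorem: $\Num(X)$ has signature $(1,\rho(X)-1)$, so the nondegenerate plane $P$ has negative definite orthogonal complement $P^\perp$. As $K_X$ is orthogonal to both $F_1$ and $F_2$ it lies in $P^\perp$, whence $K_X^2\le 0$, with equality if and only if $K_X$ is numerically trivial. In particular this already rules out $X$ being of general type.

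The remaining, and main, difficulty is separating $\kappa(X)=0$ from $\kappa(X)=-\infty$. Here I would use the classification of minimal surfaces of negative Kodaira dimension: such a surface is either $\BP^2$, which admits no nonconstant morphism to a curve and hence no fibration at all, or a geometrically ruled surface, which has $\rho(X)=2$ and whose ruling $\ell$ is an isotropic nef class with genus-$0$ fibres. In a rank-$2$ lattice of signature $(1,1)$ the nef cone has exactly two isotropic boundary rays, and every fibre class of a fibration lies on one of them; since the ruling already occupies one ray (with rational fibres), at most one elliptic fibration can exist. This contradicts the hypothesis of two distinct elliptic fibrations, so $\kappa(X)\ge 0$.

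Finally, a minimal surface with $\kappa\ge 0$ is not ruled, so $K_X$ is nef and $K_X^2\ge 0$; combined with the bound $K_X^2\le 0$ from the second step this forces $K_X^2=0$, and the equality case then yields that $K_X$ is numerically trivial. Since a minimal surface with numerically trivial canonical class has Kodaira dimension $0$, this completes the argument. I expect the exclusion of the $\kappa=-\infty$ case to be the genuine obstacle: it is the only step not covered by the uniform Hodge-index computation and instead relies on the structure theory (classification, $\rho=2$, nefness of $K_X$ for non-ruled minimal surfaces), so care is needed to phrase it cleanly and in a way valid over any algebraically closed field.
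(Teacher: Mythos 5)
Your proof is correct, but it follows a genuinely different route from the paper's. The paper's argument is uniform and avoids any case analysis on the Kodaira dimension: since $X$ is minimal, both fibrations are relatively minimal, so Kodaira's canonical bundle formula applies to each one and shows that $K_X$ is numerically a rational multiple of each fibre class $F_i$; as the two fibre classes intersect positively (the paper takes fibres through a common point, you prove $F_1.F_2\geq 1$ via Stein factorisation --- the same fact), intersecting the two proportionalities forces $K_X\equiv 0$, hence $K_X$ is torsion and $\kappa(X)=0$ in one stroke. You instead use adjunction and the Hodge index theorem to place $K_X$ in the negative definite orthogonal complement of the hyperbolic plane $\langle F_1,F_2\rangle$, which kills general type, and then must do separate work to exclude $\kappa(X)=-\infty$: this is where you invoke the Enriques classification ($\BP^2$ or geometrically ruled), the fact that $\Num$ of a geometrically ruled surface has rank $2$, and the two-ray structure of the nef cone, before finishing with nefness of $K_X$ for minimal non-ruled surfaces and the equality case of Hodge index. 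What each approach buys: the paper's proof is shorter and handles all Kodaira dimensions simultaneously, at the price of using the canonical bundle formula (whose validity in positive characteristic rests on Bombieri--Mumford); your proof avoids the canonical bundle formula entirely, using only adjunction, signature considerations, and structure theory, but the structure facts you rely on (classification of minimal surfaces with $\kappa=-\infty$, nefness of $K_X$ when $\kappa\geq 0$) are themselves classification-level inputs, so the argument is not really more elementary, only differently organised --- your ruled-surface exclusion is precisely the case that the canonical bundle formula dispatches invisibly in the paper.
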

\begin{proof}
 Let $X$ be such a surface with two distinct elliptic fibrations $\pi_1$, $\pi_2$. Let $F_1$, $F_2$ be smooth, non-multiple fibres of $\pi_1$, $\pi_2$ passing through a point $P\in X$. For $i=1,2$, $K_X$ is linearly equivalent to a sum of fibres of $\pi_i$ by Kodaira's canonical bundle formula \cite[Theorem~V.12.1]{BarthHulek} for relatively minimal elliptic fibrations. Since all fibres of an elliptic fibration become numerically equivalent after taking sufficiently high multiples, there exist integers $a,b_1,b_2\geq0$ such that $aK_X=b_1F_1=b_2F_2$ in $\Num(X)$. If $b_1$ or $b_2$ is $0$, then $K_X$ is torsion. Otherwise, $F_1F_2=\frac{a^2}{b_1b_2}K_X^2=0$, but this is impossible since the curves $F_1$ and $F_2$ intersect in $P$.
\end{proof}

\begin{remark}
Since the weak Hilbert Property is a birational invariant of smooth, proper varieties (see \cite[Proposition~3.1]{CDJLZ}), it would be interesting to see whether it is possible to apply \Cref{thm:demeio} after purposefully blowing up a minimal surface.
\end{remark}

We now complete the proofs of Theorems \ref{thm:enriques} and \ref{thm:K3}.

\begin{proof}[Proof of Theorem A]
Up to passing to a finite field extension of $K$, we may assume that $\Pic(S_K)=\Pic(S_{\ov K})$ and by \cite{BogomolovTschinkel} that $S(K)$ is Zariski-dense in $S$.

Let $C_1,\ldots,C_n$ be the curves in the over-exceptional lattice $E'(S)$, and denote by $Z$ their union. By \Cref{thm:demeio} it is sufficient to show that $\pitop((S\setminus Z)_{\BC})\simeq\ZZ/2$ and thus $\piet((S\setminus Z)_{\ov K})\simeq\ZZ/2$ (the étale fundamental group being the profinite completion of the topological fundamental group as in the proof of \Cref{thm:demeio}).

Let $X$ be the K3 cover of $S$. Since the curves $C_i$ are smooth and rational, the pullback of $C_i$ on $X$ consists of two disjoint $(-2)$-curves $C_i'$ and $C_i''$. Denote by $Z'\subseteq X$ the pullback of $Z$.
Since the restriction $X\setminus Z'\to S\setminus Z$ is an \'etale double cover, it is sufficient to show that $\pitop((X\setminus Z')_{\ov K})=\{1\}$.

This follows from \cite[Theorem~4.3]{ShimadaZhang}. Indeed the curves in $Z'$ span the root lattice $E'(S)^{\oplus 2}\hookrightarrow \Pic(X)$, and we have shown in \Cref{thm:overexceptional_Enriques} that this embedding is primitive, and that $\rk{E'(S)^{\oplus 2}}\le 10$. 
\end{proof}

\begin{proof}[Proof of Theorem B]
Up to passing to a finite field extension of $K$, we may assume that $\Pic(X_K)=\Pic(X_{\ov K})$ and, since $X$ admits an elliptic fibration, that $X(K)$ is Zariski-dense in $X$ by \cite{BogomolovTschinkel_2}.

Let $\CS$ be the finite list of lattices with non-zero over-exceptional lattice as in \Cref{rk:finitelist}(2).
We claim that any K3 surface $X$ with Picard rank $\ge 6$ and $\Pic(X)$ not in $\CS$ admits at least two distinct elliptic fibrations. Indeed every K3 surface of Picard rank $\ge 6$ is elliptic by \cite[Proposition~11.1.3(ii)]{Huybrechts}, and if $X$ admits only one elliptic fibration $|F|$, then by definition $E'(X)$ contains the elliptic curve $F$, hence $E'(X)\ne 0$ and in particular $\Pic(X)$ belongs to the finite list $\CS$.

Let $C_1,\ldots,C_n$ be the $(-2)$-curves in the over-exceptional lattice $E'(X)$, and denote by $Z$ their union. By \Cref{thm:demeio} it is sufficient to show that $X\setminus Z$ is simply connected.

If $X$ satisfies assumption (2), then the simply connectedness is obvious since $E'(X)=0$ by assumption, and hence $Z=\varnothing$. Therefore assume that $X$ satisfies assumption (1), i.e.\ that it admits two distinct elliptic fibrations $|F_1|$ and $|F_2|$, and that $\rho(X)<10$.
By definition the curves in the over-exceptional lattice are orthogonal to $F_1$ and $F_2$, so we have an embedding $E'(X)\hookrightarrow \langle F_1,F_2\rangle^\perp$. By assumption this implies that the rank of $E'(X)$ is strictly less than $8$.
We claim that the embedding $E'(X)\hookrightarrow \Pic(X)$ is primitive. If this were not the case, by \Cref{prop:primitive} the root lattice $E'(X)$ would admit an overlattice that is not a root lattice, contradicting \Cref{prop:overlattice_ge8}.
Therefore the simply connectedness of the complement $X\setminus Z$ follows from \cite[Theorem~4.3]{ShimadaZhang}.
\end{proof}

The following theorem shows that the Picard lattices of K3 surfaces with an Enriques involution are not contained in the list $\CS$.
\begin{theorem}\label{thm:enriques_cover}
 Every K3 surface $X$ over a finitely generated field $K$ of characteristic $0$ with an Enriques involution has the potential Hilbert property.
 
 More precisely, if $X(K)$ is Zariski-dense in $X$ and $\Pic(X)=\Pic(X_{\ov K})$, then $X$ has the Hilbert Property.
\end{theorem}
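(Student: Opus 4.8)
The plan is to mirror the strategy used for Theorems~\ref{thm:enriques} and \ref{thm:K3}, applying \Cref{thm:demeio} directly to $X$ and then upgrading the resulting weak Hilbert Property to the full Hilbert Property using that a K3 surface is geometrically simply connected. For the first (``potential'') assertion I would reduce to the second: since $X$ covers an Enriques surface, the pullback $\pi^*\colon\Num(S_{\ov K})\hookrightarrow\Pic(X_{\ov K})$ is injective, so $\rho(X_{\ov K})\geq 10$ and in particular $X$ is elliptic by \cite[Proposition~11.1.3(ii)]{Huybrechts}. Hence after a finite extension $K'/K$ we may assume by \cite[Theorem~1.1]{BogomolovTschinkel_2} that $X(K')$ is Zariski-dense and that $\Pic(X_{K'})=\Pic(X_{\ov K})$, and it suffices to prove the ``more precise'' statement over $K'$.

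So assume $X(K)$ is Zariski-dense and $\Pic(X)=\Pic(X_{\ov K})$. First I would record that $X$ carries at least two distinct elliptic fibrations defined over $K$: the Enriques quotient $S=X_{\ov K}/\iota$ admits half-fibres forming a $4$-sequence by \Cref{thm:four}, and pulling back two non-proportional pencils $|2F_1|,|2F_2|$ gives two distinct elliptic fibrations on $X_{\ov K}$; as $\Pic(X)=\Pic(X_{\ov K})$ is Galois-invariant, their classes, and hence the fibrations themselves, descend to $K$. Let $Z$ be the union of the $(-2)$-curves in the over-exceptional lattice $E'(X)$, which by the remark following \Cref{thm:demeio} coincides with the union of prime divisors vertical for every elliptic fibration and is defined over $K$ since these $(-2)$-curves are rigid with Galois-fixed classes. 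By \Cref{cor:overexceptional_K3}, which applies precisely because $X$ covers an Enriques surface, the lattice $E'(X)$ has rank $\leq 10$ and embeds primitively into $\Pic(X)$, so \cite[Theorem~4.3]{ShimadaZhang} shows that $X\setminus Z$ is simply connected, i.e.\ $\piet((X\setminus Z)_{\ov K})\cong\{1\}\cong\piet(X_{\ov K})$. \Cref{thm:demeio} then yields that $X$ has the weak Hilbert Property.

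It remains to upgrade this to the Hilbert Property, which is exactly where $X$ being a K3 surface (rather than the Enriques surface $S$ of \Cref{thm:enriques}) is decisive. Given a finite family of finite covers $\phi_i\colon Y_i\to X$ of degree $>1$ with $Y_i$ integral and normal, I would split it into the ramified covers, which are handled by the weak Hilbert Property just established, and the \'etale ones. Since $\piet(X_{\ov K})=\{1\}$, every connected finite \'etale cover of $X$ is of the constant form $X_L\to X$ for a finite field extension $L/K$ with $[L:K]=\deg\phi_i>1$; such a cover has no $K$-rational point on its source, so $\phi_i(Y_i(K))=\varnothing$ and it contributes nothing to the excluded set. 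Hence the two notions coincide on $X$ and we conclude that $X$ has the Hilbert Property.

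The substance of the argument has in fact already been carried out in \Cref{thm:overexceptional_Enriques} and its consequence \Cref{cor:overexceptional_K3}: without the rank bound and the primitivity of $E'(X)$ one could not invoke \cite[Theorem~4.3]{ShimadaZhang}, and indeed both can fail for K3 surfaces \emph{not} covering an Enriques surface, as \Cref{rk:finitelist}(1) shows. The only genuinely new point is therefore the arithmetic check that the ``constant'' \'etale covers do not obstruct density. As a cross-check, I would note the alternative route to the potential statement that bypasses \Cref{thm:demeio} altogether: combining \Cref{thm:enriques} with the stability of the potential weak Hilbert Property under the \'etale cover $X\to S$ \cite[Theorem~3.16]{CDJLZ} gives the potential weak Hilbert Property for $X$, which the same simple-connectedness argument promotes to the potential Hilbert Property.
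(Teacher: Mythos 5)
Your proposal is correct and follows essentially the same route as the paper's proof: reduce to the precise statement via \cite{BogomolovTschinkel_2}, take $Z$ to be the union of the $(-2)$-curves in $E'(X)$, and combine \Cref{cor:overexceptional_K3} with \cite[Theorem~4.3]{ShimadaZhang} and \Cref{thm:demeio}. The extra steps you make explicit --- descending two distinct elliptic fibrations to $K$ via $\Pic(X)=\Pic(X_{\ov K})$, and upgrading the weak Hilbert Property to the Hilbert Property using that $\piet(X_{\ov K})$ is trivial so \'etale covers contribute no rational points --- are exactly the points the paper leaves implicit, so they constitute a faithful filling-in rather than a different argument.
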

\begin{proof}
Let $S$ be any Enriques surface covered by $X$. Up to passing to a finite field extension of $K$, we may assume that $\Pic(X_K)=\Pic(X_{\ov K})$ and, since $S$ (and thus $X$) admits an elliptic fibration, that $X(K)$ is Zariski-dense in $X$ by \cite{BogomolovTschinkel_2}.

Denote by $Z$ the union of the $(-2)$-curves in the over-exceptional lattice $E'(X)$ of $X$. By \Cref{thm:demeio} it is sufficient to show that $X\setminus Z$ is simply connected. This follows from \cite[Theorem~4.3]{ShimadaZhang}, since by \Cref{cor:overexceptional_K3} the embedding $E'(X)\hookrightarrow \Pic(X)$ is primitive and $\rk{E'(X)}\le 10$.
\end{proof}

\begin{remark}\label{rem:uniform}
If $V$ is an Enriques surface or a K3 surface as in Theorem B, with an elliptic fibration over $K$, then the field extension $K'/K$ over which $V$ acquires the Hilbert Property can be uniformly bounded. In fact, it suffices to choose $K'$ such that $V(K')$ is Zariski-dense in $V$ and $\Num(V_{K'})=\Num(V_{\ov K})$.

Let $c_{n}$ be the largest possible cardinality of a finite subgroup of $\GL(n,\BZ)$ and set $C=4608c_{20}$. We use \cite[Lemma~4.4]{LaiNakahara} and \cite[Theorem~4.14]{LaiNakahara}, which are formulated for $K$ a number field, but all the proof steps work for finitely generated field of characteristic $0$. (A version of Merel's uniform torsion bound for higher-dimensional fields is explained in \cite[Footnote~1]{CadoretTamagawa}. Like in the number field case, this torsion bound depends on $K$ but its value does not affect $C$.)

It follows that $K'$ can be chosen such that $[K':K]\leq C$. In the situation of Theorem~A, where $V=S$ is an Enriques surface, the methods of \cite{LaiNakahara} and \cite{BogomolovTschinkel} can be used to improve this bound and replace $c_{20}$ by $c_{10}$. For explicit values of $c_n$, the reader may consult \cite{cn}.
\end{remark}

\printbibliography

\end{document}